\documentclass[11pt]{article}
\usepackage[margin=1in]{geometry}
\usepackage[british]{babel}
\usepackage{amsmath,amssymb,amsthm}
\usepackage{tikz-cd}
\usepackage{tikz}
\usetikzlibrary{positioning,arrows.meta,calc}
\usepackage{pdflscape}
\usepackage{adjustbox}
\usepackage{cite}
\usepackage[colorlinks=true,allcolors=black]{hyperref}
\usepackage{xcolor}
\usepackage{enumitem}

\newtheorem{assumption}{Assumption}[section]
\newtheorem{theorem}[assumption]{Theorem}
\newtheorem{proposition}[assumption]{Proposition}
\newtheorem{lemma}[assumption]{Lemma}
\newtheorem{corollary}[assumption]{Corollary}
\theoremstyle{definition}
\newtheorem{definition}[assumption]{Definition}
\newtheorem{example}[assumption]{Example}
\theoremstyle{remark}
\newtheorem{remark}[assumption]{Remark}
\newtheorem{conjecture}[assumption]{Conjecture}

\newcommand{\dd}{\,\mathrm{d}}
\newcommand{\I}{[0,1]}
\newcommand{\Linfty}{L^{\infty}}
\newcommand{\eps}{\varepsilon}
\newcommand{\Mdh}{ {M_{DH}}}
\newcommand{\MdTwoH}{ {M_{D^2H}}}
\newcommand{\norm}[1]{\left\lVert #1 \right\rVert}
\newcommand{\R}{\mathbb{R}}
\newcommand{\DD}{\mathrm{D}}

\numberwithin{equation}{section}

\title{Compatibility of Higher-Order Slow-Manifold Reduction and Continuum Limits in Adaptive Networks}
\author{%
Christian Kuehn$^{1,2,3}$, Fergal Murphy$^{1}$\thanks{Email: \texttt{fergal.murphy@tum.de}}, and Jan-Eric Sulzbach$^{1,4}$\\[4pt]
\small $^{1}$Technical University of Munich, Department of Mathematics, Munich, Germany\\
\small $^{2}$Munich Data Science Institute (MDSI), Munich, Germany\\
\small $^{3}$Complexity Science Hub Vienna (CSH), Vienna, Austria\\
\small $^{4}$Leiden University, Mathematical Institute, Leiden, Netherlands}
\date{\today}

\begin{document}
\maketitle

\begin{abstract}
Adaptive networks couple the evolution of node states to the evolution of the interactions between them. In fast-adapting phase oscillator networks, a slow-manifold reduction of a pairwise microscopic model can generate effective higher-order terms in the phase dynamics. We ask whether this higher-order structure survives the dense-graph continuum limit, and whether it matters if one first reduces and then passes to the continuum, or first passes to the continuum and then reduces. We prove well-posedness and discrete-to-continuum convergence for the unreduced and first-order reduced models, and we construct the continuum slow manifold directly in a Banach-space setting. Along admissible equal-cell step approximations, the two routes give the same first-order continuum vector field, including the same pairwise correction and triplet operator, up to controlled \(O(\varepsilon^2)\) remainders. A continuum mixed-derivative criterion then shows that, for suitable coupling functions, the resulting triplet operator is genuinely nonpairwise in the smooth bounded-kernel class. Thus the higher-order term is not a finite-network artefact, but persists in the macroscopic continuum description considered here.
\end{abstract}

\noindent\textbf{Keywords:} adaptive networks; continuum limits; slow-manifold reduction; geometric singular perturbation theory; higher-order interactions.

\noindent\textbf{2020 Mathematics Subject Classification:} 34E15; 34C15; 37L05; 05C82; 45J05.

\setcounter{tocdepth}{2}
\tableofcontents

\section{Introduction}\label{sec:introduction}

\subsection*{Background and motivation}

 Adaptive network models describe systems in which the state of each node and the
strength of its interactions evolve together. They provide a natural language for many complex systems posed on networks, especially when the network couplings
themselves respond to the evolving state of the network~\cite{GrossBlasius2008,Berneretal,
Kuramoto1984,Strogatz2000}. In many such models the adaptation is fast relative to the node
dynamics, so the problem sits naturally in the fast--slow framework of geometric singular
perturbation theory~\cite{Fenichel1979,Jones1995,Kuehn2015}. From this point of view,
slow-manifold reduction is attractive not only because it lowers dimension, but because it reveals
which effective interactions are actually seen on the slow time scale.

 At the same time, higher-order interactions have become a central theme in network
dynamics~\cite{Battiston2020PhysRep,Boccalettietal1,BickAshwinRodrigues2016Chaos,SkardalArenas1}.
In many models such terms are introduced phenomenologically, for example through simplicial or
hypergraph couplings. Here the perspective is different: the microscopic system is pairwise, and
the question is whether genuinely nonpairwise terms can emerge as an effective description after
reduction. Adaptive fast--slow networks are therefore a natural setting in which to ask how
higher-order collective dynamics can arise from pairwise mechanisms.

\subsection*{The finite-dimensional result}

The recent finite-dimensional work~\cite{KuehnMurphy} gives a rigorous mechanism for
this emergence. The starting point is the adaptive phase oscillator network
\begin{equation}\label{eq:intro-full}
\dot{\theta}_i = \omega_i + \frac{1}{N}\sum_{j=1}^N a_{ij}\,\Gamma(\theta_j-\theta_i),
\qquad
\varepsilon\dot{a}_{ij} = -a_{ij} + H(\theta_i,\theta_j),
\end{equation}
 where \(\theta_i\in\mathbb{T}=\mathbb{R}/(2\pi\mathbb{Z})\) are phases and \(a_{ij}\in\mathbb{R}\) are
adaptive weights. Throughout the paper, however, the analysis is carried out on chosen real lifts
of these torus phases. Accordingly, \(\Gamma\) is treated as a \(2\pi\)-periodic function on
\(\mathbb{R}\), and \(H\) as a function on \(\mathbb{R}^2\) that is \(2\pi\)-periodic in each
argument, so all expressions below are independent of the chosen lifts. In the fast-adaptation regime the critical manifold
\[
S_0^N
:=
\{(\theta,a)\in\mathbb T^N\times\mathbb R^{N\times N}: a_{ij}=H(\theta_i,\theta_j)\text{ for all }i,j\}
\]
is a global graph over the phase variables and is globally normally hyperbolic because the fast linearisation is
simply \(-I\). Fenichel theory therefore produces a nearby slow manifold, and substituting its
graph into the phase equation yields a first-order reduced phase-only expansion of the form
\begin{equation}\label{eq:intro-reduced}
\dot{\theta}_i
=
\omega_i
+ \frac{1}{N}\sum_{j=1}^N H(\theta_i,\theta_j)\Gamma(\theta_j-\theta_i)
+ \frac{\varepsilon}{N}\sum_{j=1}^N P_{ij}(\theta;\omega)
+ \frac{\varepsilon}{N^2}\sum_{j,k=1}^N T_{ijk}(\theta)
+ O(\varepsilon^2).
\end{equation}
 The important point is not just the detailed formula of the correction terms, but
their structure: besides a first-order pairwise correction \(P_{ij}(\theta;\omega)\), the reduced vector field
contains a genuine triplet term \(T_{ijk}\). A mixed-derivative criterion in~\cite{KuehnMurphy}
shows that this term cannot, in general, be absorbed into a pairwise representation; this is
verified explicitly for the adaptive Kuramoto model. Thus slow-manifold reduction provides a
rigorous route from a pairwise adaptive system to an effective higher-order phase model.

\subsection*{The continuum-limit question}

 The result of~\cite{KuehnMurphy} is, however, a statement for fixed \(N\). Large
oscillator systems are often analysed through continuum or mean-field type limits. For the case of dense graphs, key tools are graph limits expressed via graphons. In this framework, the
discrete index \(i\in\{1,\dots,N\}\) is replaced by a continuous label \(x\in[0,1]\) and sums
become integral operators~\cite{Neunzert1984,Lancellotti2005,Medvedev2014,ChibaMediano2019,
KaliuzhnyiVerbovetskyiMedvedev2017,AyiDuteil1}. The closest precursors of the present construction are the continuum-limit results for adaptive Kuramoto-type and adaptive interacting-particle systems developed in~\cite{GkogkasKuehnXu2023,Throm2024EJAM,CestnikMartens2025}; the present paper differs in that its central question is compatibility with Fenichel reduction in the fast-adaptation regime, and in that it works directly with a dense-graph equal-cell \(L^\infty\) step embedding rather than with general graphon, graphop, or measure-valued limits. Thus the relation to graphon theory here is structural rather than topological: the \(1/N\)-scaling and the passage from sums to integral operators are those of dense graphon-type limits, but the present analysis keeps fixed labels and requires strong \(L^\infty\)-convergence of the chosen step representatives. We use this strong \(L^\infty\) equal-cell framework deliberately, because both the Lyapunov--Perron slow-manifold construction and the nonpairwise mixed-derivative certificate are naturally formulated in sup-norm spaces. This raises a structural question that is natural from both the network and continuum points of view:
\begin{quote}
\emph{Does the emergent higher-order structure survive in the continuum limit, and
does the order in which one performs reduction and continuum limit matter?}
\end{quote}

 More precisely, two operations act on the finite-\(N\) adaptive fast--slow
system~\eqref{eq:intro-full}:
\begin{itemize}
  \item \(\mathrm{Red}_\varepsilon\): slow-manifold (Fenichel) reduction, eliminating the fast
  coupling weights and producing a phase-only system with higher-order corrections;
  \item \(\mathrm{CL}\): the dense-graph continuum limit \(N\to\infty\), taken along equal-cell
  step embeddings with compatible \(L^\infty\)-convergent data and replacing normalised sums by
  integral operators over \([0,1]\).
\end{itemize}
 These can be compared in either order, giving rise to the following
schematic comparison diagram:
\[
\begin{tikzcd}[column sep=4.2em, row sep=3.6em]
\text{Finite adaptive fast--slow system}
  \arrow[r, "\mathrm{Red}_{\varepsilon}"]
  \arrow[d, "\mathrm{CL}"']
&
\text{Finite reduced higher-order system}
  \arrow[d, "\mathrm{CL}"] \\
\text{Continuum fast--slow equation}
  \arrow[r, "\mathrm{Red}^{\infty}_{\varepsilon}"']
&
\text{Reduced continuum higher-order equation}
\end{tikzcd}
\]
 Each arrow requires its own analytical justification: the horizontal arrows are
exact slow-manifold reductions, finite-dimensional and infinite-dimensional respectively, while
the vertical arrows are dense-graph macroscopic continuum limits. The order restriction enters only
when one seeks an explicit asymptotic description of the reduced dynamics: the first genuinely
nonpairwise contribution appears at order \(\varepsilon\), so the structural question addressed here
is whether the two exact routes induce the same reduced vector field through the first non-trivial
order. Schematically, after dense-label embedding and along admissible approximating sequences, we ask whether
\begin{equation}\label{eq:intro-commutation}
\mathrm{CL}\circ\mathrm{Red}_{\varepsilon}
\;\stackrel{?}{=}\;
\mathrm{Red}^{\infty}_{\varepsilon}\circ\mathrm{CL}
{
\qquad\text{through the first non-trivial order, with }O(\varepsilon^2)\text{ remainders}.}
\end{equation}
Thus the manifold constructions themselves are not truncated objects; the truncation concerns only
the explicit vector-field expansion used to isolate the first emergent higher-order term.

 This is not a purely formal question. The vertical arrows probe whether the
large-network limit preserves the adaptive fast--slow structure, while the lower horizontal arrow
requires an infinite-dimensional slow-manifold reduction in a Banach-space setting, connecting the
problem to recent work on continuum fast--slow systems and infinite-dimensional slow
manifolds~\cite{Hummel-Kuehn,Kuehn-Sulzbach,kuehn2024infinite}. If this compatibility holds, then
the first-order nonpairwise continuum interaction is a feature of the scoped
continuum limit considered here rather than an artefact of choosing one analytical route before
the other within the stated first-order, step-field framework.

In this paper we justify both routes in the diagram rigorously and identify the induced reduced
vector fields through the first non-trivial order. We first establish continuum well-posedness for
both the unreduced adaptive fast--slow system and the reduced higher-order phase dynamics, 
and discrete-to-continuum convergence for data admitting compatible
\(L^\infty\)-convergent equal-cell step approximations. We then carry out the
continuum-level slow-manifold reduction and identify the first-order reduced vector field directly
in the Banach-space setting. The reduce-first and continuum-first routes produce the same
leading-order term and the same pairwise and triplet corrections, with \(O(\varepsilon^2)\)
remainders along admissible step-field embeddings. When the triplet operator satisfies the
continuum mixed-derivative criterion proved later, this gives a rigorous mechanism by which
genuinely nonpairwise continuum interactions arise from pairwise adaptive dynamics.

The precise compatibility statement is Theorem~\ref{thm:sec5-commutation}: along admissible
equal-cell step embeddings, the reduced vector field obtained from the Lyapunov--Perron selected
finite-\(N\) representative and the continuum-first reduced vector field agree through first order
in \(\varepsilon\), up to explicitly controlled \(O(\varepsilon^2)\) remainders. Thus the theorem is
a first-order vector-field comparison: the manifold constructions themselves are exact, while the
common asymptotic description is identified only through order \(\varepsilon\) to focus on the key effect that higher-order interactions are generated in the slow flow.

\begin{figure}[h]
\centering
\begin{adjustbox}{max width=\textwidth}
\begin{tikzpicture}[
    >=Latex,
    line width=0.8pt,
    node distance=2.8cm and 3.8cm,
    box/.style={
        draw,
        rounded corners,
        align=center,
        text width=3.6cm,
        minimum height=1.35cm,
        inner sep=4pt
    },
    widebox/.style={
        draw,
        rounded corners,
        dashed,
        align=center,
        text width=4.1cm,
        minimum height=1.45cm,
        inner sep=4pt
    },
    smallbox/.style={
        draw,
        rounded corners,
        align=center,
        text width=3.1cm,
        minimum height=1.25cm,
        inner sep=4pt
    },
    lab/.style={
        font=\scriptsize,
        align=center,
        fill=white,
        inner sep=1.5pt,
        text opacity=1
    }
]

\node[box] (FN) {%
Finite unreduced\\
fast--slow network\\[1mm]
\((\theta^N,a^N)\)
};

\node[box, right=of FN] (FR) {%
Finite reduced\\
LP-selected graph\\[1mm]
\(h^N_\varepsilon\)
};

\node[box, below=of FN] (CN) {%
Continuum limit\\
fast--slow equation\\[1mm]
\((\theta,a)\in X_\theta\times X_a\)
};

\node[box, right=of CN] (CR) {%
Continuum-first\\
LP-selected graph\\[1mm]
\(h_\varepsilon\)
};

\node[widebox, right=of CR] (TR) {%
Common first-order\\
reduced vector field\\[1mm]
\(
\omega + K + \varepsilon P + \varepsilon T
\)
};

\node[smallbox, below=2.3cm of TR] (NP) {%
Continuum\\
nonpairwise test\\[1mm]
for \(T\)
};

\draw[->] (FN) -- (FR)
    node[midway, above, lab] {finite-dimensional\\ reduction\\ \cite{KuehnMurphy}};

\draw[->] (FN) -- (CN)
    node[midway, left, lab] {fixed-\(\varepsilon\)\\ continuum limit\\ Thm.~\ref{thm:sec2-conv}};

\draw[->] (CN) -- (CR)
    node[midway, above, lab] {Lyapunov--Perron\\ construction\\ Lem.~\ref{lem:lyapunov_perron}};

\draw[->, dashed] (FR) -- (TR)
    node[midway, above, lab] {step-field identification\\ + \(O(\varepsilon^2)\) transfer\\ Thm.~\ref{thm:sec3-conv}};

\draw[->, dashed] (CR) -- (TR)
    node[midway, above, lab] {graph expansion\\ + substitution\\ Thm.~\ref{thm:sec5-identification}};

\draw[->] (TR) -- (NP)
    node[midway, right, lab] {mixed second\\ variation criterion\\ Thm.~\ref{thm:sec5-triplet-criterion}};

\end{tikzpicture}
\end{adjustbox}
\caption{Logical structure of the proof. Solid arrows denote exact continuum-limit
or Lyapunov--Perron constructions. Dashed arrows denote first-order expansion
and truncation steps.}
\label{fig:proof-roadmap}
\end{figure}

\subsection*{Organisation}

 Section~\ref{sec:unreduced-model} studies the continuum limit of the unreduced
adaptive fast--slow system. Section~\ref{sec:reduced-model} develops the corresponding continuum
limit for the reduced higher-order phase equation. Sections~\ref{sec:infdim-reduction}
and~\ref{sec:identification} then carry out the continuum-level reduction, compare the two
first-order vector-field truncations, and formulate the continuum nonpairwise criterion.
Section~\ref{sec:discussion} closes with the main implications, limitations, and possible
extensions of the theory.

\section{Continuum Limit of the Unreduced Adaptive Fast--Slow Model}\label{sec:unreduced-model}

This section sets up the continuum-limit problem for the original adaptive fast--slow system before
any reduction. The goal is to put the model in a rigorous function-space framework and state the
precise well-posedness and convergence statements needed later for the first-order
compatibility comparison.

 More precisely, we first identify the continuum fast--slow equation associated with
the dense-label embedding of the finite adaptive network and place it in a natural
\(\Linfty\)-based Banach-space setting. Within that framework, the section proves global
well-posedness for the continuum dynamics and then shows that the embedded finite-\(N\)
trajectories 
converge to this solution on every finite time interval for data satisfying the
step-approximation hypothesis introduced below.

 At a conceptual level, the analysis rests on two structural features of the model.
First, once \(\theta\) is regarded as given, the fast variable \(a\) satisfies a linear
relaxation equation, so its size can be controlled explicitly by a variation-of-constants
formula. Second, after that a priori bound is available, the coupled vector field is controlled by
a finite-time Gr\"onwall estimate, which simultaneously gives stability, continuous dependence, and
the discrete-to-continuum limit. The section is organised to make those two mechanisms visible
before turning to the formal proofs.

\subsection[From the Adaptive Network to the Continuum Equation]{From the adaptive network to the continuum equation}

 We begin with the dense-label embedding of the finite network, since this is the
point where the continuum problem enters naturally. The aim is to put the discrete system, the
ambient function spaces, and the limiting equation in place before the two main theorems are
stated.

For each \(N\in\mathbb{N}\), consider the adaptive network written in chosen real lifts of the
phase variables
\begin{equation}
\dot{\theta}_i^N
=
\omega_i^N + \frac{1}{N}\sum_{j=1}^N a_{ij}^N\,\Gamma(\theta_j^N-\theta_i^N),
\qquad
\varepsilon\dot{a}_{ij}^N = -a_{ij}^N + H(\theta_i^N,\theta_j^N),
\label{eq:sec2-discrete}
\end{equation}
for \(1\le i,j\le N\).
The \(1/N\) factor places the model in the dense-interaction scaling used for graphon limits.

Let
\[
I_1^N := [0,\,1/N],
\qquad
I_i^N := \big((i-1)/N,\,i/N\big]\quad(2\le i\le N),
\qquad
\I = \bigsqcup_{i=1}^N I_i^N,
\]
and define the embedded step fields
\begin{equation}
\theta^N(t,x) := \theta_i^N(t)\quad(x\in I_i^N),
\qquad
a^N(t,x,y) := a_{ij}^N(t)\quad((x,y)\in I_i^N\times I_j^N).
\label{eq:sec2-step-embedding}
\end{equation}
Likewise set \(\omega^N(x):=\omega_i^N\) on \(I_i^N\).

With this embedding, \eqref{eq:sec2-discrete} is equivalent to
\begin{align}
\partial_t\theta^N(t,x)
&= \omega^N(x) + \int_0^1 a^N(t,x,y)\,\Gamma\!\big(\theta^N(t,y)-\theta^N(t,x)\big)\,\dd y,
\label{eq:sec2-embedded-theta}\\
\varepsilon\partial_t a^N(t,x,y)
&= -a^N(t,x,y) + H\!\big(\theta^N(t,x),\theta^N(t,y)\big).
\label{eq:sec2-embedded-a}
\end{align}

\begin{remark}[Lift convention in the continuum setting]
The continuum phase fields \(\theta^N\) and \(\theta\) are understood as chosen real lifts of
torus-valued phase profiles. Because \(\Gamma\) is \(2\pi\)-periodic and \(H\) is \(2\pi\)-periodic
in each argument, the quantities \(\Gamma(\theta(y)-\theta(x))\) and \(H(\theta(x),\theta(y))\)
are unchanged if one modifies the lift by adding integer multiples of \(2\pi\). Hence the
continuum problem can be posed on the real-valued Banach spaces \(L^\infty(\I)\) and
\(L^\infty(\I^2)\) without ambiguity at the level of the vector field. All \(L^\infty\)-norm
estimates and convergence statements below are made after fixing compatible real lifts; in
particular, \(\|\theta^N-\theta\|_{\Linfty}\to0\) refers to these chosen representatives, not to a
quotient metric on \(L^\infty(\I;\mathbb T)\).
\end{remark}

 The embedded variables suggest a natural product space: the slow variable
\(\theta\) lives on \(\I\), while the fast interaction variable \(a\) lives on \(\I^2\). We
therefore record the Banach spaces and approximation assumptions before writing down the continuum
equation itself.

Define
\[
X_\theta := \Linfty(\I),
\qquad
X_a := \Linfty(\I^2),
\qquad
X := X_\theta\times X_a,
\]
with norm
\[
\|(\theta,a)\|_X := \|\theta\|_{\Linfty(\I)} + \|a\|_{\Linfty(\I^2)}.
\]

\begin{assumption}[Regularity and approximation hypotheses]\label{ass:sec2-main}
Fix \(\varepsilon>0\). Assume:
\begin{enumerate}[label=\textup{(A\arabic*)}, ref=\textup{(A\arabic*)}]
  \item\label{ass:sec2-A1} \(\Gamma\in C_b^1(\mathbb{R})\) is \(2\pi\)-periodic, and
  \(H\in C_b^1(\mathbb{R}^2)\) is \(2\pi\)-periodic in each argument.
  \item\label{ass:sec2-A2} \(\omega\in \Linfty(\I)\).
  \item\label{ass:sec2-A3} Initial data satisfy \((\theta_0,a_0)\in X\).
  \item\label{ass:sec2-A4} There exist embedded step approximations
  \((\theta_0^N,a_0^N,\omega^N)\) on the standard equal-cell partition such that
  \[
  \|\theta_0^N-\theta_0\|_{\Linfty(\I)}
  +\|a_0^N-a_0\|_{\Linfty(\I^2)}
  +\|\omega^N-\omega\|_{\Linfty(\I)} \to 0.
  \]
\end{enumerate}
\end{assumption}

 Within this framework, the continuum equation is obtained by replacing the step
fields with general elements of \(X_\theta\) and \(X_a\), so it is posed on all of \(X\). The
discrete-to-continuum theorem below, however, is restricted by
Assumption~\ref{ass:sec2-main}\,\ref{ass:sec2-A4} to data admitting \(L^\infty\)-convergent equal-cell step
approximations. Sufficient classes for Assumption~\ref{ass:sec2-main}\,\ref{ass:sec2-A4} include continuous data on \(\I\) and \(\I^2\), and more generally \(L^\infty\) data admitting a uniformly continuous representative on the standard equal-cell partitions. Lipschitz data give an explicit \(O(1/N)\) approximation rate, which propagates to the discrete-to-continuum bound; see Corollary~\ref{cor:sec2-rate} below. (Bounded-variation data with jump discontinuities are not in general included, since the equal-cell step approximation error in \(L^\infty\) need not vanish unless the jumps are aligned with the partitions in a compatible way.)

For each \(\omega\in X_\theta\), define operators
\(\mathcal{F}_\omega:X\to X_\theta\) and \(\mathcal{G}:X\to X_a\) by
\begin{align}
\mathcal{F}_\omega(\theta,a)(x)
&:= \omega(x)
+ \int_0^1 a(x,y)\,\Gamma\!\big(\theta(y)-\theta(x)\big)\,\dd y,
\label{eq:sec2-Fop}\\
\mathcal{G}(\theta,a)(x,y)
&:= -a(x,y)+H\!\big(\theta(x),\theta(y)\big).
\label{eq:sec2-Gop}
\end{align}
When the frequency profile is fixed, we suppress it from the notation and write
\(\mathcal F=\mathcal F_\omega\).
The continuum unreduced model is
\begin{equation}
\partial_t\theta = \mathcal{F}_\omega(\theta,a),
\qquad
\varepsilon\partial_t a = \mathcal{G}(\theta,a),
\qquad
(\theta,a)|_{t=0}=(\theta_0,a_0).
\label{eq:sec2-continuum}
\end{equation}

\begin{definition}[Strong \(\Linfty\)-solution on finite horizon]
Given \(T>0\), a pair \((\theta,a)\) is a strong solution of \eqref{eq:sec2-continuum} on
\([0,T]\) if
\[
(\theta,a)\in W^{1,\infty}([0,T];X)
\]
and \eqref{eq:sec2-continuum} holds for almost every \(t\in[0,T]\).
\end{definition}

Here \(W^{1,\infty}([0,T];X)\) denotes the Bochner--Sobolev space of
\(X\)-valued functions in \(L^\infty([0,T];X)\) having one weak time derivative in
\(L^\infty([0,T];X)\); the indices \(1\) and \(\infty\) record, respectively, one weak derivative
and essential boundedness in time.

 With the model fixed, we can state the two outputs of the section. The first theorem
shows that \eqref{eq:sec2-continuum} is globally well posed in \(X\); the second identifies it as
the dense-graph limit of the embedded finite-\(N\) dynamics for data satisfying
Assumption~\ref{ass:sec2-main}\,\ref{ass:sec2-A4}.

\begin{theorem}[Well-posedness in \(X\)]\label{thm:sec2-wp}
Under Assumption~\ref{ass:sec2-main}\,\ref{ass:sec2-A1}--\ref{ass:sec2-A3}, for every \(\varepsilon>0\) and every
\((\theta_0,a_0)\in X\), system~\eqref{eq:sec2-continuum} has a unique global strong solution.
Moreover, for every \(T>0\), every \(R>0\), and every \(\varepsilon_0>0\), the
finite-time solution maps
\[
(\theta_0,a_0,\omega)\mapsto (\theta,a)
\]
corresponding to \(\varepsilon\in(0,\varepsilon_0]\) are Lipschitz on the ball
\[
\{(\theta_0,a_0,\omega):\ \|\theta_0\|_{\Linfty(\I)}+\|a_0\|_{\Linfty(\I^2)}+\|\omega\|_{\Linfty(\I)}\le R\}
\]
as maps into \(C([0,T];X)\), with a Lipschitz constant independent of
\(\varepsilon\in(0,\varepsilon_0]\).
\end{theorem}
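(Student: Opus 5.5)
The plan is to treat \eqref{eq:sec2-continuum} as an ODE in the Banach space \(X\) and use Picard--Lindel\"of together with the two structural mechanisms announced at the start of the section.

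\textbf{Local Lipschitz bounds.} First I check that \(\mathcal F_\omega\) and \(\mathcal G\) map \(X\) into \(X_\theta\) and \(X_a\) and are Lipschitz on bounded sets. For \(\mathcal G\), \(H\in C_b^1\) gives
\[
\|H(\theta_1(\cdot),\theta_1(\cdot))-H(\theta_2(\cdot),\theta_2(\cdot))\|_{\Linfty(\I^2)}\le 2\|DH\|_\infty\|\theta_1-\theta_2\|_{\Linfty}.
\]
So \(\mathcal G\) is globally Lipschitz and \(\|\mathcal G(\theta,a)\|\le\|a\|+\|H\|_\infty\). For \(\mathcal F_\omega\), I split
\[
a_1\Gamma_1-a_2\Gamma_2=(a_1-a_2)\Gamma_1+a_2(\Gamma_1-\Gamma_2).
\]
This gives
\[
\|\mathcal F_{\omega_1}(\theta_1,a_1)-\mathcal F_{\omega_2}(\theta_2,a_2)\|\le\|\omega_1-\omega_2\|+\|\Gamma\|_\infty\|a_1-a_2\|+2\|\Gamma'\|_\infty\|a_2\|\,\|\theta_1-\theta_2\|.
\]
Hence \(\mathcal F\) is Lipschitz on sets where \(\|a\|\) is bounded. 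Measurability of the integrands is routine, since compositions of Borel functions with measurable functions are measurable. Standard Banach-space Picard iteration then gives a unique local solution in \(C^1\), hence in \(W^{1,\infty}\), on a maximal interval. Uniqueness in the strong class follows from Gr\"onwall, because \(W^{1,\infty}\) functions are absolutely continuous, so the integral form holds.

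\textbf{Global existence via the linear fast equation.} I would use variation of constants,
\[
a(t)=e^{-t/\varepsilon}a_0+\varepsilon^{-1}\int_0^t e^{-(t-s)/\varepsilon}H(\theta(s,\cdot),\theta(s,\cdot))\,\dd s .
\]
This yields \(\|a(t)\|\le\max(\|a_0\|,\|H\|_\infty)=:M_a\), uniformly in \(t\) and in \(\varepsilon\). Then \(\|\partial_t\theta\|\le\|\omega\|+M_a\|\Gamma\|_\infty\), so \(\theta\) grows at most linearly. Thus no blow-up occurs and the solution is global.

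\textbf{\(\varepsilon\)-uniform Lipschitz dependence.} This is the main obstacle, because the naive Gr\"onwall estimate for \(a\) carries a factor \(1/\varepsilon\). Take two data sets in the \(R\)-ball and write \(\delta\theta=\theta_1-\theta_2\), \(\delta a=a_1-a_2\). The slow difference satisfies
\[
\|\delta\theta(t)\|\le\|\delta\theta_0\|+t\|\delta\omega\|+\int_0^t\bigl(C_1\|\delta a\|+C_2\|\delta\theta\|\bigr)\dd s,
\]
with \(C_2=2\|\Gamma'\|_\infty M_a\), and \(M_a\le\max(R,\|H\|_\infty)\) is independent of \(\varepsilon\). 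For \(\delta a\) I avoid Gr\"onwall and instead subtract the two variation-of-constants formulas:
\[
\|\delta a(t)\|\le\|\delta a_0\|+\sup_{s\le t}2\|DH\|_\infty\|\delta\theta(s)\|,
\]
using \(\varepsilon^{-1}\int_0^t e^{-(t-s)/\varepsilon}\dd s\le1\). The kernel \(\varepsilon^{-1}e^{-(t-s)/\varepsilon}\) is a probability-type weight, and this is exactly what removes the \(\varepsilon\)-dependence.

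Setting \(m(t)=\sup_{s\le t}\|\delta\theta(s)\|\) and substituting the \(\delta a\) bound gives
\[
m(t)\le C\bigl(\|\delta\theta_0\|+\|\delta a_0\|+\|\delta\omega\|\bigr)+C'\int_0^t m(s)\,\dd s ,
\]
with \(C,C'\) depending only on \(T\), \(R\), \(\Gamma\) and \(H\). Gr\"onwall bounds \(m(T)\) by \(e^{C'T}\) times the data difference, and the \(\delta a\) bound then controls \(\sup_t\|\delta a\|\). This gives Lipschitz continuity into \(C([0,T];X)\) with a constant independent of \(\varepsilon\in(0,\varepsilon_0]\). In fact no restriction to \(\varepsilon\le\varepsilon_0\) is needed, since \(\varepsilon_0\) enters only harmlessly.
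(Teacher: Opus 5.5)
Your proposal is correct and follows essentially the same route as the paper: Picard--Lindel\"of for local existence, the variation-of-constants bound \(\|a(t)\|\le\max\{\|a_0\|,M_H\}\) for global existence, and subtraction of the two Duhamel formulas (the kernel \(\varepsilon^{-1}e^{-(t-s)/\varepsilon}\) having mass at most one) followed by Gr\"onwall on \(\delta\theta\) to get \(\varepsilon\)-uniform Lipschitz dependence. The only difference is how the \(\delta a\) bound enters the slow estimate. The paper integrates it in time and uses Fubini, which gives the sharper initial-layer term \(\varepsilon M_\Gamma\|\delta a(0)\|\). Your running-supremum version gives \(tM_\Gamma\|\delta a_0\|\) instead, which is cruder for small \(\varepsilon\) but, as you note, uniform in all \(\varepsilon>0\).
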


For later use, fix \(N\in\mathbb N\) and write
\begin{align*}
\mathfrak{S}_N^\theta
&:=
\{\theta\in X_\theta:\theta\text{ is constant a.e. on each }I_i^N\},\\
\mathfrak{S}_N^a
&:=
\{a\in X_a:a\text{ is constant a.e. on each }I_i^N\times I_j^N\},
\end{align*}
and \(\mathfrak{S}_N:=\mathfrak{S}_N^\theta\times \mathfrak{S}_N^a\). The step-coefficient map identifies
\(\mathfrak{S}_N^\theta\) with \(\R^N\) and \(\mathfrak{S}_N^a\) with \(\R^{N\times N}\).

\begin{lemma}[Step-field invariance of the unreduced continuum flow]
\label{lem:sec2-stepfield-preservation}
Assume that the frequency profile in~\eqref{eq:sec2-continuum} is a step field
\(\omega=\omega^N\in \mathfrak{S}_N^\theta\). Let \((\theta_0,a_0)\in \mathfrak{S}_N\), and let
\((\theta(\cdot),a(\cdot))\) be the unique strong solution of~\eqref{eq:sec2-continuum}
on \([0,\infty)\) with these initial data. Then \((\theta(t),a(t))\in \mathfrak{S}_N\) for every
\(t\ge 0\), and its step-coefficients \((\theta_i(t),a_{ij}(t))_{i,j=1}^N\) solve the
finite-\(N\) unreduced ODE~\eqref{eq:sec2-discrete}.
\end{lemma}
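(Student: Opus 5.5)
The plan is to build a solution that lives in \(\mathfrak{S}_N\) by hand and then appeal to the uniqueness part of Theorem~\ref{thm:sec2-wp}; this avoids showing directly that the flow preserves the step structure. Let \((\theta_{0,i},a_{0,ij})\) be the step coefficients of \((\theta_0,a_0)\) and \(\omega_i\) those of \(\omega^N\). The finite-\(N\) system~\eqref{eq:sec2-discrete} has a globally Lipschitz \(\theta\)-equation right-hand side for bounded \(a\), since \(\Gamma,H\in C_b^1\) by \ref{ass:sec2-A1}. Its \(a\)-equation is linear relaxation with bounded forcing \(H\). Hence the Picard--Lindel\"of theorem gives a unique local \(C^1\) solution \((\theta_i(t),a_{ij}(t))\) with these initial data.

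Global existence follows from two a priori bounds. Variation of constants gives
\[
|a_{ij}(t)|\le |a_{0,ij}|+\|H\|_\infty,
\]
and then \(|\dot\theta_i|\le |\omega_i|+(\max|a_{0,ij}|+\|H\|_\infty)\|\Gamma\|_\infty\). Since neither bound can blow up, the solution extends to \([0,\infty)\).

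Next, embed the finite solution via~\eqref{eq:sec2-step-embedding} to obtain \((\tilde\theta(t),\tilde a(t))\in\mathfrak S_N\). The computation leading to~\eqref{eq:sec2-embedded-theta}--\eqref{eq:sec2-embedded-a} shows that this embedded pair satisfies~\eqref{eq:sec2-continuum} pointwise in \(t\) and for a.e. \((x,y)\). Explicitly, for \(x\in I_i^N\),
\[
\int_0^1 \tilde a(t,x,y)\Gamma(\tilde\theta(t,y)-\tilde\theta(t,x))\dd y=\tfrac1N\sum_j a_{ij}(t)\Gamma(\theta_j(t)-\theta_i(t)),
\]
because each cell \(I_j^N\) has measure \(1/N\). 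Likewise \(H(\tilde\theta(t,x),\tilde\theta(t,y))=H(\theta_i,\theta_j)\) on \(I_i^N\times I_j^N\).

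It remains to check that \((\tilde\theta,\tilde a)\in W^{1,\infty}([0,T];X)\) for every \(T\), so that it qualifies as a strong solution. The step-coefficient map \(\mathbb R^N\times\mathbb R^{N\times N}\to\mathfrak S_N\subset X\) is linear and isometric from the max-norm on coefficients, because the cells have positive measure and so the \(L^\infty\)-norm of a step field equals the largest absolute coefficient. Therefore the \(C^1\) coefficient curve maps to a \(C^1\) \(X\)-valued curve, and its classical derivative is the embedding of \((\dot\theta_i,\dot a_{ij})\). This derivative is bounded on \([0,T]\) by the a priori bounds above.

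By the uniqueness statement of Theorem~\ref{thm:sec2-wp}, the given strong solution \((\theta,a)\) coincides with \((\tilde\theta,\tilde a)\) on every \([0,T]\). Here coincidence is as elements of \(C([0,T];X)\); strong solutions are continuous in time since they lie in \(W^{1,\infty}\). Hence \((\theta(t),a(t))\in\mathfrak S_N\) for every \(t\ge0\), not merely a.e. \(t\), and its coefficients are the finite-\(N\) solution.

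I expect no serious obstacle. The only points needing care are two. First, the a.e.-in-\(x\) identification: one must check that modifying step fields on null sets, including the cell endpoints, changes nothing in \(L^\infty\). Second, the identification of the continuous representative in time. Once the lemma is reduced to uniqueness in Theorem~\ref{thm:sec2-wp}, both are routine.
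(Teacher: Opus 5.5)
Your proposal is correct and follows essentially the paper's route: on step fields, \(\mathcal F_{\omega^N}\) and \(\mathcal G\) reduce to the finite-\(N\) right-hand side, and the finite-dimensional solution is identified with the continuum strong solution via uniqueness in \(X\). Your additional checks (global existence of the finite ODE from the a priori bounds, and \(W^{1,\infty}([0,T];X)\)-regularity of the embedded curve via the isometric step-coefficient map) spell out details the paper leaves implicit in its one-line Picard--Lindel\"of appeal.
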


\begin{proof}
The operators \(\mathcal F_{\omega^N}\) and \(\mathcal G\) of~\eqref{eq:sec2-Fop}--\eqref{eq:sec2-Gop}
map step fields to step fields. If \(\theta\in \mathfrak{S}_N^\theta\) and \(a\in \mathfrak{S}_N^a\), choose step
representatives such that \(\theta=\theta_i\) a.e. on \(I_i^N\) and
\(a=a_{ij}\) a.e. on \(I_i^N\times I_j^N\). Then, for a.e. \(x\in I_i^N\),
\[
\mathcal F_{\omega^N}(\theta,a)(x)
=
\omega_i^N+\sum_{j=1}^N\int_{I_j^N}a_{ij}\,\Gamma(\theta_j-\theta_i)\,\dd y
=
\omega_i^N+\frac1N\sum_{j=1}^N a_{ij}\,\Gamma(\theta_j-\theta_i),
\]
and \(\mathcal G(\theta,a)(x,y)=-a_{ij}+H(\theta_i,\theta_j)\) for
\((x,y)\in I_i^N\times I_j^N\). Thus the vector field of~\eqref{eq:sec2-continuum}
preserves \(\mathfrak{S}_N\), and its restriction to \(\mathfrak{S}_N\simeq\R^N\times\R^{N\times N}\)
coincides with~\eqref{eq:sec2-discrete}. Picard--Lindel\"of on this finite-dimensional
subspace identifies the restricted flow with the finite-\(N\) solution.
\end{proof}

\begin{theorem}[Discrete-to-continuum convergence under the step-approximation hypothesis]\label{thm:sec2-conv}
Assume Assumption~\ref{ass:sec2-main}. For each \(N\), let
\((\theta^N,a^N)\) be the step-field solution induced by \eqref{eq:sec2-discrete} via
\eqref{eq:sec2-step-embedding}, initialised by the step coefficients of
\((\theta_0^N,a_0^N)\) and using the frequencies \(\omega_i^N\) associated with
\(\omega^N\) from Assumption~\ref{ass:sec2-main}\,\ref{ass:sec2-A4}. Let
\((\theta,a)\) solve \eqref{eq:sec2-continuum}.
Then for every fixed \(T>0\),
\[
\sup_{t\in[0,T]}
\left(
\|\theta^N(t)-\theta(t)\|_{\Linfty(\I)}
+\|a^N(t)-a(t)\|_{\Linfty(\I^2)}
\right)
\to 0
\quad (N\to\infty).
\]
More precisely, define
\[
R_{a,*}:=\max\!\left\{M_H,\|a_0\|_{\Linfty(\I^2)},\sup_{N\in\mathbb{N}}\|a_0^N\|_{\Linfty(\I^2)}\right\},
\]
and set
\[
E_\theta^N:=\|\theta_0^N-\theta_0\|_{\Linfty(\I)},
\qquad
E_a^N:=\|a_0^N-a_0\|_{\Linfty(\I^2)},
\qquad
E_\omega^N:=\|\omega^N-\omega\|_{\Linfty(\I)},
\]
\[
\Lambda_*:=2R_{a,*}L_\Gamma+2M_\Gamma\Mdh.
\]
Then, for all \(N\) and all \(t\in[0,T]\),
\[
\|\theta^N(t)-\theta(t)\|_{\Linfty(\I)}
\le
e^{\Lambda_* t}
\bigl(E_\theta^N+tE_\omega^N+\varepsilon M_\Gamma E_a^N\bigr),
\]
\[
\|a^N(t)-a(t)\|_{\Linfty(\I^2)}
\le
e^{-t/\varepsilon}E_a^N
+
2\Mdh e^{\Lambda_* t}
\bigl(E_\theta^N+tE_\omega^N+\varepsilon M_\Gamma E_a^N\bigr).
\]
Consequently, we have
\[
\sup_{t\in[0,T]}\|(\theta^N,a^N)(t)-(\theta,a)(t)\|_X
\le
E_a^N
+
(1+2\Mdh)e^{\Lambda_*T}
\bigl(E_\theta^N+TE_\omega^N+\varepsilon M_\Gamma E_a^N\bigr).
\]
In particular, for every \(\varepsilon_0>0\),
\[
C_{T,\varepsilon_0}
:=
1+(1+2\Mdh)e^{\Lambda_*T}\max\{1,T,\varepsilon_0M_\Gamma\}
\]
gives the uniform bound
\[
\sup_{t\in[0,T]}\|(\theta^N,a^N)(t)-(\theta,a)(t)\|_X
\le
C_{T,\varepsilon_0}
\bigl(E_\theta^N+E_a^N+E_\omega^N\bigr)
\]
for every \(\varepsilon\in(0,\varepsilon_0]\).
\end{theorem}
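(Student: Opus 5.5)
By Lemma~\ref{lem:sec2-stepfield-preservation}, the embedded finite-$N$ solution $(\theta^N,a^N)$ is exactly the strong solution of the continuum system~\eqref{eq:sec2-continuum} with frequency $\omega^N$ and step initial data $(\theta_0^N,a_0^N)$. So the whole statement becomes a stability estimate for~\eqref{eq:sec2-continuum} under perturbations of $(\theta_0,a_0,\omega)$. It is essentially a quantitative version of the Lipschitz statement in Theorem~\ref{thm:sec2-wp}, which I will reprove with explicit constants. I write $M_\Gamma,M_H$ for sup-norms, $L_\Gamma$ for the Lipschitz constant of $\Gamma$, and $\Mdh$ for a bound on each partial derivative of $H$. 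I work throughout with the mild (Duhamel) forms of both equations, which hold in $X$ for strong solutions.

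\emph{Step 1: a priori bound on the fast variable.} Variation of constants gives
\[
a(t)=e^{-t/\varepsilon}a_0+\frac1\varepsilon\int_0^t e^{-(t-s)/\varepsilon}H(\theta(s,x),\theta(s,y))\dd s .
\]
This is a convex combination of $a_0$ and values of $H$, so $\|a(t)\|_{\Linfty}\le\max\{\|a_0\|,M_H\}\le R_{a,*}$. The same bound holds for $a^N$, uniformly in $N$, $t$ and $\varepsilon$.

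\emph{Step 2: the $a$-difference in terms of the $\theta$-difference.} Set $\delta(t):=\|\theta^N(t)-\theta(t)\|_{\Linfty}$ and $e:=a^N-a$. Then $\varepsilon\partial_te=-e+[H(\theta^N(x),\theta^N(y))-H(\theta(x),\theta(y))]$. The bracket is bounded by $2\Mdh\delta(s)$, since each of the two arguments moves by at most $\delta$. Duhamel therefore gives
\[
\|e(t)\|\le e^{-t/\varepsilon}E_a^N+\frac{2\Mdh}{\varepsilon}\int_0^te^{-(t-s)/\varepsilon}\delta(s)\dd s .
\]

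\emph{Step 3: the $\theta$-difference.} I split
\[
\partial_t(\theta^N-\theta)=(\omega^N-\omega)+\int e\,\Gamma(\theta^N(y)-\theta^N(x))\dd y+\int a\,[\Gamma(\theta^N(y)-\theta^N(x))-\Gamma(\theta(y)-\theta(x))]\dd y .
\]
The last term is bounded by $2R_{a,*}L_\Gamma\delta$ and the middle term by $M_\Gamma\|e\|$. Integrating in time and inserting Step~2:
\begin{itemize}[label={}]
\item the initial-layer part contributes $M_\Gamma E_a^N\int_0^te^{-s/\varepsilon}\dd s\le\varepsilon M_\Gamma E_a^N$;
\item the convolution part, after Fubini, becomes $2M_\Gamma\Mdh\int_0^t\delta(r)\bigl(1-e^{-(t-r)/\varepsilon}\bigr)\dd r\le2M_\Gamma\Mdh\int_0^t\delta(r)\dd r$.
\end{itemize}
This yields
\[
\delta(t)\le E_\theta^N+tE_\omega^N+\varepsilon M_\Gamma E_a^N+\Lambda_*\int_0^t\delta .
\]
The forcing term is nondecreasing in $t$, so Gr\"onwall gives the stated bound $\delta(t)\le e^{\Lambda_*t}(E_\theta^N+tE_\omega^N+\varepsilon M_\Gamma E_a^N)$.

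\emph{Step 4: the $a$-bound and the consequences.} The claimed estimate is a sum of two terms. The first, $e^{-t/\varepsilon}E_a^N$, is the initial-layer term of Step~2, carried over directly. For the second, I insert the Step~3 bound for $\delta$ into the convolution term of Step~2. That bound is monotone in $s$, so the convolution is at most $2\Mdh$ times its value at $s=t$, since $\varepsilon^{-1}\int_0^te^{-(t-s)/\varepsilon}\dd s\le1$. Adding the two error bounds and taking $\sup_{t\le T}$ gives the combined estimate. The uniform-in-$\varepsilon$ constant $C_{T,\varepsilon_0}$ then follows from the bound $E_\theta^N+TE_\omega^N+\varepsilon M_\Gamma E_a^N\le\max\{1,T,\varepsilon_0M_\Gamma\}(E_\theta^N+E_a^N+E_\omega^N)$. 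Convergence to zero then follows from~\ref{ass:sec2-A4}.

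The main obstacle is Step~3: obtaining a bound uniform in $\varepsilon$ despite the $1/\varepsilon$ in the fast Duhamel kernel. Differentiating $e$ naively produces $1/\varepsilon$ factors. The fix is to integrate the $\theta$-equation in time and then exchange the order of integration, so that the kernel integrates to at most $1$ and the initial layer costs only $\varepsilon M_\Gamma E_a^N$. A minor technical point is the pointwise a.e. versus $L^\infty$-norm bookkeeping inside the Bochner integrals; I will handle it by estimating norms inside the integrals, which is legitimate for Bochner integrals in $X$.
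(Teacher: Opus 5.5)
Your proposal is correct and follows essentially the same route as the paper. The paper first invokes Lemma~\ref{lem:sec2-stepfield-preservation}, then applies Proposition~\ref{prop:sec2-stability} with $R_a\le R_{a,*}$; that proposition's proof is exactly your Steps 1--4 (the Duhamel bound for the fast difference, time integration with Fubini to get the $\varepsilon M_\Gamma E_a^N$ initial-layer term, Gr\"onwall, and reinsertion into the fast bound using monotonicity), so the only difference is that you inline the stability estimate instead of citing it.
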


 The remainder of the section is organised around the two ingredients just described.
We first establish local Lipschitz control for the continuum vector field and derive the explicit
variation-of-constants formula for the fast variable. These provide the local theory and the key
uniform \(L^\infty\)-bound on \(a\). We then combine that bound with a comparison
estimate, obtained by applying Gr\"onwall's inequality to the difference of two solutions, to obtain
global continuation, Lipschitz dependence on the data, and the quantitative discrete-to-continuum
estimate.

We do not introduce a separate graphon-kernel notation here because the theorem is
formulated as a labelled dense-kernel continuum limit rather than as a graphon statement modulo
relabelling. The step embedding on \(\I^2\) together with the \(1/N\)-scaling in
\eqref{eq:sec2-discrete} places the problem in the dense-interaction regime, but the convergence
statement is deliberately stronger: it applies to bounded weighted interaction fields \(a^N\)
whose chosen equal-cell representatives converge in \(L^\infty(\I^2)\), together with compatible
\(L^\infty\)-convergence of \(\theta_0^N\) and \(\omega^N\), as required in
Assumption~\ref{ass:sec2-main}\,\ref{ass:sec2-A4}. This fixed-label sup-norm framework is narrower
than more general graphon, graphop, or measure-valued limit theories, but it is the one compatible
with the uniform \(L^\infty\)-bound on \(a\), the sup-norm stability estimates, and the later
Lyapunov--Perron construction used in this paper. Furthermore, this approach constitutes the most direct path to demonstrate the emergence of higher-order interactions on the slow manifold in the continuum limit of an adaptive network, which is the focus of this paper.

\subsection[A Priori Bounds, Stability, and Convergence]{A priori bounds, stability, and convergence}
\label{subsec:sec2-proof}

 This subsection proves Theorems~\ref{thm:sec2-wp} and~\ref{thm:sec2-conv}. Rather than splitting the discussion into
several small subsections, we proceed in one continuous chain: first the local bounds for the
vector field, then the explicit control of the fast variable, and finally the stability estimate
that upgrades the local theory to global well-posedness and yields the continuum limit.

 We begin by fixing the uniform constants that appear throughout the estimates.

Set
\[
M_\Gamma := \|\Gamma\|_{\Linfty(\mathbb{R})},
\quad
L_\Gamma := \|\Gamma'\|_{\Linfty(\mathbb{R})},
\quad
M_H := \|H\|_{\Linfty(\mathbb{R}^2)},
\quad
\Mdh := \max_{r=1,2}\|\partial_r H\|_{\Linfty(\mathbb{R}^2)}.
\]

\begin{lemma}[Lipschitz bounds for \(\mathcal{F}\) and \(\mathcal{G}\)]\label{lem:sec2-lipschitz}
Fix \(R_a>0\). If \((\theta_\ell,a_\ell)\in X\) with
\(\|a_\ell\|_{\Linfty(\I^2)}\le R_a\) for \(\ell=1,2\), then
\begin{align}
\|\mathcal{F}_\omega(\theta_1,a_1)-\mathcal{F}_\omega(\theta_2,a_2)\|_{\Linfty(\I)}
&\le M_\Gamma\|a_1-a_2\|_{\Linfty(\I^2)}
+2R_aL_\Gamma\|\theta_1-\theta_2\|_{\Linfty(\I)},
\label{eq:sec2-lip-F}\\
\|\mathcal{G}(\theta_1,a_1)-\mathcal{G}(\theta_2,a_2)\|_{\Linfty(\I^2)}
&\le \|a_1-a_2\|_{\Linfty(\I^2)}
+2\Mdh\|\theta_1-\theta_2\|_{\Linfty(\I)}.
\label{eq:sec2-lip-G}
\end{align}
In particular, the vector field
\[
\mathcal{V}_\varepsilon(\theta,a)
:=
\big(\mathcal{F}_\omega(\theta,a),\,\varepsilon^{-1}\mathcal{G}(\theta,a)\big)
\]
is locally Lipschitz on \(X\).
\end{lemma}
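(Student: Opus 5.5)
The plan is to estimate pointwise in \(x\) (resp.\ \((x,y)\)) and then take essential suprema, splitting each difference into an "\(a\)-part" and a "\(\theta\)-part" by adding and subtracting a mixed term. For \(\mathcal F_\omega\), the frequency \(\omega\) cancels in the difference. For a.e.\ \(x\) we write
\[
a_1(x,y)\Gamma\bigl(\theta_1(y)-\theta_1(x)\bigr)-a_2(x,y)\Gamma\bigl(\theta_2(y)-\theta_2(x)\bigr)
=
(a_1-a_2)(x,y)\,\Gamma\bigl(\theta_1(y)-\theta_1(x)\bigr)
+a_2(x,y)\bigl[\Gamma\bigl(\theta_1(y)-\theta_1(x)\bigr)-\Gamma\bigl(\theta_2(y)-\theta_2(x)\bigr)\bigr].
\]
The first term is bounded by \(M_\Gamma\|a_1-a_2\|_{\Linfty(\I^2)}\). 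For the second, the mean value theorem with \(L_\Gamma=\|\Gamma'\|_\infty\) gives a bound of \(R_aL_\Gamma\bigl(|\theta_1(y)-\theta_2(y)|+|\theta_1(x)-\theta_2(x)|\bigr)\le 2R_aL_\Gamma\|\theta_1-\theta_2\|_{\Linfty(\I)}\) for a.e.\ \((x,y)\). Integrating over \(y\in\I\), a set of measure one, and taking the essential supremum in \(x\) yields \eqref{eq:sec2-lip-F}.

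The one technical point is measure-theoretic. We must make sure that, for a.e.\ \(x\), the section \(y\mapsto a_\ell(x,y)\) is measurable and essentially bounded by \(R_a\), so that the integral defining \(\mathcal F_\omega\) makes sense and the pointwise bound passes under the integral. By Fubini/Tonelli applied to the null set where \(|a_\ell|>R_a\) or \(|a_1-a_2|>\|a_1-a_2\|_\infty\), almost every section avoids these sets up to a null set in \(y\). The \(\theta\)-terms are compositions of a continuous \(\Gamma\) with measurable functions, hence measurable, and changing representatives of \(\theta_\ell\) only alters things on null sets. This is the only step requiring care; everything else is elementary.

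For \(\mathcal G\), the difference is \(-(a_1-a_2)(x,y)+H(\theta_1(x),\theta_1(y))-H(\theta_2(x),\theta_2(y))\). Applying the mean value theorem in each argument separately, with \(\Mdh\) bounding both partial derivatives, gives \(|H(\cdot)-H(\cdot)|\le \Mdh\bigl(|\theta_1(x)-\theta_2(x)|+|\theta_1(y)-\theta_2(y)|\bigr)\le 2\Mdh\|\theta_1-\theta_2\|_{\Linfty(\I)}\) for a.e.\ \((x,y)\). Here "a.e." uses the fact that preimages of null sets under the coordinate projections \(\I^2\to\I\) are null. Taking the essential supremum gives \eqref{eq:sec2-lip-G}; notably this bound needs no \(R_a\) restriction.

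Finally, local Lipschitz continuity of \(\mathcal V_\varepsilon\) follows directly. Any bounded set in \(X\) lies in \(\{\|a\|_{\Linfty(\I^2)}\le R_a\}\) for some \(R_a\), and there the two estimates combine to give
\[
\|\mathcal V_\varepsilon(\theta_1,a_1)-\mathcal V_\varepsilon(\theta_2,a_2)\|_X\le \max\{M_\Gamma+\varepsilon^{-1},\,2R_aL_\Gamma+2\varepsilon^{-1}\Mdh\}\,\|(\theta_1,a_1)-(\theta_2,a_2)\|_X .
\]
That \(\mathcal V_\varepsilon\) maps \(X\) into \(X\) follows from the same pointwise bounds, namely \(|\mathcal F_\omega|\le\|\omega\|_\infty+M_\Gamma\|a\|_\infty\) and \(|\mathcal G|\le \|a\|_\infty+M_H\).
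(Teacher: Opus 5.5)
Your proposal is correct and follows essentially the same route as the paper: you add and subtract \(a_2(x,y)\Gamma(\theta_1(y)-\theta_1(x))\), apply the mean value theorem to \(\Gamma\) and to \(H\) in each argument, and take essential suprema. Your measure-theoretic remarks and the explicit local Lipschitz constant for \(\mathcal V_\varepsilon\) (which checks out) make explicit what the paper leaves as ``immediate.''
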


\begin{proof}
For \(\mathcal{F}_\omega\), add and subtract
\(a_2(x,y)\Gamma(\theta_1(y)-\theta_1(x))\):
\begin{align*}
&|\mathcal{F}_\omega(\theta_1,a_1)(x)-\mathcal{F}_\omega(\theta_2,a_2)(x)| \\
&\le \int_0^1 |a_1-a_2|(x,y)\,|\Gamma(\theta_1(y)-\theta_1(x))|\,\dd y \\
&\quad + \int_0^1 |a_2(x,y)|\,
|\Gamma(\theta_1(y)-\theta_1(x)) - \Gamma(\theta_2(y)-\theta_2(x))|\,\dd y \\
&\le M_\Gamma\|a_1-a_2\|_{\Linfty(\I^2)}
+ R_aL_\Gamma\int_0^1
\big(|\theta_1(y)-\theta_2(y)|+|\theta_1(x)-\theta_2(x)|\big)\,\dd y \\
&\le M_\Gamma\|a_1-a_2\|_{\Linfty(\I^2)}
+2R_aL_\Gamma\|\theta_1-\theta_2\|_{\Linfty(\I)}.
\end{align*}
Taking the essential supremum in \(x\) gives \eqref{eq:sec2-lip-F}.

For \(\mathcal{G}\), we use
\begin{align*}
&|\mathcal{G}(\theta_1,a_1)(x,y)-\mathcal{G}(\theta_2,a_2)(x,y)| \\
&\le |a_1-a_2|(x,y)
 + |H(\theta_1(x),\theta_1(y)) - H(\theta_2(x),\theta_2(y))| \\
&\le \|a_1-a_2\|_{\Linfty(\I^2)}
+\Mdh\big(|\theta_1(x)-\theta_2(x)|+|\theta_1(y)-\theta_2(y)|\big) \\
&\le \|a_1-a_2\|_{\Linfty(\I^2)}
+2\Mdh\|\theta_1-\theta_2\|_{\Linfty(\I)}.
\end{align*}
Taking the essential supremum in \((x,y)\) gives \eqref{eq:sec2-lip-G}.
The local Lipschitz claim for \(\mathcal{V}_\varepsilon\) is immediate.
\end{proof}

\begin{lemma}[Variation-of-constants and \(a\)-bounds]\label{lem:sec2-a-bound}
Let \((\theta,a)\) be a strong solution of \eqref{eq:sec2-continuum} on \([0,T]\).
Then the following variation-of-constants formula holds in \(X_a\) for every
\(t\in[0,T]\). After choosing a jointly measurable representative, it holds for a.e.
\((x,y)\in\I^2\) and every \(t\in[0,T]\):
\begin{equation}
a(t,x,y)
= e^{-t/\varepsilon}a_0(x,y)
+\frac{1}{\varepsilon}\int_0^t
e^{-(t-s)/\varepsilon}H\!\big(\theta(s,x),\theta(s,y)\big)\,\dd s.
\label{eq:sec2-voc-a}
\end{equation}
Consequently,
\begin{equation}
\|a(t)\|_{\Linfty(\I^2)}
\le
e^{-t/\varepsilon}\|a_0\|_{\Linfty(\I^2)}
+\big(1-e^{-t/\varepsilon}\big)M_H
\le
\max\{\|a_0\|_{\Linfty(\I^2)},M_H\}.
\label{eq:sec2-a-priori}
\end{equation}
\end{lemma}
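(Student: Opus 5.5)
The plan is to treat the fast equation as a linear inhomogeneous ODE in the Banach space \(X_a\), with \(\theta\) regarded as given. Set \(g(s):=H(\theta(s,\cdot),\theta(s,\cdot\cdot))\in X_a\). Since \(H\) is Lipschitz with constant \(\Mdh\) in each argument and \(\theta\in W^{1,\infty}([0,T];X_\theta)\subset C([0,T];X_\theta)\), the map \(s\mapsto g(s)\) is continuous into \(X_a\). It satisfies \(\|g(s)\|_{X_a}\le M_H\) and \(\|g(s)-g(s')\|\le 2\Mdh\|\theta(s)-\theta(s')\|\). Hence the Bochner integral in~\eqref{eq:sec2-voc-a} is well defined in \(X_a\).

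The core step is an integrating-factor argument. I will consider \(b(t):=e^{t/\varepsilon}a(t)\in W^{1,\infty}([0,T];X_a)\). Since \(t\mapsto e^{t/\varepsilon}\) is smooth and scalar, the product rule for weak derivatives of Bochner--Sobolev functions gives \(b'(t)=e^{t/\varepsilon}(\varepsilon^{-1}a+\partial_t a)=\varepsilon^{-1}e^{t/\varepsilon}g(t)\) for a.e.\ \(t\), using \(\varepsilon\partial_t a=-a+g\). Functions in \(W^{1,\infty}([0,T];X_a)\) are absolutely continuous, after choosing a representative, and equal the integral of their weak derivative. Therefore \(b(t)=a_0+\varepsilon^{-1}\int_0^t e^{s/\varepsilon}g(s)\dd s\) in \(X_a\) for every \(t\). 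Multiplying by \(e^{-t/\varepsilon}\) gives the formula in \(X_a\).

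To pass to the pointwise a.e.\ \((x,y)\) statement, I will use the continuity of \(g\). With \(\theta\) continuous into \(L^\infty\), one can choose a jointly measurable representative of \((s,x)\mapsto\theta(s,x)\), for example as a uniform limit of piecewise-linear-in-time interpolants. Then \((s,x,y)\mapsto H(\theta(s,x),\theta(s,y))\) is jointly measurable and bounded. The Bochner integral then coincides a.e.\ with the pointwise Lebesgue integral in \(s\), which one checks by testing against \(L^1(\I^2)\) functions and applying Fubini. This gives equality for a.e.\ \((x,y)\) at each fixed \(t\). Both sides are continuous in \(t\): the left side by the choice of representative, the right side by dominated convergence. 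So the exceptional null set can be taken independent of \(t\), first on a countable dense set of times and then by continuity. I expect this measurability bookkeeping to be the only delicate point, and it is routine.

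The bound~\eqref{eq:sec2-a-priori} then follows by taking norms in~\eqref{eq:sec2-voc-a}. This gives \(\|a(t)\|\le e^{-t/\varepsilon}\|a_0\|+\varepsilon^{-1}\int_0^t e^{-(t-s)/\varepsilon}M_H\dd s=e^{-t/\varepsilon}\|a_0\|+(1-e^{-t/\varepsilon})M_H\). The right-hand side is a convex combination of \(\|a_0\|_{\Linfty(\I^2)}\) and \(M_H\), hence at most their maximum.
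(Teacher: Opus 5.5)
Your proposal is correct and follows the paper's argument. Both treat \(\varepsilon a'=-a+H(\theta,\theta)\) as a linear inhomogeneous equation in \(X_a\) with continuous bounded forcing, obtain the variation-of-constants formula, and then take norms; your integrating-factor computation with \(e^{t/\varepsilon}\) and absolute continuity of \(W^{1,\infty}\) functions is just the elementary proof of the Duhamel formula for the semigroup \(e^{-t/\varepsilon}I\) that the paper cites, and your extra bookkeeping for the pointwise a.e.\ \((x,y)\) version (a jointly measurable, time-continuous representative plus identification of the Bochner integral with the pointwise integral via Fubini) is sound and fills in a step the paper leaves implicit.
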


\begin{proof}
Set
\[
f(t):=H\bigl(\theta(t,\cdot),\theta(t,\cdot)\bigr)\in X_a.
\]
More explicitly, \(f(t)(x,y)=H(\theta(t,x),\theta(t,y))\), and
\[
\|f(t)-f(s)\|_{X_a}\le 2\Mdh\|\theta(t)-\theta(s)\|_{X_\theta};
\]
hence \(t\mapsto f(t)\) is continuous as an \(X_a\)-valued function. The fast equation is the
linear inhomogeneous equation
\[
\varepsilon a'(t)=-a(t)+f(t)
\]
in \(X_a\), or \(a'(t)=-\varepsilon^{-1}a(t)+\varepsilon^{-1}f(t)\), where
\(-\varepsilon^{-1}I\) generates the strongly continuous semigroup \(e^{-t/\varepsilon}I\).
Duhamel's formula therefore gives
\[
a(t)=e^{-t/\varepsilon}a_0+\frac1\varepsilon\int_0^t e^{-(t-s)/\varepsilon}
H\bigl(\theta(s,\cdot),\theta(s,\cdot)\bigr)\,\dd s
\quad\text{in }X_a,
\]
which is \eqref{eq:sec2-voc-a}. Taking \(\|\cdot\|_{X_a}\) and using
\(\|H(\theta(s,\cdot),\theta(s,\cdot))\|_{X_a}\le M_H\) gives
\eqref{eq:sec2-a-priori}.
\end{proof}

\begin{proposition}[Local well-posedness in \(X\)]\label{prop:sec2-local}
Under Assumption~\ref{ass:sec2-main}\,\ref{ass:sec2-A1}--\ref{ass:sec2-A3}, for every initial datum
\((\theta_0,a_0)\in X\), there exists \(T_{\mathrm{loc}}>0\) and a unique strong solution of
\eqref{eq:sec2-continuum} on \([0,T_{\mathrm{loc}}]\).
\end{proposition}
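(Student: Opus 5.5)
The plan is to recast \eqref{eq:sec2-continuum} as the autonomous ODE \(u'=\mathcal V_\varepsilon(u)\), \(u=(\theta,a)\), on the Banach space \(X\), and to solve it by the Picard--Lindel\"of (Banach fixed-point) argument for locally Lipschitz vector fields. Concretely, I will fix \(u_0=(\theta_0,a_0)\in X\) and \(\rho>0\), and let \(B_\rho:=\{u\in X:\|u-u_0\|_X\le\rho\}\). On \(B_\rho\) every \(a\)-component satisfies \(\|a\|_{\Linfty(\I^2)}\le R_a:=\|a_0\|_{\Linfty(\I^2)}+\rho\). Lemma~\ref{lem:sec2-lipschitz} therefore gives a Lipschitz constant
\[
L_\rho:=\max\bigl\{M_\Gamma+\varepsilon^{-1},\;2R_aL_\Gamma+2\varepsilon^{-1}\Mdh\bigr\}
\]
for \(\mathcal V_\varepsilon\) on \(B_\rho\). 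It also gives the bound \(\|\mathcal V_\varepsilon(u)\|_X\le K_\rho:=\|\omega\|_{\Linfty}+M_\Gamma R_a+\varepsilon^{-1}(R_a+M_H)\) on \(B_\rho\). Here \(\omega\in\Linfty(\I)\) by \ref{ass:sec2-A2}, and \(\Gamma,H\) are bounded by \ref{ass:sec2-A1}.

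Before the fixed-point step I need a measurability check: \(\mathcal V_\varepsilon\) must map \(X\) into \(X\). For \(\theta\in\Linfty(\I)\) the functions \((x,y)\mapsto\Gamma(\theta(y)-\theta(x))\) and \(H(\theta(x),\theta(y))\) are measurable and bounded on \(\I^2\), since \(\Gamma\) and \(H\) are continuous. Fubini then shows that \(x\mapsto\int_0^1 a(x,y)\Gamma(\theta(y)-\theta(x))\dd y\) is measurable with sup bounded by \(M_\Gamma\|a\|_{\Linfty}\). This is routine, and I will note it only briefly.

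Next I choose \(T_{\mathrm{loc}}:=\min\{\rho/K_\rho,\,1/(2L_\rho)\}\). I work in the complete metric space \(\mathcal C:=C([0,T_{\mathrm{loc}}];B_\rho)\) with the sup-norm, and define
\[
(\Phi u)(t):=u_0+\int_0^t\mathcal V_\varepsilon(u(s))\dd s.
\]
This is a Bochner (indeed Riemann) integral, because \(s\mapsto\mathcal V_\varepsilon(u(s))\) is continuous into \(X\) by the local Lipschitz property. The choice of \(T_{\mathrm{loc}}\) gives \(\Phi(\mathcal C)\subset\mathcal C\) and makes \(\Phi\) a contraction with constant \(1/2\), so there is a unique fixed point. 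The fixed point lies in \(C^1([0,T_{\mathrm{loc}}];X)\subset W^{1,\infty}([0,T_{\mathrm{loc}}];X)\) and satisfies the equation for every \(t\), so it is a strong solution.

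For uniqueness among all strong solutions, not only those staying in \(B_\rho\), I take another strong solution \(v\in W^{1,\infty}([0,T];X)\). Such a \(v\) is Lipschitz in time, hence continuous, and equals \(u_0+\int_0^t v'(s)\dd s\). I then compare on the maximal subinterval on which both solutions remain in some common ball \(B_{\rho'}\). There both satisfy the integral equation, so Gr\"onwall's inequality with the local Lipschitz constant \(L_{\rho'}\) gives \(u=v\). A continuity (open--closed) argument extends the agreement over the whole common interval. The step needing most care is this identification of \(W^{1,\infty}\) strong solutions with solutions of the integral equation. It uses the fact that a \(W^{1,\infty}\) Bochner function is absolutely continuous with derivative equal a.e. to the weak derivative. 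I will cite this standard fact rather than reprove it. The remaining estimates are routine.
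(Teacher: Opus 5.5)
Your proposal is correct and follows the same route as the paper. The paper's proof simply invokes the local Lipschitz bound of Lemma~\ref{lem:sec2-lipschitz} together with Picard--Lindel\"of for ODEs in Banach spaces, while you supply the details it leaves implicit: the explicit constants \(L_\rho\), \(K_\rho\), \(T_{\mathrm{loc}}\), the measurability check, and uniqueness within the \(W^{1,\infty}\) class via the absolute-continuity identity. The open--closed step in your uniqueness argument is unnecessary, since both solutions are continuous on a compact interval and therefore lie in a single ball \(B_{\rho'}\) throughout, so one Gr\"onwall estimate suffices.
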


\begin{proof}
By Lemma~\ref{lem:sec2-lipschitz}, \(\mathcal{V}_\varepsilon\) is locally Lipschitz on the Banach
space \(X\). Hence Picard--Lindel\"of for ODEs in Banach spaces yields local existence and uniqueness.
\end{proof}

 Lemmas~\ref{lem:sec2-lipschitz}--\ref{lem:sec2-a-bound} and
Proposition~\ref{prop:sec2-local} supply the local part of the theory. The next step is the
global comparison estimate: once the fast variable is uniformly controlled, two solutions can be
tracked against one another over finite time intervals, and the resulting Gr\"onwall argument is
exactly what drives both continuous dependence and the continuum-limit statement.

\begin{proposition}[Dissipative stability estimate on finite intervals]\label{prop:sec2-stability}
Assume Assumption~\ref{ass:sec2-main}\,\ref{ass:sec2-A1}. For \(\ell=1,2\), let
\(\omega^\ell\in\Linfty(\I)\), \((\theta_0^\ell,a_0^\ell)\in X\), and let
\((\theta^\ell,a^\ell)\) be strong solutions of \eqref{eq:sec2-continuum} on \([0,T]\) with
these data. Define
\[
\delta\theta:=\theta^1-\theta^2,\qquad
\delta a:=a^1-a^2,\qquad
\delta\omega:=\omega^1-\omega^2,
\]
and set
\[
R_a:=\max\{\|a_0^1\|_{\Linfty(\I^2)},\|a_0^2\|_{\Linfty(\I^2)},M_H\},
\qquad
\Lambda_a:=2R_aL_\Gamma+2M_\Gamma\Mdh.
\]
Then for every \(t\in[0,T]\),
\begin{equation}
\|\delta\theta(t)\|_{\Linfty(\I)}
\le
e^{\Lambda_a t}
\Big(
\|\delta\theta(0)\|_{\Linfty(\I)}
+t\|\delta\omega\|_{\Linfty(\I)}
+
\varepsilon M_\Gamma\|\delta a(0)\|_{\Linfty(\I^2)}
\Big).
\label{eq:sec2-stability-theta}
\end{equation}
Moreover, we conclude
\begin{equation}
\begin{aligned}
\|\delta a(t)\|_{\Linfty(\I^2)}
&\le
e^{-t/\varepsilon}\|\delta a(0)\|_{\Linfty(\I^2)}
\\
&\quad+
2\Mdh e^{\Lambda_a t}
\Big(
\|\delta\theta(0)\|_{\Linfty(\I)}
+t\|\delta\omega\|_{\Linfty(\I)}
+\varepsilon M_\Gamma\|\delta a(0)\|_{\Linfty(\I^2)}
\Big).
\end{aligned}
\label{eq:sec2-stability-a}
\end{equation}
Consequently, it follows that
\begin{equation}
\begin{aligned}
&\|\delta\theta(t)\|_{\Linfty(\I)}
+
\|\delta a(t)\|_{\Linfty(\I^2)}
\\
&\quad\le
e^{-t/\varepsilon}\|\delta a(0)\|_{\Linfty(\I^2)}
\\
&\quad+
(1+2\Mdh)e^{\Lambda_a t}
\Big(
\|\delta\theta(0)\|_{\Linfty(\I)}
+t\|\delta\omega\|_{\Linfty(\I)}
+\varepsilon M_\Gamma\|\delta a(0)\|_{\Linfty(\I^2)}
\Big).
\end{aligned}
\label{eq:sec2-stability}
\end{equation}
\end{proposition}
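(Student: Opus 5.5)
The plan is to exploit the two structural features already isolated in Lemmas~\ref{lem:sec2-lipschitz} and~\ref{lem:sec2-a-bound}. First write the fast difference explicitly through the variation-of-constants formula. Then feed it into the integral form of the slow difference, so that the initial mismatch \(\delta a(0)\) enters only through the integrable kernel \(e^{-s/\varepsilon}\). This is what produces the factor \(\varepsilon M_\Gamma\) instead of a constant that blows up like \(\varepsilon^{-1}\). The main obstacle is precisely here: a naive Gr\"onwall argument on the full vector field \(\mathcal V_\varepsilon\) would carry the \(\varepsilon^{-1}\) Lipschitz constant of the fast equation, so the fast variable must be eliminated before Gr\"onwall is applied.

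\emph{Step 1 (a priori bounds).} By \eqref{eq:sec2-a-priori}, \(\|a^\ell(t)\|_{\Linfty(\I^2)}\le R_a\) for \(\ell=1,2\) and all \(t\in[0,T]\). Hence Lemma~\ref{lem:sec2-lipschitz} applies with this \(R_a\) uniformly in time.

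\emph{Step 2 (fast difference).} Subtracting the two instances of \eqref{eq:sec2-voc-a} gives, in \(X_a\),
\[
\delta a(t)=e^{-t/\varepsilon}\delta a(0)+\frac1\varepsilon\int_0^t e^{-(t-s)/\varepsilon}\bigl[H(\theta^1(s,\cdot),\theta^1(s,\cdot))-H(\theta^2(s,\cdot),\theta^2(s,\cdot))\bigr]\dd s .
\]
Introduce the nondecreasing function \(m(t):=\sup_{0\le r\le t}\|\delta\theta(r)\|_{\Linfty(\I)}\), which is finite and continuous because \(\theta^\ell\in W^{1,\infty}([0,T];X_\theta)\). Using \(|H(\cdot)-H(\cdot)|\le 2\Mdh\|\delta\theta(s)\|\) and \(\varepsilon^{-1}\int_0^t e^{-(t-s)/\varepsilon}\dd s\le1\), we obtain
\[
\|\delta a(t)\|_{\Linfty(\I^2)}\le e^{-t/\varepsilon}\|\delta a(0)\|_{\Linfty(\I^2)}+2\Mdh\, m(t). \tag{$*$}
\]

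\emph{Step 3 (slow difference and Gr\"onwall).} Since strong solutions are Lipschitz in time, we have \(\delta\theta(t)=\delta\theta(0)+\int_0^t[\delta\omega+\mathcal F(\theta^1,a^1)-\mathcal F(\theta^2,a^2)]\dd s\) in \(X_\theta\), where the second term uses the common operator part. By \eqref{eq:sec2-lip-F} and \((*)\), the integrand is bounded by
\[
\|\delta\omega\|+M_\Gamma e^{-s/\varepsilon}\|\delta a(0)\|+(2R_aL_\Gamma+2M_\Gamma\Mdh)\,m(s).
\]
Integrating, and using \(\int_0^t e^{-s/\varepsilon}\dd s\le\varepsilon\), the right-hand side is nondecreasing in \(t\). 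Hence the same bound holds for \(m(t)\):
\[
m(t)\le \|\delta\theta(0)\|+t\|\delta\omega\|+\varepsilon M_\Gamma\|\delta a(0)\|+\Lambda_a\int_0^t m(s)\dd s.
\]
Gr\"onwall's inequality with a nondecreasing inhomogeneity then gives \(m(t)\le e^{\Lambda_a t}(\cdots)\), which yields \eqref{eq:sec2-stability-theta}.

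\emph{Step 4 (conclusion).} Inserting this bound on \(m(t)\) into \((*)\) gives \eqref{eq:sec2-stability-a}. Adding the two estimates then gives \eqref{eq:sec2-stability}.
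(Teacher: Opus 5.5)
Your proof is correct and follows the paper's strategy. You subtract the variation-of-constants formulas, insert the fast difference into the integrated slow equation so that $\delta a(0)$ enters only through $\int_0^t e^{-s/\varepsilon}\dd s\le\varepsilon$, and apply Gr\"onwall with the same constant $\Lambda_a$. The only difference is bookkeeping: the paper integrates the fast convolution inequality in time via Fubini and applies Gr\"onwall to $\|\delta\theta(t)\|_{\Linfty(\I)}$ itself, using monotonicity only when reinserting into the fast estimate, whereas you bound the convolution pointwise by the running supremum $m(t)$ and apply Gr\"onwall to $m$.
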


\begin{proof}
By Lemma~\ref{lem:sec2-a-bound},
\(\|a^\ell(t)\|_{\Linfty(\I^2)}\le R_a\) for \(\ell=1,2\), hence Lemma~\ref{lem:sec2-lipschitz}
applies along both trajectories. Subtracting the two
variation-of-constants formulas from Lemma~\ref{lem:sec2-a-bound} and using the Lipschitz bound for \(H\) gives
\begin{equation}
\|\delta a(t)\|_{\Linfty}
\le
e^{-t/\varepsilon}\|\delta a(0)\|_{\Linfty}
+
\frac{2\Mdh}{\varepsilon}\int_0^t
e^{-(t-s)/\varepsilon}\|\delta\theta(s)\|_{\Linfty}\,\dd s.
\label{eq:sec2-delta-a-voc}
\end{equation}
Integrating \eqref{eq:sec2-delta-a-voc} in time and applying Fubini's theorem yields
\begin{equation}
\int_0^t \|\delta a(s)\|_{\Linfty}\,\dd s
\le
\varepsilon\|\delta a(0)\|_{\Linfty}
+
2\Mdh\int_0^t\|\delta\theta(s)\|_{\Linfty}\,\dd s,
\label{eq:sec2-delta-a-integral}
\end{equation}
because
\(\int_0^t e^{-s/\varepsilon}\,\dd s\le\varepsilon\) and
\(\int_r^t \varepsilon^{-1}e^{-(s-r)/\varepsilon}\,\dd s\le1\). Integrating only the slow equation and using
\eqref{eq:sec2-lip-F} gives
\begin{align*}
\|\delta\theta(t)\|_{\Linfty}
&\le \|\delta\theta(0)\|_{\Linfty}
+t\|\delta\omega\|_{\Linfty}
+\int_0^t
\Big(M_\Gamma\|\delta a(s)\|_{\Linfty}
+2R_aL_\Gamma\|\delta\theta(s)\|_{\Linfty}\Big)\,\dd s.
\end{align*}
Substituting \eqref{eq:sec2-delta-a-integral} into this bound gives
\[
\|\delta\theta(t)\|_{\Linfty}
\le
\|\delta\theta(0)\|_{\Linfty}
+t\|\delta\omega\|_{\Linfty}
+\varepsilon M_\Gamma\|\delta a(0)\|_{\Linfty}
+\Lambda_a\int_0^t\|\delta\theta(s)\|_{\Linfty}\,\dd s.
\]
The integral form of Gr\"onwall's inequality yields
\eqref{eq:sec2-stability-theta}. Inserting that bound into
\eqref{eq:sec2-delta-a-voc} and using the monotonicity of the right-hand side in time gives
\eqref{eq:sec2-stability-a}; summing the two estimates gives \eqref{eq:sec2-stability}.
\end{proof}

 At this point the two main theorems are short consequences of the preceding
estimates. Global well-posedness comes from combining local existence with the explicit uniform
bound on \(a\), while convergence is obtained by applying the same stability estimate to the
embedded finite-\(N\) trajectory and the continuum solution.

\begin{proof}[Proof of Theorem~\ref{thm:sec2-wp}]
Local existence and uniqueness follow from Proposition~\ref{prop:sec2-local}.
To prove global existence, fix a local solution on \([0,T_{\mathrm{loc}}]\).
Lemma~\ref{lem:sec2-a-bound} gives
\[
\|a(t)\|_{\Linfty(\I^2)}\le R_a^*:=\max\{\|a_0\|_{\Linfty(\I^2)},M_H\}
\]
for all times where the solution exists. Therefore
\[
\|\partial_t\theta(t)\|_{\Linfty(\I)}
\le \|\omega\|_{\Linfty(\I)} + M_\Gamma R_a^*,
\]
and therefore
\[
\|\theta(t)\|_{\Linfty(\I)}
\le \|\theta_0\|_{\Linfty(\I)}
+t\bigl(\|\omega\|_{\Linfty(\I)}+M_\Gamma R_a^*\bigr)
\]
on every interval on which the local solution exists. The Banach-space continuation criterion
therefore rules out finite-time blow-up, and the maximal existence time is infinite.

Uniqueness follows from \eqref{eq:sec2-stability} with identical data. To obtain the Lipschitz estimate on a ball of radius \(R\), apply
Proposition~\ref{prop:sec2-stability} to two solutions with data in that ball.
Then \(R_a\le \max\{R,M_H\}\), so
\[
\Lambda_a
\le
2L_\Gamma\max\{R,M_H\}+2M_\Gamma\Mdh.
\]
For every \(\varepsilon_0>0\), the right-hand side of
\eqref{eq:sec2-stability} is therefore bounded uniformly for
\(\varepsilon\in(0,\varepsilon_0]\) by replacing
\(\varepsilon M_\Gamma\) with \(\varepsilon_0M_\Gamma\). This gives the claimed finite-time
Lipschitz bound into \(C([0,T];X)\), uniformly for
\(\varepsilon\in(0,\varepsilon_0]\).
This proves the theorem.
\end{proof}

\begin{proof}[Proof of Theorem~\ref{thm:sec2-conv}]
For each \(N\), let \((\theta^N,a^N)\) denote the embedded step-field solution of
\eqref{eq:sec2-discrete}--\eqref{eq:sec2-step-embedding}, and let \((\theta,a)\) solve
\eqref{eq:sec2-continuum}. By Lemma~\ref{lem:sec2-stepfield-preservation},
\((\theta^N,a^N)\) is a strong solution of~\eqref{eq:sec2-continuum} with data
\((\theta_0^N,a_0^N,\omega^N)\). Hence we may apply Proposition~\ref{prop:sec2-stability} with
\((\theta^1,a^1,\omega^1)=(\theta^N,a^N,\omega^N)\) and
\((\theta^2,a^2,\omega^2)=(\theta,a,\omega)\). Using
\(R_a\le R_{a,*}\) in \eqref{eq:sec2-stability-theta} and
\eqref{eq:sec2-stability-a} gives the two componentwise bounds stated in the theorem, and hence
\[
\sup_{t\in[0,T]}
\|(\theta^N,a^N)(t)-(\theta,a)(t)\|_X
\le
E_a^N
+
(1+2\Mdh)e^{\Lambda_*T}
\bigl(E_\theta^N+TE_\omega^N+\varepsilon M_\Gamma E_a^N\bigr).
\]
Assumption~\ref{ass:sec2-main}\,\ref{ass:sec2-A4}
implies the right-hand side tends to zero, proving both the quantitative estimate and convergence.
The final uniform bound for \(\varepsilon\in(0,\varepsilon_0]\) follows by replacing
\(\varepsilon M_\Gamma\) with \(\varepsilon_0M_\Gamma\).
\end{proof}

\begin{remark}[Uniform finite-time control as \(\varepsilon\downarrow0\)]\label{rem:sec2-eps-uniformity}
Although the slow-time vector field contains the singular prefactor
\(\varepsilon^{-1}\), the fast equation is contractive. Proposition~\ref{prop:sec2-stability}
uses the variation-of-constants formula for \(a\) rather than the local Lipschitz constant of the
full vector field, and therefore yields finite-time \(C([0,T];X)\) bounds that are uniform for
\(\varepsilon\in(0,\varepsilon_0]\). The initial mismatch in the fast variable contributes only
through \(\varepsilon\|\delta a(0)\|_{\Linfty}\) to the slow component, reflecting its initial-layer
character; the full \(X\)-norm estimate still contains \(\|\delta a(0)\|_{\Linfty}\) because the
interval includes \(t=0\).
\end{remark}

\begin{corollary}[Explicit \(O(1/N)\) rate for Lipschitz data]\label{cor:sec2-rate}
Assume Assumption~\ref{ass:sec2-main}\,\ref{ass:sec2-A1}--\ref{ass:sec2-A3} and that \(\omega\) is Lipschitz on \(\I\) with constant \(L_\omega\), \(\theta_0\) is Lipschitz on \(\I\) with constant \(L_{\theta_0}\), and \(a_0\) is Lipschitz on \(\I^2\) with constant \(L_{a_0}\). Define the equal-cell step approximations \((\omega^N,\theta_0^N,a_0^N)\) by cell-evaluation at any chosen point of each cell. Then
\[
\|\omega^N-\omega\|_{\Linfty(\I)}
+\|\theta_0^N-\theta_0\|_{\Linfty(\I)}
+\|a_0^N-a_0\|_{\Linfty(\I^2)}
\le \frac{L_\omega+L_{\theta_0}+\sqrt{2}\,L_{a_0}}{N},
\]
and consequently, with \(\Lambda_*\) from Theorem~\ref{thm:sec2-conv},
\[
\sup_{t\in[0,T]}\|(\theta^N,a^N)(t)-(\theta,a)(t)\|_X
\le
\frac{
\sqrt{2}\,L_{a_0}
+
(1+2\Mdh)e^{\Lambda_*T}
\bigl(L_{\theta_0}+TL_\omega+\varepsilon M_\Gamma\sqrt{2}\,L_{a_0}\bigr)
}{N}.
\]
\end{corollary}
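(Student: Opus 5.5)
The plan is to prove the cellwise approximation bound first and then feed it into the quantitative estimate of Theorem~\ref{thm:sec2-conv}; no new dynamical argument is needed.

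\textbf{Step 1: the one-dimensional fields.} Let \(x\in I_i^N\) and let \(x_i^*\in\overline{I_i^N}\) be the chosen evaluation point, so that \(\omega^N(x)=\omega(x_i^*)\). Since \(\omega\) is Lipschitz, and hence has a continuous representative that we evaluate pointwise,
\[
|\omega^N(x)-\omega(x)|\le L_\omega|x_i^*-x|\le L_\omega/N.
\]
The same argument for \(\theta_0\) gives \(\|\theta_0^N-\theta_0\|_{\Linfty(\I)}\le L_{\theta_0}/N\).

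\textbf{Step 2: the kernel.} For \((x,y)\in I_i^N\times I_j^N\) with evaluation point \((x_i^*,y_j^*)\), the Euclidean distance is at most the diagonal of a square of side \(1/N\), namely \(\sqrt2/N\). This gives \(\|a_0^N-a_0\|_{\Linfty(\I^2)}\le \sqrt2L_{a_0}/N\), taking the Lipschitz constant of \(a_0\) with respect to the Euclidean norm on \(\I^2\). Summing the three bounds gives the first display.

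\textbf{Step 3: transfer to the dynamics.} Substitute \(E_\theta^N\le L_{\theta_0}/N\), \(E_\omega^N\le L_\omega/N\) and \(E_a^N\le\sqrt2L_{a_0}/N\) into the bound of Theorem~\ref{thm:sec2-conv},
\[
E_a^N+(1+2\Mdh)e^{\Lambda_*T}\bigl(E_\theta^N+TE_\omega^N+\varepsilon M_\Gamma E_a^N\bigr).
\]
This yields exactly the stated rate. The hypotheses of that theorem hold: Assumption~\ref{ass:sec2-main}\,\ref{ass:sec2-A4} follows from Steps 1 and 2, and \(\sup_N\|a_0^N\|_{\Linfty}\le\|a_0\|_{\Linfty}\) is finite, so \(R_{a,*}\) and hence \(\Lambda_*\) are well defined. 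With this choice, \(R_{a,*}=\max\{M_H,\|a_0\|_{\Linfty(\I^2)}\}\).

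\textbf{Expected obstacle.} The only delicate point is bookkeeping: the distance convention in \(\I^2\) (Euclidean rather than max-norm) is what produces the factor \(\sqrt2\), and the closed first cell \(I_1^N\) changes nothing. Everything else is direct substitution.
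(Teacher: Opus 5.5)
Your proposal is correct and follows the paper's proof. Both use cellwise Lipschitz bounds, with the Euclidean diameter \(\sqrt2/N\) of the square cells supplying the factor \(\sqrt2\), followed by direct substitution into the quantitative estimate of Theorem~\ref{thm:sec2-conv}; your added check that \(\sup_N\|a_0^N\|_{L^\infty(\I^2)}\le\|a_0\|_{L^\infty(\I^2)}\), so that \(R_{a,*}\) and \(\Lambda_*\) are finite and independent of \(N\), is a detail the paper leaves implicit.
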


\begin{proof}
On any cell \(I_i^N\) of length \(1/N\), the Lipschitz hypothesis on \(\omega\) gives \(|\omega(x)-\omega(x')|\le L_\omega/N\) for all \(x,x'\in I_i^N\); the same applies to \(\theta_0\) on \(\I\) and to \(a_0\) on the \(2\)-dimensional cell \(I_i^N\times I_j^N\) of diameter \(\sqrt{2}/N\). Cell-evaluation therefore gives the displayed step-approximation bound. Substituting into Theorem~\ref{thm:sec2-conv} yields the trajectory bound.
\end{proof}

 Section~\ref{sec:unreduced-model} therefore supplies more than a pair of existence
and step-approximation-based convergence statements. It identifies the
two structural inputs that will be reused later: the explicit relaxation formula
\eqref{eq:sec2-voc-a} for the fast variable, and the stability estimate
\eqref{eq:sec2-stability}, whose role reappears in the reduced setting of
Section~\ref{sec:reduced-model}. The continuum system \eqref{eq:sec2-continuum} is also the
starting point for the infinite-dimensional Lyapunov--Perron slow-manifold construction carried out in
Section~\ref{sec:infdim-reduction}.

\section{Continuum Limit of the Reduced Higher-Order Model}\label{sec:reduced-model}

This section treats the truncated reduce-first route of the comparison diagram:
start from the explicit first-order reduced finite-\(N\) phase dynamics obtained by Fenichel
reduction at the discrete level, then pass to \(N\to\infty\). Thus the continuum
limit taken in this section is the limit of the first-order truncation, not yet the continuum
limit of the exact finite-\(N\) reduced flow associated with a chosen slow-manifold graph. The
Lyapunov--Perron selected finite-\(N\) representative used for the exact comparison is brought back in Section~\ref{sec:identification},
where the \(O(\varepsilon^2)\) remainder is controlled uniformly on step fields. The analysis is structurally parallel to
Section~\ref{sec:unreduced-model}, but now the state space is the single slow variable
\(\theta\in\Linfty(\I)\), and the vector field carries the pairwise operator \(K\), the
first-order correction \(P\), and the triplet operator \(T\).

We flag the structural asymmetry with Section~\ref{sec:unreduced-model} explicitly. There we took the continuum limit of the \emph{exact} finite-\(N\) unreduced fast--slow system. Here we take the continuum limit of the \emph{first-order truncation} of the finite-\(N\) reduced system, not of the exact finite-\(N\) reduced flow associated with a chosen slow-manifold representative. This is by design: the exact finite-\(N\) slow-manifold graph carries an \(O(\varepsilon^2)\) remainder whose constant is not a priori uniform in \(N\) (Remark~\ref{rem:sec3-order}), and the cleanest route to a uniform-in-\(N\) bound is to construct the slow manifold first at the continuum level (Section~\ref{sec:infdim-reduction}) and then transfer the bound to step fields. The exact reduce-first vs continuum-first comparison, with uniform \(O(\varepsilon^2)\) graph remainder, is therefore deferred to Corollary~\ref{cor:sec5-uniform-order}.

 More precisely, we start from the explicit order-\(\varepsilon\) truncation of
the finite-\(N\) Fenichel-reduced phase dynamics, embed them in the dense-label continuum framework, and identify the corresponding
continuum reduced equation on \(\Linfty(\I)\). The section then proves global well-posedness for
that continuum dynamics and shows that the embedded finite-\(N\) truncated reduced trajectories 
converge to it on every finite time interval for data satisfying the
step-approximation hypothesis introduced below. Together with
Section~\ref{sec:unreduced-model}, this supplies the two continuum objects that must later be
compared in Section~\ref{sec:identification}.

 Relative to Section~\ref{sec:unreduced-model}, the analytical emphasis now shifts.
The fast variable has disappeared, so there is no longer a separate relaxation mechanism to
exploit; instead, the main task is to show that the reduced vector field built from \(K\), \(P\),
and especially the triplet term \(T\) is well behaved on \(\Linfty(\I)\). In other words, the
difficulty moves from coupled fast--slow dynamics to the structure of the nonlinear continuum
operators themselves.

\begin{remark}[Order of the Fenichel remainder]\label{rem:sec3-order}

The \(o(\varepsilon)\) remainder in the finite-dimensional paper~\cite{KuehnMurphy} was
sufficient for the purposes of that work: the main structural step there was a mixed-derivative
test showing that the triplet contribution cannot be absorbed into a pairwise representation,
and for that argument it is enough to know that the first-order triplet coefficient survives
after dividing by \(\varepsilon\) and sending \(\varepsilon\to 0\).

Under the finite-dimensional regularity hypotheses that give the required
\(C^2\)-control of a chosen finite-dimensional slow-manifold graph, one can sharpen the remainder
to \(O(\varepsilon^2)\) at each fixed \(N\). In the comparison below this chosen graph is the
Lyapunov--Perron selected representative; it has the same first-order expansion as the
finite-dimensional slow-manifold graph used in~\cite{KuehnMurphy}. Let
\(h_\varepsilon^N:\mathbb{T}^N\to\mathbb{R}^{N\times N}\) denote this representative, and write
\(h_\varepsilon^N=h_0^N+\varepsilon h_1^N+r_\varepsilon^N\). The expansion
\(h_\varepsilon^N=h_0^N+\varepsilon h_1^N+o(\varepsilon)\) in \(C^2(\mathbb{T}^N)\) proved there,
together with compactness of \(\mathbb{T}^N\), yields
\[
\|h_\varepsilon^N-h_0^N\|_{C^1}=O(\varepsilon),
\qquad
\|\DD h_\varepsilon^N-\DD h_0^N\|_{C^0}=O(\varepsilon).
\]
When this finite-dimensional torus graph is compared below with continuum step fields, we evaluate
it on the torus class \([\vartheta]\in\mathbb{T}^N\) of a chosen lift
\(\vartheta\in\mathbb{R}^N\). The periodicity assumptions on \(\Gamma\) and \(H\) ensure that the
resulting finite-dimensional vector field and graph are independent of the chosen lift.
Let \(f^N(\theta,a)_i=\omega_i^N+\tfrac1N\sum_{j=1}^N a_{ij}\Gamma(\theta_j-\theta_i)\) denote
the slow vector field of the unreduced network. Substituting the expansion into the invariance
equation \(-h_\varepsilon^N+h_0^N=\varepsilon\,\DD h_\varepsilon^N\,f^N(\theta,h_\varepsilon^N)\),
using the defining relation \(h_1^N=-\DD h_0^N\,f^N(\theta,h_0^N)\), and cancelling the
\(O(\varepsilon)\) contributions gives
\[
r_\varepsilon^N(\theta)
=
-\varepsilon\Big(
\DD h_\varepsilon^N\,f^N(\theta,h_\varepsilon^N)
-\DD h_0^N\,f^N(\theta,h_0^N)
\Big).
\]
The bracket is \(O(\varepsilon)\) uniformly in \(\theta\in\mathbb{T}^N\) by the \(C^1\)- and
\(C^0\)-bounds above and the smoothness of \(f^N\), so
\(\|r_\varepsilon^N\|_{C^0}=O(\varepsilon^2)\); the reduced vector field therefore carries an
\(O(\varepsilon^2)\) remainder at each fixed \(N\).

The implicit constant in this argument is not, however, a priori uniform in \(N\): the
\(C^1\)- and \(C^0\)-bounds on \(h_\varepsilon^N-h_0^N\) originate from the finite-dimensional
Fenichel theorem on \(\mathbb{T}^N\times\R^{N\times N}\), whose constants may depend on the
ambient dimension. Since the reduce-first route of the present section takes \(N\to\infty\) at
fixed \(\varepsilon\), such a uniform bound is required before the \(O(\varepsilon^2)\)
remainder can be passed through the dense-graph limit. A uniform bound is established in
Section~\ref{sec:identification} by transferring the continuum-level estimate of
Proposition~\ref{prop:sec5-h1} through a step-field embedding; see
Corollary~\ref{cor:sec5-uniform-order}.

\end{remark}

\subsection[Reduced Dynamics and the Continuum Equation]{Reduced dynamics and the continuum equation}

 We begin with the reduced finite-\(N\) model and its dense-label embedding, because
this is the most direct way to see which continuum terms have to appear in the reduce-first limit.
The key point is that the reduced dynamics now evolve on a single phase field, but their vector
field already contains both pairwise and triplet contributions. The continuum nonpairwise
character of the triplet operator is analysed later in Section~\ref{sec:identification}.

We work with the first-order truncated reduced vector field from the finite-dimensional paper~\cite{KuehnMurphy},
not with the exact finite-\(N\) reduced vector field associated with a chosen
slow-manifold graph. Equation~\eqref{eq:sec3-discrete-red}
should therefore be read as the first-order model whose continuum limit is analysed in this
section; the exact finite-\(N\) selected reduced vector field differs from it by a remainder that is
estimated uniformly only later in Corollary~\ref{cor:sec5-uniform-order}. For each
\(N\in\mathbb{N}\),
\begin{align}
\dot{\theta}_i^N
&= \omega_i^N
+ \frac{1}{N}\sum_{j=1}^N H(\theta_i^N,\theta_j^N)\Gamma(\theta_j^N-\theta_i^N)
+ \frac{\varepsilon}{N}\sum_{j=1}^N P_{ij}^N(\theta^N)
+ \frac{\varepsilon}{N^2}\sum_{j,k=1}^N T_{ijk}^N(\theta^N),
\label{eq:sec3-discrete-red}
\end{align}
where
\begin{align}
P_{ij}^N(\theta^N)
&:=
-\Gamma(\theta_j^N-\theta_i^N)
\Big(\partial_1H(\theta_i^N,\theta_j^N)\omega_i^N
+\partial_2H(\theta_i^N,\theta_j^N)\omega_j^N\Big),
\label{eq:sec3-Pij}\\
T_{ijk}^N(\theta^N)
&:=
-\Gamma(\theta_j^N-\theta_i^N)\partial_1H(\theta_i^N,\theta_j^N)H(\theta_i^N,\theta_k^N)\Gamma(\theta_k^N-\theta_i^N)
\nonumber\\
&\quad
-\Gamma(\theta_j^N-\theta_i^N)\partial_2H(\theta_i^N,\theta_j^N)H(\theta_j^N,\theta_k^N)\Gamma(\theta_k^N-\theta_j^N).
\label{eq:sec3-Tijk}
\end{align}

As in Section~\ref{sec:unreduced-model}, define step fields on \(\I\):
\[
\theta^N(t,x):=\theta_i^N(t)\quad (x\in I_i^N),
\qquad
\omega^N(x):=\omega_i^N\quad (x\in I_i^N).
\]
The \(1/N\)-normalisation in \eqref{eq:sec3-discrete-red} is the dense-interaction scaling.

 The dense-label embedding leaves only the slow variable \(\theta\) as a state
variable, so the ambient space is simpler than in Section~\ref{sec:unreduced-model}. The
analytical burden, however, has moved into the nonlinear operators themselves, and we therefore
record the function-space setting, hypotheses, and continuum vector field explicitly before stating
the main theorems.

Set
\[
X_{\mathrm{red}}:=\Linfty(\I),
\qquad
\|\theta\|_{X_{\mathrm{red}}}:=\|\theta\|_{\Linfty(\I)}.
\]

\begin{assumption}[Reduced-model hypotheses]\label{ass:sec3-main}
Fix \(\varepsilon>0\). Assume:
\begin{enumerate}[label=\textup{(B\arabic*)}, ref=\textup{(B\arabic*)}]
  \item\label{ass:sec3-B1} \(\Gamma\in C_b^1(\mathbb{R})\) is \(2\pi\)-periodic, and
  \(H\in C_b^2(\mathbb{R}^2)\) is \(2\pi\)-periodic in each argument.
  \item\label{ass:sec3-B2} \(\omega,\theta_0\in \Linfty(\I)\).
  \item\label{ass:sec3-B3} There exist step approximations \(\theta_0^N,\omega^N\) on the standard
  equal-cell partition such that
  \[
  \|\theta_0^N-\theta_0\|_{\Linfty(\I)}+\|\omega^N-\omega\|_{\Linfty(\I)}\to 0.
  \]
\end{enumerate}
\end{assumption}

\begin{remark}[Uniform frequency bound from the step hypothesis]\label{rem:sec3-step-frequency-bound}
Since \(\omega^N\to\omega\) in \(L^\infty(\I)\) by
Assumption~\ref{ass:sec3-main}\,\ref{ass:sec3-B3}, the sequence \((\omega^N)_N\) is bounded in
\(L^\infty(\I)\). Hence
\[
M_\omega^{\mathrm{step}}
:=
\max\Bigl\{\|\omega\|_{\Linfty(\I)},\sup_{N\ge 1}\|\omega^N\|_{\Linfty(\I)}\Bigr\}
<\infty.
\]
We use \(M_\omega^{\mathrm{step}}\) whenever constants must be uniform in \(N\).
\end{remark}

 The continuum equation defined below is posed on all of \(X_{\mathrm{red}}\), but
the discrete-to-continuum theorem below is restricted by Assumption~\ref{ass:sec3-main}\,\ref{ass:sec3-B3} to
data admitting \(L^\infty\)-convergent equal-cell step approximations. Sufficient classes for Assumption~\ref{ass:sec3-main}\,\ref{ass:sec3-B3} include continuous data on \(\I\), and more generally \(L^\infty\) data admitting a uniformly continuous representative on the standard equal-cell partitions. Lipschitz data give an explicit \(O(1/N)\) approximation rate, which propagates to the discrete-to-continuum bound; see Corollary~\ref{cor:sec3-rate} below.

Define operators \(K,P,T:X_{\mathrm{red}}\to X_{\mathrm{red}}\) (with \(P\) also depending on \(\omega\)):
\begin{align}
K[\theta](x)
&:= \int_0^1 H\!\big(\theta(x),\theta(y)\big)\Gamma\!\big(\theta(y)-\theta(x)\big)\,\dd y,
\label{eq:sec3-Kop}\\
P[\theta;\omega](x)
&:= -\int_0^1 \Gamma\!\big(\theta(y)-\theta(x)\big)
\Big(\partial_1H\!\big(\theta(x),\theta(y)\big)\omega(x)
+\partial_2H\!\big(\theta(x),\theta(y)\big)\omega(y)\Big)\,\dd y,
\label{eq:sec3-Pop}\\
T[\theta](x)
&:= -\int_0^1\!\int_0^1
\Gamma\!\big(\theta(y)-\theta(x)\big)\partial_1H\!\big(\theta(x),\theta(y)\big)
H\!\big(\theta(x),\theta(z)\big)\Gamma\!\big(\theta(z)-\theta(x)\big)\,\dd z\,\dd y
\nonumber\\
&\quad
-\int_0^1\!\int_0^1
\Gamma\!\big(\theta(y)-\theta(x)\big)\partial_2H\!\big(\theta(x),\theta(y)\big)
H\!\big(\theta(y),\theta(z)\big)\Gamma\!\big(\theta(z)-\theta(y)\big)\,\dd z\,\dd y.
\label{eq:sec3-Top}
\end{align}
The continuum equation associated with this first-order truncated reduce-first model is
\begin{equation}
\partial_t\theta = \omega + K[\theta] + \varepsilon P[\theta;\omega] + \varepsilon T[\theta],
\qquad
\theta|_{t=0}=\theta_0.
\label{eq:sec3-continuum-red}
\end{equation}

 The three operators play distinct roles. The term \(K\) is the leading-order
pairwise continuum interaction, \(P\) is the first-order pairwise correction weighted by the
frequencies, and \(T\) is the first-order triplet contribution. Writing the reduced equation in
this decomposed form makes clear which pieces must later be matched by the continuum-first
reduction and tested for continuum nonpairwise representability in
Section~\ref{sec:identification}.

\begin{definition}[Strong reduced solution on finite horizon]
A function \(\theta\) is a strong solution of \eqref{eq:sec3-continuum-red} on \([0,T]\) if
\(\theta\in W^{1,\infty}([0,T];X_{\mathrm{red}})\) and
\eqref{eq:sec3-continuum-red} holds for a.e. \(t\in[0,T]\).
\end{definition}

\begin{lemma}[Step-field invariance of the truncated reduced continuum flow]
\label{lem:sec3-stepfield-preservation}
Fix \(N\in\mathbb N\) and let \(\omega=\omega^N\in \mathfrak{S}_N^\theta\). If
\(\theta_0\in \mathfrak{S}_N^\theta\), then the unique strong solution (provided by Theorem~\ref{thm:sec3-wp} below) of
\eqref{eq:sec3-continuum-red} with data \((\theta_0,\omega)\) satisfies
\(\theta(t)\in \mathfrak{S}_N^\theta\) for all \(t\ge 0\), and its step-coefficients solve
\eqref{eq:sec3-discrete-red}.
\end{lemma}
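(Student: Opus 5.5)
The plan mirrors the proof of Lemma~\ref{lem:sec2-stepfield-preservation}. First I check that the reduced vector field
\[
\mathcal R_\omega[\theta]:=\omega+K[\theta]+\varepsilon P[\theta;\omega]+\varepsilon T[\theta]
\]
maps $\mathfrak S_N^\theta$ into itself when $\omega=\omega^N\in\mathfrak S_N^\theta$. I then show that, on step fields, it reproduces the right-hand side of \eqref{eq:sec3-discrete-red}.

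Take $\theta\in\mathfrak S_N^\theta$ with representative $\theta=\theta_i$ a.e. on $I_i^N$, and fix $x\in I_i^N$. Split each integral in \eqref{eq:sec3-Kop}--\eqref{eq:sec3-Top} into the cells $I_j^N$ (in $y$) and $I_k^N$ (in $z$). On each such cell the integrand is a.e. constant, and the constant depends only on $(i,j)$ or $(i,j,k)$.

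For $K$, this gives $K[\theta](x)=\frac1N\sum_j H(\theta_i,\theta_j)\Gamma(\theta_j-\theta_i)$. For $P$, using also $\omega(x)=\omega_i^N$ and $\omega(y)=\omega_j^N$, it gives $\frac1N\sum_j P^N_{ij}(\theta)$ with $P^N_{ij}$ as in \eqref{eq:sec3-Pij}.

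For $T$, the cell $I_j^N\times I_k^N$ has measure $N^{-2}$. In the first summand the integrand depends on $(\theta(x),\theta(y),\theta(z))$. In the second it depends on $(\theta(x),\theta(y),\theta(z))$ through $H(\theta(y),\theta(z))\Gamma(\theta(z)-\theta(y))$. Both are constant on the cell, with values matching the two lines of \eqref{eq:sec3-Tijk}. Hence $T[\theta](x)=\frac1{N^2}\sum_{j,k}T^N_{ijk}(\theta)$.

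Each resulting value depends only on $i$, so $\mathcal R_{\omega^N}$ maps $\mathfrak S_N^\theta$ to $\mathfrak S_N^\theta$. Under the identification $\mathfrak S_N^\theta\simeq\R^N$, its restriction is exactly the vector field of \eqref{eq:sec3-discrete-red}. The only care needed is measurability: modifying $\theta$ on a null set changes the integrands only on null sets of $\I^2$ or $\I^3$, by Fubini. So the formulas are well defined a.e. in $x$.

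Next comes the flow invariance. The finite-dimensional ODE \eqref{eq:sec3-discrete-red} has a $C^1$ right-hand side, since $\Gamma\in C^1_b$ and $H\in C^2_b$ make $\partial_rH$ of class $C^1$. It is globally bounded in $\theta$ for fixed $\omega^N$, because all factors are bounded. Picard--Lindel\"of and this boundedness give a unique global solution $(\theta_i(t))$. Its step embedding $\tilde\theta(t)\in\mathfrak S_N^\theta$ is then Lipschitz in time with values in $X_{\mathrm{red}}$, since the embedding $\R^N\to\Linfty(\I)$ is an isometry for the max norm. By the identity above, $\tilde\theta$ satisfies \eqref{eq:sec3-continuum-red} for every $t$, so $\tilde\theta\in W^{1,\infty}([0,T];X_{\mathrm{red}})$ is a strong solution on every $[0,T]$ with data $(\theta_0,\omega^N)$.

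The uniqueness part of Theorem~\ref{thm:sec3-wp} then identifies $\tilde\theta$ with the given solution $\theta$. This yields $\theta(t)\in\mathfrak S_N^\theta$ for all $t\ge0$, with step coefficients solving \eqref{eq:sec3-discrete-red}.

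The main point needing attention is the triplet bookkeeping. I have to confirm that the second term of $T$ really collapses to $\partial_2H(\theta_i,\theta_j)H(\theta_j,\theta_k)\Gamma(\theta_k-\theta_j)$, that is, that the inner $z$-integral depends on $y$ only through its cell. The rest is routine and parallels Section~\ref{sec:unreduced-model}.
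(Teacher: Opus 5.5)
Your proposal is correct and follows the same route as the paper. You evaluate $K$, $P$, $T$ cell by cell on step fields, which shows that the reduced vector field is tangent to $\mathfrak S_N^\theta$ and coincides there with the right-hand side of the finite-$N$ system; you then apply Picard--Lindel\"of on the finite-dimensional subspace. The only difference is that you spell out the identification step the paper leaves implicit: you embed the finite-$N$ solution as a $W^{1,\infty}$ strong solution of the continuum equation and invoke the uniqueness part of Theorem~\ref{thm:sec3-wp}.
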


\begin{proof}
For \(x\in I_i^N\), the operators \(K[\theta](x)\), \(P[\theta;\omega](x)\), and
\(T[\theta](x)\) are constant on \(I_i^N\) whenever \(\theta,\omega\in \mathfrak{S}_N^\theta\). Indeed,
their integrals over cells \(I_j^N\) and \(I_k^N\) reduce exactly to the \(1/N\)- and
\(1/N^2\)-weighted sums in \eqref{eq:sec3-discrete-red}. Hence the reduced continuum vector
field is tangent to \(\mathfrak{S}_N^\theta\), and Picard--Lindel\"of on the finite-dimensional subspace
identifies the restricted flow with the finite-\(N\) reduced ODE.
\end{proof}

 With the reduced continuum equation now fixed, the two main results can be stated
succinctly. The first theorem gives global well-posedness in \(X_{\mathrm{red}}\), and the second
shows that the dense-label embedding of the finite-\(N\) reduced dynamics converges to this
continuum equation on finite time intervals for data satisfying
Assumption~\ref{ass:sec3-main}\,\ref{ass:sec3-B3}.

\begin{theorem}[Well-posedness of reduced continuum dynamics]\label{thm:sec3-wp}
Under Assumption~\ref{ass:sec3-main}\,\ref{ass:sec3-B1}--\ref{ass:sec3-B2}, equation~\eqref{eq:sec3-continuum-red}
has a unique global strong solution for every \(\varepsilon>0\).
Moreover, for each \(T>0\) and \(R>0\), the map
\((\theta_0,\omega)\mapsto \theta\) is Lipschitz on the \(R\)-ball in
\(\Linfty(\I)\times\Linfty(\I)\), as a map into \(C([0,T];X_{\mathrm{red}})\).
\end{theorem}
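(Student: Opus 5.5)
The plan is to treat \eqref{eq:sec3-continuum-red} as an autonomous ODE \(\partial_t\theta=\mathcal R_\varepsilon(\theta;\omega):=\omega+K[\theta]+\varepsilon P[\theta;\omega]+\varepsilon T[\theta]\) on the Banach space \(X_{\mathrm{red}}=\Linfty(\I)\). The argument then follows the same sequence as Section~\ref{sec:unreduced-model}: Lipschitz bounds for the operators, then local existence, then global continuation, then a Gr\"onwall stability estimate. It is in fact simpler, because there is no fast variable.

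\textbf{Step 1: boundedness and global Lipschitz bounds.} Each integrand in \eqref{eq:sec3-Kop}--\eqref{eq:sec3-Top} is a product of bounded functions: \(\Gamma\), \(H\), \(\partial_rH\), each evaluated at \(\theta(x),\theta(y),\theta(z)\) or at their differences. By \ref{ass:sec3-B1}, each factor is bounded and globally Lipschitz in its arguments. Here \(\Gamma\in C_b^1\), and \(H\in C_b^2\) is what gives Lipschitz control of \(\partial_rH\) through \(\MdTwoH\).

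For \(K\) I add and subtract factor by factor, exactly as in the proof of Lemma~\ref{lem:sec2-lipschitz}, and take the essential supremum in \(x\). This gives
\[
\|K[\theta_1]-K[\theta_2]\|_{\Linfty}\le 2(M_HL_\Gamma+M_\Gamma\Mdh)\|\theta_1-\theta_2\|_{\Linfty}.
\]
The same telescoping applies to the four-factor products in \(T\). It gives a constant depending only on \(M_\Gamma,L_\Gamma,M_H,\Mdh,\MdTwoH\), so the bound is \emph{global} in \(\theta\).

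For \(P\) the estimate is
\[
\|P[\theta_1;\omega_1]-P[\theta_2;\omega_2]\|_{\Linfty}\le C(1+\|\omega_1\|_{\Linfty})\|\theta_1-\theta_2\|_{\Linfty}+2M_\Gamma\Mdh\|\omega_1-\omega_2\|_{\Linfty}.
\]
Thus \(\mathcal R_\varepsilon(\cdot;\omega)\) is globally Lipschitz on \(X_{\mathrm{red}}\), and
\[
\|\mathcal R_\varepsilon(\theta;\omega)\|\le\|\omega\|+M_HM_\Gamma+\varepsilon(2M_\Gamma\Mdh\|\omega\|+2M_\Gamma^2M_H\Mdh).
\]
A minor point needs checking: these integrals define measurable \(\Linfty\) functions of \(x\). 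This holds by Fubini, since the integrands are bounded and jointly measurable.

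\textbf{Step 2: existence and uniqueness.} Because the vector field is globally Lipschitz, Picard--Lindel\"of in Banach spaces gives a unique \(C^1\) solution on any \([0,T]\). Alternatively one can combine local existence with the linear growth bound \(\|\theta(t)\|\le\|\theta_0\|+tC_\omega\) to rule out blow-up. The solution is then in \(W^{1,\infty}([0,T];X_{\mathrm{red}})\).

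\textbf{Step 3: Lipschitz dependence on the data.} For two data \((\theta_0^\ell,\omega^\ell)\) in the \(R\)-ball, I integrate the difference of the equations and use Step~1 with \(\|\omega^\ell\|\le R\):
\[
\|\delta\theta(t)\|\le\|\delta\theta_0\|+t(1+2\varepsilon M_\Gamma\Mdh)\|\delta\omega\|+L_R\int_0^t\|\delta\theta(s)\|\dd s.
\]
Gr\"onwall then yields \(\sup_{[0,T]}\|\delta\theta\|\le e^{L_RT}(\|\delta\theta_0\|+T(1+2\varepsilon M_\Gamma\Mdh)\|\delta\omega\|)\).

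\textbf{Main obstacle.} The only step that needs real care is the Lipschitz bound for the triplet operator \(T\). Its second summand couples \(\theta(x),\theta(y),\theta(z)\) through \(\partial_2H(\theta(x),\theta(y))H(\theta(y),\theta(z))\Gamma(\theta(z)-\theta(y))\). I will handle it by a four-term telescoping, in which each difference contributes at most \(2\times\)(Lipschitz constant)\(\times\)(product of the other sup bounds). The resulting constant is \(4M_\Gamma M_H(L_\Gamma\Mdh)+\dots\), explicitly uniform in \(\theta\).
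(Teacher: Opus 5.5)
Your proposal is correct and follows essentially the same route as the paper. You give operator bounds and Lipschitz estimates for \(K\), \(P\), \(T\); your constants, including the splitting of \(P\) in \(\theta\) and \(\omega\) and the four-factor telescoping for \(T\), match Lemma~\ref{lem:sec3-op-bounds}. You then use Picard--Lindel\"of with the linear-growth bound for global existence, and a Gr\"onwall estimate with \(M_\omega\le R\) for Lipschitz dependence. The only cosmetic difference is that you observe the vector field is globally Lipschitz for fixed \(\omega\), so Picard--Lindel\"of applies on any \([0,T]\) directly, whereas the paper argues local existence plus continuation; both are valid.
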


\begin{theorem}[Truncated reduced discrete-to-continuum convergence under the step-approximation hypothesis]\label{thm:sec3-conv}
Assume Assumption~\ref{ass:sec3-main}. Let \(\theta^N\) be the step-field solution induced by
\eqref{eq:sec3-discrete-red} with initial datum \(\theta_0^N\) and frequency field \(\omega^N\)
from Assumption~\ref{ass:sec3-main}\,\ref{ass:sec3-B3}, and let \(\theta\) solve
\eqref{eq:sec3-continuum-red}.
Then for every fixed \(T>0\),
\[
\sup_{t\in[0,T]}\|\theta^N(t)-\theta(t)\|_{\Linfty(\I)}\to 0
\qquad (N\to\infty).
\]
More precisely, with constants from Proposition~\ref{prop:sec3-stability},
\[
\sup_{t\in[0,T]}\|\theta^N(t)-\theta(t)\|_{\Linfty(\I)}
\le C_T^{\mathrm{red}}
\Big(\|\theta_0^N-\theta_0\|_{\Linfty(\I)}+\|\omega^N-\omega\|_{\Linfty(\I)}\Big),
\]
where \(C_T^{\mathrm{red}}\) is independent of \(N\).
\end{theorem}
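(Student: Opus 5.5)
The plan follows the template of Theorem~\ref{thm:sec2-conv}: recast the embedded finite-\(N\) dynamics as a continuum solution with step data, then apply a stability estimate between two continuum solutions. By Lemma~\ref{lem:sec3-stepfield-preservation}, the step field \(\theta^N\) induced by \eqref{eq:sec3-discrete-red} is exactly the strong solution of \eqref{eq:sec3-continuum-red} with data \((\theta_0^N,\omega^N)\). Both \(\theta^N\) and \(\theta\) therefore solve the same continuum equation, with different initial data and frequency profiles. It then suffices to prove the finite-interval stability estimate, Proposition~\ref{prop:sec3-stability}, for two solutions \(\theta^1,\theta^2\) of \eqref{eq:sec3-continuum-red} with data \((\theta_0^\ell,\omega^\ell)\):
\[
\|\theta^1(t)-\theta^2(t)\|_{\Linfty}\le e^{L t}\big(\|\theta^1_0-\theta^2_0\|_{\Linfty}+t\,C_\omega\|\omega^1-\omega^2\|_{\Linfty}\big),
\]
with \(L\) and \(C_\omega\) depending only on \(M_\Gamma,L_\Gamma,M_H,\Mdh,\MdTwoH\), \(\varepsilon\), and a bound on \(\|\omega^\ell\|_{\Linfty}\).

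The core step is a set of global Lipschitz bounds on \(\Linfty(\I)\) for \(K\), \(P\) and \(T\), proved exactly as in Lemma~\ref{lem:sec2-lipschitz}. The argument is to add and subtract inside the integrands and use the product rule for differences. Each factor is bounded (by \(M_\Gamma\), \(M_H\) or \(\Mdh\)) and is Lipschitz in \(\theta\), with constant \(L_\Gamma\), \(\Mdh\) or \(\MdTwoH\); the last requires \(H\in C_b^2\) from \ref{ass:sec3-B1}. Each argument difference such as \(\theta(y)-\theta(x)\) costs at most \(2\|\delta\theta\|_{\Linfty}\). This gives the following bounds:
\begin{itemize}
\item \(\|K[\theta_1]-K[\theta_2]\|\le 2(\Mdh M_\Gamma+M_HL_\Gamma)\|\delta\theta\|\);
\item \(\|T[\theta_1]-T[\theta_2]\|\le C_T\|\delta\theta\|\), where \(C_T\) is a sum over the four factors of each triple integrand;
\item \(\|P[\theta_1;\omega^1]-P[\theta_2;\omega^2]\|\le C_P(\max_\ell\|\omega^\ell\|)\|\delta\theta\|+2M_\Gamma\Mdh\|\delta\omega\|\).
\end{itemize}
The \(\omega\)-dependence of \(P\) enters linearly, so the \(\theta\)-Lipschitz constant of \(P\) scales with \(\|\omega\|_{\Linfty}\). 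The triple integrals pose no difficulty, because \(\Linfty\) bounds pass through \(\int\!\int\dd z\,\dd y\) over a probability space.

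These bounds feed directly into the integral form of \eqref{eq:sec3-continuum-red}. Subtracting the two equations and integrating from \(0\) to \(t\) gives
\[
\|\delta\theta(t)\|\le\|\delta\theta(0)\|+t(1+2\varepsilon M_\Gamma\Mdh)\|\delta\omega\|+L\int_0^t\|\delta\theta(s)\|\dd s,
\]
with \(L=\mathrm{Lip}(K)+\varepsilon(C_P+C_T)\). Grönwall's inequality then yields the stability estimate. The same global Lipschitz property also proves Theorem~\ref{thm:sec3-wp} via Picard--Lindelöf without any continuation argument. Applying the estimate with \((\theta^1,\omega^1)=(\theta^N,\omega^N)\) and \((\theta^2,\omega^2)=(\theta,\omega)\) gives
\[
C_T^{\mathrm{red}}=e^{LT}\max\{1,T(1+2\varepsilon M_\Gamma\Mdh)\}.
\]
By Assumption~\ref{ass:sec3-main}\,\ref{ass:sec3-B3}, the right-hand side tends to zero.

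The main obstacle is making \(C_T^{\mathrm{red}}\) independent of \(N\). The only \(N\)-dependent input is \(\|\omega^N\|_{\Linfty}\), which enters through \(C_P\). I would handle it with Remark~\ref{rem:sec3-step-frequency-bound}: evaluate the Lipschitz constant of \(P\) at \(M_\omega^{\mathrm{step}}\), which is uniform in \(N\). Beyond this, the remaining work is careful bookkeeping of the constants in the Lipschitz bound for \(T\), since its integrand involves four factors.
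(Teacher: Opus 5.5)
Your proposal is correct and is essentially the paper's own argument. Step-field invariance (Lemma~\ref{lem:sec3-stepfield-preservation}) makes \(\theta^N\) a continuum solution with data \((\theta_0^N,\omega^N)\); the Lipschitz bounds of Lemma~\ref{lem:sec3-op-bounds} feed the Gr\"onwall estimate of Proposition~\ref{prop:sec3-stability}; and \(N\)-uniformity comes from evaluating the \(\omega\)-dependent constants at \(M_\omega^{\mathrm{step}}\) (Remark~\ref{rem:sec3-step-frequency-bound}), giving the same \(C_T^{\mathrm{red}}=e^{\Lambda_\varepsilon T}\max\{1,TB_\varepsilon\}\) with \(B_\varepsilon=1+2\varepsilon M_\Gamma\Mdh\). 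Your side remark also holds: the Lipschitz bounds are global in \(\theta\), so Picard--Lindel\"of gives global existence directly, without the paper's continuation step.
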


 The rest of the section mirrors the logic of Section~\ref{sec:unreduced-model}, but
with the operator structure now taking centre stage. We first prove boundedness and Lipschitz
estimates for \(K\), \(P\), and \(T\) on bounded subsets of \(\Linfty(\I)\). Once these estimates
are in place, Picard--Lindel\"of and the uniform bound on the reduced vector field yield global
well-posedness. A final comparison estimate, obtained by applying Gr\"onwall's
inequality to the difference of two solutions, then yields both Lipschitz dependence on the data and
the discrete-to-continuum convergence statement.

\subsection[Operator Bounds, Stability, and Convergence]{Operator bounds, stability, and convergence}

 We now turn to the proof in one continuous chain. First we establish the operator
bounds for \(K\), \(P\), and \(T\); next we use those bounds to place the reduced equation in the
standard Banach-space ODE framework and obtain global well-posedness; finally we compare two
solutions on a fixed time interval to derive continuous dependence and the discrete-to-continuum
estimate.

 We therefore begin with the operator bounds, since they are the common input for the
entire section: they control the nonlinear reduced vector field, rule out finite-time blow-up, and
ultimately allow the discrete and continuum dynamics to be compared on a fixed time horizon.

The constants \(M_\Gamma,L_\Gamma,M_H,\Mdh\) are those already defined in
Section~\ref{sec:unreduced-model}. The higher regularity of \(H\) requires in addition the
second-derivative bound
\[
\MdTwoH:=\max_{r,s=1,2}\|\partial_{rs}^2H\|_{\Linfty(\mathbb{R}^2)}.
\]

\begin{lemma}[Bounds for reduced operators]\label{lem:sec3-op-bounds}
Let \(M_\omega>0\). There exist constants
\(C_K,C_P^{\theta},C_P^{\omega},C_T^{\theta}>0\), depending only on
\(M_\omega\), \(M_\Gamma\), \(L_\Gamma\), \(M_H\), \(\Mdh\), \(\MdTwoH\), such that
for all \(\theta_1,\theta_2,\omega_1,\omega_2\in \Linfty(\I)\) with
\(\|\omega_\ell\|_{\Linfty(\I)}\le M_\omega\), \(\ell=1,2\),
\begin{align}
\|K[\theta_1]-K[\theta_2]\|_{\Linfty(\I)}
&\le C_K\|\theta_1-\theta_2\|_{\Linfty(\I)},
\label{eq:sec3-lip-K}\\
\|P[\theta_1;\omega_1]-P[\theta_2;\omega_2]\|_{\Linfty(\I)}
&\le C_P^{\theta}\|\theta_1-\theta_2\|_{\Linfty(\I)}
+C_P^{\omega}\|\omega_1-\omega_2\|_{\Linfty(\I)},
\label{eq:sec3-lip-P}\\
\|T[\theta_1]-T[\theta_2]\|_{\Linfty(\I)}
&\le C_T^{\theta}\|\theta_1-\theta_2\|_{\Linfty(\I)}.
\label{eq:sec3-lip-T}
\end{align}
Moreover,
\begin{align}
\|K[\theta]\|_{\Linfty(\I)} &\le M_HM_\Gamma,
\label{eq:sec3-bd-K}\\
\|P[\theta;\omega]\|_{\Linfty(\I)} &\le 2M_\Gamma \Mdh\|\omega\|_{\Linfty(\I)},
\label{eq:sec3-bd-P}\\
\|T[\theta]\|_{\Linfty(\I)} &\le 2M_H\Mdh M_\Gamma^2.
\label{eq:sec3-bd-T}
\end{align}
\end{lemma}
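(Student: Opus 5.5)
The plan is to treat all three operators the same way as \(\mathcal F_\omega\) in Lemma~\ref{lem:sec2-lipschitz}. Each operator is an integral, over \(y\) or over \((y,z)\in\I^2\), of a product of factors. Every factor is either bounded by one of \(M_\Gamma,M_H,\Mdh,\|\omega\|_{\Linfty}\), or Lipschitz in the phase arguments with a constant among \(L_\Gamma,\Mdh,\MdTwoH\). The sup bounds \eqref{eq:sec3-bd-K}--\eqref{eq:sec3-bd-T} come first, by estimating the integrand pointwise and integrating over a set of measure one.

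For \(K\) the integrand is bounded by \(M_HM_\Gamma\). For \(P\), each of the two terms is at most \(M_\Gamma\Mdh\|\omega\|_{\Linfty}\), which gives \(2M_\Gamma\Mdh\|\omega\|_{\Linfty}\). For \(T\), each of the two double integrals is bounded by \(M_\Gamma\Mdh M_HM_\Gamma\), which gives \(2M_H\Mdh M_\Gamma^2\). All of these hold pointwise for a.e. \(x\), so taking the essential supremum finishes this part.

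For the Lipschitz estimates I will use a telescoping product rule. If \(u_1\cdots u_m\) and \(v_1\cdots v_m\) are products with \(|u_r|,|v_r|\le B_r\), then
\[
|u_1\cdots u_m - v_1\cdots v_m| \le \sum_{r=1}^m \Bigl(\prod_{s\neq r}B_s\Bigr)|u_r-v_r|.
\]
Each factor difference is bounded pointwise by a constant times \(\|\theta_1-\theta_2\|_{\Linfty}\), as follows.
\begin{itemize}
\item A factor \(\Gamma(\theta(y)-\theta(x))\) changes by at most \(2L_\Gamma\|\delta\theta\|_{\Linfty}\).
\item A factor \(H(\theta(x),\theta(y))\) changes by at most \(2\Mdh\|\delta\theta\|_{\Linfty}\).
\item A factor \(\partial_rH\) changes by at most \(2\MdTwoH\|\delta\theta\|_{\Linfty}\); this is where \(H\in C_b^2\) from \ref{ass:sec3-B1} enters.
\end{itemize}
These bounds hold for a.e. \((x,y,z)\), with the obvious relabelling for the factors depending on \((x,z)\) or \((y,z)\).

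Applying this gives the following constants.
\begin{itemize}
\item For \(K\): \(C_K=2L_\Gamma M_H+2M_\Gamma\Mdh\).
\item For \(T\): each summand has four factors, so \(C_T^\theta\) is a sum of four products. One example is \(2\bigl(L_\Gamma\Mdh M_HM_\Gamma+M_\Gamma\MdTwoH M_HM_\Gamma+M_\Gamma\Mdh\Mdh M_\Gamma+M_\Gamma\Mdh M_HL_\Gamma\bigr)\) per summand, doubled for the two summands.
\item For \(P\): \(\omega\) changes as well, so I split off the \(\omega\)-difference first. Adding and subtracting \(\omega_2\) in place of \(\omega_1\) gives \(C_P^\omega=2M_\Gamma\Mdh\), since both \(\omega(x)\) and \(\omega(y)\) are controlled by \(\|\delta\omega\|_{\Linfty}\). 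The remaining \(\theta\)-difference, with \(\omega_2\) fixed, is bounded using \(\|\omega_2\|\le M_\omega\), giving \(C_P^\theta=2M_\omega(2L_\Gamma\Mdh+2M_\Gamma\MdTwoH)\).
\end{itemize}

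There is one measurability point. Compositions such as \(H(\theta(x),\theta(z))\) are jointly measurable and essentially bounded on \(\I^3\), so Fubini applies and the operators are well defined as elements of \(\Linfty(\I)\). This is the same reasoning as in Section~\ref{sec:unreduced-model}.

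No step is a genuine obstacle. The only thing requiring care is the bookkeeping in the \(T\)-estimate: each Lipschitz factor difference contributes a factor \(2\), since both arguments \(\theta(x)\) and \(\theta(y)\) (or \(\theta(z)\)) move. It also has to be checked that all constants are independent of \(\theta\), which holds because every factor is globally bounded and no bound on \(\|\theta\|_{\Linfty}\) is needed thanks to the global \(C_b\) hypotheses and periodicity.
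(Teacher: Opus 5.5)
Your proposal is correct and follows essentially the same route as the paper: the sup bounds are read off pointwise, and the Lipschitz estimates come from add-and-subtract and telescoping of the product factors, with \(P\) split into an \(\omega\)-difference and a \(\theta\)-difference. Your constants coincide with the paper's: \(C_K=2(\Mdh M_\Gamma+M_HL_\Gamma)\), \(C_P^\omega=2M_\Gamma\Mdh\), \(C_P^\theta=4M_\omega(L_\Gamma\Mdh+M_\Gamma\MdTwoH)\), and \(C_T^\theta=4\bigl(2L_\Gamma\Mdh M_HM_\Gamma+\MdTwoH M_HM_\Gamma^2+\Mdh^2M_\Gamma^2\bigr)\); the only difference is which \(\omega_\ell\) you hold fixed in the \(\theta\)-difference for \(P\), which is immaterial.
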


\begin{proof}
The bounds \eqref{eq:sec3-bd-K}--\eqref{eq:sec3-bd-T} follow immediately from
\eqref{eq:sec3-Kop}--\eqref{eq:sec3-Top} and \(|\I|=1\).

\emph{Lipschitz bound for \(K\).}
Add and subtract \(H(\theta_2(x),\theta_2(y))\Gamma(\theta_1(y)-\theta_1(x))\) in the integrand:
\begin{align*}
|K[\theta_1](x)-K[\theta_2](x)|
&\le \int_0^1 |H(\theta_1(x),\theta_1(y))-H(\theta_2(x),\theta_2(y))|\,
|\Gamma(\theta_1(y)-\theta_1(x))|\,\dd y \\
&\quad + \int_0^1 |H(\theta_2(x),\theta_2(y))|\,
|\Gamma(\theta_1(y)-\theta_1(x))-\Gamma(\theta_2(y)-\theta_2(x))|\,\dd y.
\end{align*}
The first integral is bounded by \(2\Mdh M_\Gamma\|\theta_1-\theta_2\|_{\Linfty}\) and the second
by \(2M_HL_\Gamma\|\theta_1-\theta_2\|_{\Linfty}\), giving
\(C_K=2(\Mdh M_\Gamma+M_HL_\Gamma)\).

\emph{Lipschitz bound for \(P\).}
Split
\[
P[\theta_1;\omega_1]-P[\theta_2;\omega_2]
=\big(P[\theta_1;\omega_1]-P[\theta_2;\omega_1]\big)
+\big(P[\theta_2;\omega_1]-P[\theta_2;\omega_2]\big).
\]
The second term is linear in \(\omega\) with bounded kernel, giving the
\(C_P^{\omega}\|\omega_1-\omega_2\|_{\Linfty}\) contribution with
\(C_P^{\omega}=2M_\Gamma \Mdh\).
For the first term, each of the two summands in \(P\) contains the product
\(\Gamma(\theta(y)-\theta(x))\partial_rH(\theta(x),\theta(y))\omega(\cdot)\). The
\(\Gamma\)-factor contributes at most \(2L_\Gamma \Mdh M_\omega
\|\theta_1-\theta_2\|_{\Linfty}\), and the \(\partial_rH\)-factor contributes at most
\(2M_\Gamma \MdTwoH M_\omega\|\theta_1-\theta_2\|_{\Linfty}\). Summing over
\(r=1,2\), we may take
\[
C_P^\theta=4M_\omega\bigl(L_\Gamma\Mdh+M_\Gamma\MdTwoH\bigr).
\]

\emph{Lipschitz bound for \(T\).}
Each of the two integrands in \eqref{eq:sec3-Top} is a product of four bounded
Lipschitz factors depending on \(\theta(x),\theta(y),\theta(z)\). Using the telescoping identity
\[
\textstyle\prod_{m=1}^4 a_m-\prod_{m=1}^4 b_m
=\sum_{m=1}^4 (a_m-b_m)\prod_{n<m}b_n\prod_{n>m}a_n,
\]
one double-integral term is bounded by
\[
2\Bigl(2L_\Gamma\Mdh M_HM_\Gamma+\MdTwoH M_HM_\Gamma^2+\Mdh^2M_\Gamma^2\Bigr)
\|\theta_1-\theta_2\|_{\Linfty}.
\]
The second double-integral term has the same bound, so \eqref{eq:sec3-lip-T}
holds with
\[
C_T^\theta
=
4\Bigl(2L_\Gamma\Mdh M_HM_\Gamma+\MdTwoH M_HM_\Gamma^2+\Mdh^2M_\Gamma^2\Bigr).
\]
\end{proof}

\begin{proposition}[Local and global well-posedness chain]\label{prop:sec3-local-global}
Under Assumption~\ref{ass:sec3-main}\,\ref{ass:sec3-B1}--\ref{ass:sec3-B2}, equation~\eqref{eq:sec3-continuum-red}
has a unique local strong solution.
In fact the solution is global, and
\begin{equation}
\|\partial_t\theta(t)\|_{\Linfty(\I)}
\le
\|\omega\|_{\Linfty(\I)} + M_HM_\Gamma
+\varepsilon\,2M_\Gamma \Mdh\|\omega\|_{\Linfty(\I)}
+\varepsilon\,2M_H\Mdh M_\Gamma^2.
\label{eq:sec3-time-der-bound}
\end{equation}
for a.e.\ \(t\ge 0\).
\end{proposition}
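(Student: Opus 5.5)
The plan is to run the same local-to-global chain as in Section~\ref{sec:unreduced-model}, now for the single slow variable. Fix \(\omega\in\Linfty(\I)\) and write the reduced vector field as
\[
\mathcal R_\varepsilon[\theta]:=\omega+K[\theta]+\varepsilon P[\theta;\omega]+\varepsilon T[\theta].
\]
Since \(\omega\) is fixed, Lemma~\ref{lem:sec3-op-bounds} applies with \(M_\omega:=\|\omega\|_{\Linfty(\I)}\). It makes \(\mathcal R_\varepsilon\) \emph{globally} Lipschitz on \(X_{\mathrm{red}}\), with constant
\[
C_K+\varepsilon\bigl(C_P^\theta+C_T^\theta\bigr).
\]
This is stronger than in the unreduced case: the operators are built from bounded Lipschitz functions of \(\theta(x),\theta(y),\theta(z)\), so no a priori bound on the state is needed. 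Before invoking this, I would check that \(\mathcal R_\varepsilon\) indeed maps \(X_{\mathrm{red}}\) into itself. The integrands are compositions of continuous functions with measurable representatives, and they are bounded, so Fubini gives a measurable, essentially bounded function of \(x\). The operators are also well defined on equivalence classes, since altering \(\theta\) on a null set changes the integrands only on null sets.

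With global Lipschitz continuity in hand, Picard--Lindel\"of in the Banach space \(X_{\mathrm{red}}\) gives a unique local solution \(\theta\in C^1([0,T_{\mathrm{loc}}];X_{\mathrm{red}})\). This is in particular a strong solution in \(W^{1,\infty}\). Because the Lipschitz constant is global, the same contraction argument actually works on any interval \([0,T]\) directly, for instance via a weighted sup-norm \(\sup_t e^{-\lambda t}\|\cdot\|\) with large \(\lambda\). Alternatively one can argue by continuation: a uniform bound on the vector field excludes blow-up of \(\|\theta(t)\|_{\Linfty}\) in finite time, so the maximal existence time is infinite. Either route gives global existence.

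For the derivative estimate \eqref{eq:sec3-time-der-bound}, I would evaluate the equation at a.e.\ \(t\) and apply the triangle inequality to \(\|\mathcal R_\varepsilon[\theta(t)]\|_{\Linfty}\), inserting the bounds \eqref{eq:sec3-bd-K}--\eqref{eq:sec3-bd-T}. Integrating this in time then yields the linear growth bound
\[
\|\theta(t)\|_{\Linfty}\le\|\theta_0\|_{\Linfty}+Ct,
\]
which is what the continuation criterion requires.

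Uniqueness among all strong solutions, not only those produced by the fixed-point argument, needs a short separate step. Let \(\theta,\tilde\theta\) be two \(W^{1,\infty}\) solutions. They are absolutely continuous as \(X_{\mathrm{red}}\)-valued maps, so each satisfies the integral equation. Subtracting the two integral equations and applying the global Lipschitz bound and Gr\"onwall gives \(\theta\equiv\tilde\theta\). The only mildly delicate point is measurability of the composed integrands on \(\I^2\) and \(\I^3\) and the validity of the integral form of the equation in the Bochner setting. I expect this to be routine, and otherwise the proof is a direct corollary of Lemma~\ref{lem:sec3-op-bounds}.
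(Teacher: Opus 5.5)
Your proposal is correct and follows essentially the same route as the paper: the Lipschitz bounds of Lemma~\ref{lem:sec3-op-bounds} with Picard--Lindel\"of give local well-posedness, the triangle inequality with \eqref{eq:sec3-bd-K}--\eqref{eq:sec3-bd-T} gives \eqref{eq:sec3-time-der-bound}, and the resulting linear growth of \(\|\theta(t)\|_{\Linfty(\I)}\) with the continuation criterion gives global existence. Your extra remarks are valid refinements that the paper leaves implicit: the Lipschitz constant is global in \(\theta\), so a weighted-norm contraction on any \([0,T]\) would also work, and the Gr\"onwall argument gives uniqueness among all \(W^{1,\infty}\) strong solutions.
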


\begin{proof}
Local existence/uniqueness follows from Lemma~\ref{lem:sec3-op-bounds} and Picard--Lindel\"of on
\(X_{\mathrm{red}}\). Bound \eqref{eq:sec3-time-der-bound} is the direct estimate of the right-hand
side in \eqref{eq:sec3-continuum-red} using \eqref{eq:sec3-bd-K}--\eqref{eq:sec3-bd-T}.
Let \(M_{\partial}\) denote the right-hand side of
\eqref{eq:sec3-time-der-bound}. Integrating the derivative bound gives
\[
\|\theta(t)\|_{\Linfty(\I)}
\le
\|\theta_0\|_{\Linfty(\I)}+tM_{\partial}
\]
on every interval on which the local solution exists. Hence the
\(X_{\mathrm{red}}=\Linfty(\I)\) norm stays finite on every finite time interval, and the
standard Banach-space continuation criterion rules out finite-time blow-up. The local solution
therefore extends globally.
\end{proof}

 Once global existence is in place, the only remaining task is comparison. This is
the point where the decomposition into \(K\), \(P\), and \(T\) becomes most useful: the Lipschitz
bounds from Lemma~\ref{lem:sec3-op-bounds} feed directly into a Gr\"onwall
estimate for the difference of two solutions, which simultaneously yields continuous dependence on
the data and the continuum-limit bound.

\begin{proposition}[Finite-time stability for reduced dynamics]\label{prop:sec3-stability}
Assume Assumption~\ref{ass:sec3-main}\,\ref{ass:sec3-B1}. For \(\ell=1,2\), let
\(\theta_0^\ell,\omega^\ell\in\Linfty(\I)\), and let \(\theta^\ell\) solve
\eqref{eq:sec3-continuum-red} with data
\((\theta_0^\ell,\omega^\ell)\).
Set
\[
\Lambda_\varepsilon:=C_K+\varepsilon C_P^{\theta}+\varepsilon C_T^{\theta},
\qquad
B_\varepsilon:=1+\varepsilon C_P^{\omega},
\]
where constants are from Lemma~\ref{lem:sec3-op-bounds} with
\(M_\omega=\max\{\|\omega^1\|_{\Linfty},\|\omega^2\|_{\Linfty}\}\).
Then for all \(t\in[0,T]\),
\begin{equation}
\|\theta^1(t)-\theta^2(t)\|_{\Linfty(\I)}
\le
e^{\Lambda_\varepsilon t}
\Big(
\|\theta_0^1-\theta_0^2\|_{\Linfty(\I)}
+tB_\varepsilon\|\omega^1-\omega^2\|_{\Linfty(\I)}
\Big).
\label{eq:sec3-stability}
\end{equation}
\end{proposition}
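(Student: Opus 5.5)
We will prove \eqref{eq:sec3-stability} by a direct Gr\"onwall argument on the integral form of \eqref{eq:sec3-continuum-red}. The argument mirrors Proposition~\ref{prop:sec2-stability}, but there is no fast variable to treat separately. Set \(\delta\theta:=\theta^1-\theta^2\) and \(\delta\omega:=\omega^1-\omega^2\). By Proposition~\ref{prop:sec3-local-global}, both solutions lie in \(W^{1,\infty}([0,T];X_{\mathrm{red}})\), so they are Lipschitz in time with values in \(X_{\mathrm{red}}\). The fundamental theorem of calculus for Bochner integrals therefore gives
\[
\delta\theta(t)=\delta\theta(0)+\int_0^t\Bigl(\delta\omega+K[\theta^1(s)]-K[\theta^2(s)]+\varepsilon\bigl(P[\theta^1(s);\omega^1]-P[\theta^2(s);\omega^2]\bigr)+\varepsilon\bigl(T[\theta^1(s)]-T[\theta^2(s)]\bigr)\Bigr)\dd s
\]
in \(L^\infty(\I)\).

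Next we take \(L^\infty\)-norms inside the integral and bound each difference with Lemma~\ref{lem:sec3-op-bounds}, choosing \(M_\omega=\max\{\|\omega^1\|_{\Linfty},\|\omega^2\|_{\Linfty}\}\) so that the hypothesis \(\|\omega_\ell\|\le M_\omega\) holds. The estimates \eqref{eq:sec3-lip-K}, \eqref{eq:sec3-lip-P} and \eqref{eq:sec3-lip-T} hold for all \(\theta\in L^\infty\), with no restriction to bounded sets, because all kernels are uniformly bounded. This yields
\[
\|\delta\theta(t)\|_{\Linfty}\le\|\delta\theta(0)\|_{\Linfty}+t\bigl(1+\varepsilon C_P^\omega\bigr)\|\delta\omega\|_{\Linfty}+\bigl(C_K+\varepsilon C_P^\theta+\varepsilon C_T^\theta\bigr)\int_0^t\|\delta\theta(s)\|_{\Linfty}\dd s.
\]
The bracket multiplying \(t\) is exactly \(B_\varepsilon\), and the coefficient of the integral is exactly \(\Lambda_\varepsilon\).

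Finally we apply the integral form of Gr\"onwall's inequality. The inhomogeneity \(\alpha(t)=\|\delta\theta(0)\|+tB_\varepsilon\|\delta\omega\|\) is nondecreasing in \(t\), so the standard consequence \(u(t)\le\alpha(t)e^{\Lambda_\varepsilon t}\) applies directly and gives \eqref{eq:sec3-stability}.

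The only points needing care are minor. The function \(s\mapsto\|\delta\theta(s)\|_{\Linfty}\) must be measurable, and in fact continuous, which follows from Lipschitz continuity in time. Moving the norm inside the Bochner integral is justified by the triangle inequality for Bochner integrals. No step is a genuine obstacle, since the Lipschitz constants are global.
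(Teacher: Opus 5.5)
Your proposal is correct and follows essentially the same route as the paper: subtract the two reduced equations, bound the differences with the global Lipschitz estimates \eqref{eq:sec3-lip-K}--\eqref{eq:sec3-lip-T}, integrate in time to obtain the constants \(B_\varepsilon\) and \(\Lambda_\varepsilon\), and apply Gr\"onwall with the nondecreasing inhomogeneity. Your remarks on the Bochner-integral representation and on continuity of \(s\mapsto\|\delta\theta(s)\|_{\Linfty}\) fill in routine details that the paper leaves implicit.
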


\begin{proof}
Subtract the two reduced equations and use
\eqref{eq:sec3-lip-K}--\eqref{eq:sec3-lip-T}:
\[
\|\theta^1(t)-\theta^2(t)\|_{\Linfty}
\le
\|\theta_0^1-\theta_0^2\|_{\Linfty}
+tB_\varepsilon\|\omega^1-\omega^2\|_{\Linfty}
+\Lambda_\varepsilon\int_0^t \|\theta^1(s)-\theta^2(s)\|_{\Linfty}\,\dd s.
\]
Apply Gr\"onwall.
\end{proof}

 At this stage the proofs of the two main theorems are short. Theorem~\ref{thm:sec3-wp}
is simply the global existence statement together with the stability estimate on bounded sets of
data, while Theorem~\ref{thm:sec3-conv} is obtained by applying the same comparison argument to an
embedded finite-\(N\) reduced trajectory and the continuum solution.

\begin{proof}[Proof of Theorem~\ref{thm:sec3-wp}]
Global existence and uniqueness are given by
Proposition~\ref{prop:sec3-local-global}.
Lipschitz dependence on \((\theta_0,\omega)\) on \(R\)-balls follows from
Proposition~\ref{prop:sec3-stability} with
\(M_\omega\le R\).
\end{proof}

\begin{proof}[Proof of Theorem~\ref{thm:sec3-conv}]
By Lemma~\ref{lem:sec3-stepfield-preservation}, \(\theta^N\) is a strong solution of
\eqref{eq:sec3-continuum-red} with data \((\theta_0^N,\omega^N)\). Apply
Proposition~\ref{prop:sec3-stability} with
\((\theta^1,\omega^1)=(\theta^N,\omega^N)\) and
\((\theta^2,\omega^2)=(\theta,\omega)\). By
Remark~\ref{rem:sec3-step-frequency-bound}, the constant \(M_\omega\) in
Proposition~\ref{prop:sec3-stability} may be chosen uniformly in \(N\), namely
\(M_\omega\le M_\omega^{\mathrm{step}}\). This yields
\[
\sup_{t\in[0,T]}\|\theta^N(t)-\theta(t)\|_{\Linfty(\I)}
\le
e^{\Lambda_\varepsilon T}\max\{1,TB_\varepsilon\}
\Big(
\|\theta_0^N-\theta_0\|_{\Linfty(\I)}
+\|\omega^N-\omega\|_{\Linfty(\I)}
\Big).
\]
Set \(C_T^{\mathrm{red}}:=e^{\Lambda_\varepsilon T}\max\{1,TB_\varepsilon\}\).
Assumption~\ref{ass:sec3-main}\,\ref{ass:sec3-B3} implies convergence to zero.
\end{proof}

\begin{corollary}[Explicit \(O(1/N)\) rate for Lipschitz data, reduced setting]\label{cor:sec3-rate}
Assume Assumption~\ref{ass:sec3-main}\,\ref{ass:sec3-B1}--\ref{ass:sec3-B2} and that \(\omega\) is Lipschitz on \(\I\) with constant \(L_\omega\) and \(\theta_0\) is Lipschitz on \(\I\) with constant \(L_{\theta_0}\). Define the equal-cell step approximations \((\omega^N,\theta_0^N)\) by cell-evaluation at any chosen point of each cell. Then, with \(C_T^{\mathrm{red}}\) from Theorem~\ref{thm:sec3-conv},
\[
\sup_{t\in[0,T]}\|\theta^N(t)-\theta(t)\|_{\Linfty(\I)}
\le \frac{C_T^{\mathrm{red}}\,(L_\omega+L_{\theta_0})}{N}.
\]
\end{corollary}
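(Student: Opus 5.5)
The plan mirrors the proof of Corollary~\ref{cor:sec2-rate}. It has two steps: a cellwise Lipschitz estimate for the step approximations, and then substitution into Theorem~\ref{thm:sec3-conv}.

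\emph{Step approximation error.} For each \(N\), pick a point \(x_i\in I_i^N\) and set \(\omega^N:=\omega(x_i)\) and \(\theta_0^N:=\theta_0(x_i)\) on \(I_i^N\). Since \(\omega\) and \(\theta_0\) are Lipschitz, they have continuous representatives, so point evaluation is meaningful; we fix these representatives. Any \(x\in I_i^N\) satisfies \(|x-x_i|\le 1/N\), because the cell has length \(1/N\), including the closed first cell \(I_1^N=[0,1/N]\). Hence
\[
|\omega^N(x)-\omega(x)|\le L_\omega/N,
\qquad
|\theta_0^N(x)-\theta_0(x)|\le L_{\theta_0}/N
\]
for every \(x\in\I\). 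Taking essential suprema gives
\[
\|\omega^N-\omega\|_{\Linfty(\I)}+\|\theta_0^N-\theta_0\|_{\Linfty(\I)}\le (L_\omega+L_{\theta_0})/N.
\]
In particular Assumption~\ref{ass:sec3-main}\,\ref{ass:sec3-B3} holds. Together with \ref{ass:sec3-B1}--\ref{ass:sec3-B2}, this means Theorem~\ref{thm:sec3-conv} applies.

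\emph{Insertion into the stability bound.} Theorem~\ref{thm:sec3-conv} gives
\[
\sup_{t\in[0,T]}\|\theta^N(t)-\theta(t)\|_{\Linfty(\I)}\le C_T^{\mathrm{red}}\bigl(\|\theta_0^N-\theta_0\|_{\Linfty(\I)}+\|\omega^N-\omega\|_{\Linfty(\I)}\bigr),
\]
and the bound from the first step finishes the proof.

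\emph{The one point to check.} We need \(C_T^{\mathrm{red}}\) to be independent of \(N\). In Theorem~\ref{thm:sec3-conv} it is defined as \(e^{\Lambda_\varepsilon T}\max\{1,TB_\varepsilon\}\), where \(\Lambda_\varepsilon\) depends on \(M_\omega\). Here \(M_\omega\) can be taken uniformly, because \(\|\omega^N\|_{\Linfty(\I)}\le\|\omega\|_{\Linfty(\I)}\) for point evaluations. This bound is stronger than what Remark~\ref{rem:sec3-step-frequency-bound} provides. Consequently \(C_T^{\mathrm{red}}\) depends only on \(T\), \(\varepsilon\), \(\|\omega\|_{\Linfty(\I)}\) and the structural constants, which gives the claimed \(O(1/N)\) rate.
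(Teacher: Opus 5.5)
Your proof is correct and follows the paper's argument: the cellwise Lipschitz bound on cells of length \(1/N\), then substitution into Theorem~\ref{thm:sec3-conv}. Your extra observation is also valid: with continuous representatives and point evaluation, \(\|\omega^N\|_{\Linfty(\I)}\le\|\omega\|_{\Linfty(\I)}\), so \(C_T^{\mathrm{red}}\) is uniform in \(N\). This gives the same conclusion as the paper's appeal to Remark~\ref{rem:sec3-step-frequency-bound}, with an explicit bound.
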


\begin{proof}
On any cell of length \(1/N\), the Lipschitz hypothesis gives the cell-evaluation bound \(\|\omega^N-\omega\|_{\Linfty(\I)}\le L_\omega/N\) and \(\|\theta_0^N-\theta_0\|_{\Linfty(\I)}\le L_{\theta_0}/N\). Substituting into Theorem~\ref{thm:sec3-conv} yields the displayed estimate.
\end{proof}

Section~\ref{sec:reduced-model} thus establishes the truncated reduce-first continuum leg needed
for the later comparison with the continuum-first route. The truncated continuum equation
\eqref{eq:sec3-continuum-red} is well posed, finite-\(N\) trajectories of the truncated reduced
model with data satisfying Assumption~\ref{ass:sec3-main}\,\ref{ass:sec3-B3} converge to it, and the operators
\(K\), \(P\), and \(T\) retain a clear interpretation as the leading-order, first-order pairwise,
and first-order triplet contributions inherited from the finite-dimensional Fenichel expansion.
The task in Section~\ref{sec:identification} is then to recover exactly these same continuum
operators by reducing the unreduced continuum fast--slow system directly.

\begin{remark}[Truncation of the Fenichel expansion]\label{rem:sec3-truncation}
 {Remark~\ref{rem:sec3-order} shows that, for each fixed \(N\), the
finite-dimensional reduced vector field associated with the chosen slow-manifold representative
differs from its first-order truncation by \(O(\varepsilon^2)\). In the
present section, however, we pass to the continuum limit only for the truncated model
\eqref{eq:sec3-discrete-red}. Thus the reduce-first route is identified here at the level of the
first-order truncation itself. What is left open at this stage is whether the \(O(\varepsilon^2)\)
remainder from the finite-dimensional reduction can be controlled by a constant that is uniform in
\(N\), so that it also survives the dense-graph limit.}
\end{remark}

\section[Infinite-Dimensional Reduction of the Continuum Fast--Slow Model]{Infinite-Dimensional Reduction of the\\ Continuum Fast--Slow Model}\label{sec:infdim-reduction}

This section focuses on the continuum limit. Starting from the continuum fast-slow equation from Section~\ref{sec:unreduced-model}, the aim is to construct a slow manifold in the Banach-space setting. The proof strategy follows the Lyapunov-Perron approach developed for infinite-dimensional evolution equations and fast-reaction systems, with the present model fitting a particularly favourable special case because the fast linearisation is the constant operator $-I$ on $\Linfty([0,1]^2)$ and there are no spatial differential operators involved.

\subsection{Abstract fast-slow formulation}

Following Section~\ref{sec:unreduced-model}, define the Banach spaces
\begin{equation}
X_\theta := \Linfty(\I),\qquad X_a := \Linfty(\I^2),
\end{equation}
with norm
\begin{equation}
\norm{(a,\theta)}_{X_a\times X_\theta} := \norm{a}_{X_a} +\norm{\theta}_{X_\theta}.
\end{equation}
Compared with Section~\ref{sec:unreduced-model}, we order the product space here as \(X_a\times X_\theta\), placing the fast component first because the Lyapunov--Perron construction treats the slow manifold as a graph \(a=h(\theta)\) and then works in rectified coordinates \((\xi,\theta)\).

Then the continuum model can be written as an abstract Cauchy problem over these Banach spaces:
\begin{align}\label{eq:abstract-fastreaction}
\partial_t a& =\frac{1}{\eps} \mathcal{G}(\theta,a),
\qquad a(0)=a_0\\ \label{eq:abstract-slowterm}
\partial_t \theta &= \mathcal{F}(\theta,a),\qquad \theta(0)=\theta_0
\end{align}
where, as in Section~\ref{sec:unreduced-model},
\begin{align*}
\mathcal{G}(\theta,a)(x,y) &=-a(x,y) +H\bigl(\theta (x),\theta (y)\bigr), \\
\mathcal{F}(\theta,a)(x) &= \omega(x)+\int_0^1 a(x,y)\,\Gamma\bigl(\theta(y)-\theta(x)\bigr)\,\dd y.
\end{align*}

\begin{assumption}[Section 4 regularity hypotheses]\label{ass:section4}
Assume the following.
\begin{enumerate}[label= {(C\arabic*)}]
    \item $\Gamma\in C_b^1(\R)$ is $2\pi$-periodic, and $H\in C_b^2(\R^2)$ is $2\pi$-periodic in each argument.
    \item $\omega\in X_\theta$ and $a_0\in X_a$, $\theta_0\in X_\theta$.
    \item All estimates are carried out on bounded subsets of $X_a$ and $X_\theta$.
\end{enumerate}
\end{assumption}

\begin{remark}[Role of the strengthened regularity]
Section~\ref{sec:unreduced-model} only required $H\in C_b^1(\R^2)$ for well-posedness of the continuum limit of the adaptive network. For the slow-manifold expansion and the first-order identification of the reduced vector field, the natural threshold is $H\in C_b^2(\R^2)$, consistent with the continuum reduced equation in Section~\ref{sec:reduced-model}.
\end{remark}

\begin{remark}[Reduced and layer problems at \(\varepsilon=0\)]
In the slow-time formulation, setting \(\varepsilon=0\) does not produce an
evolution equation on all of \(X_a\times X_\theta\); it imposes the algebraic constraint
\(\mathcal G(\theta,a)=0\), namely \(a=h_0(\theta)\). The singular objects used below are therefore
the reduced slow flow on the critical manifold and the layer problem in the fast time
\(\tau=t/\varepsilon\), rather than an off-manifold \(\varepsilon=0\) slow-time Cauchy problem.
\end{remark}

Throughout Sections~\ref{sec:infdim-reduction}--\ref{sec:identification}, set
\begin{align*}
M_\omega&:=\|\omega\|_{L^\infty([0,1])},\quad
M_H:=\|H\|_{L^\infty(\R^2)},\quad
M_\Gamma:= \|\Gamma\|_{L^\infty(\R)},\\
\Mdh&:= \max_{q=1,2}\|\partial_q H\|_{L^\infty(\R^2)},\quad
\MdTwoH:= \max_{q,\ell=1,2}\|\partial_{q\ell}^2 H\|_{L^\infty(\R^2)},\quad
L_\Gamma:=\|\Gamma'\|_{L^\infty(\R)}.
\end{align*}

\subsection{Critical manifold and normal hyperbolicity}

The singular limit \(\varepsilon=0\) of \eqref{eq:abstract-fastreaction}-\eqref{eq:abstract-slowterm} yields the algebraic constraint
\(\mathcal{G}(\theta,a)=0\), i.e. \(a=h_0(\theta)\), where
\[ h_0(\theta)(x,y)=H(\theta(x),\theta(y)). \]
The reduced slow equation is therefore
\[ \partial_t\theta=\mathcal{F}(\theta,h_0(\theta)). \]

Hence the critical set is
\begin{equation}
S_0 := \{(a,\theta)\in X_a\times X_\theta :\, a = h_0(\theta)\},
\qquad
h_0(\theta)(x,y):=H\bigl(\theta(x),\theta(y)\bigr).
\end{equation}

\begin{proposition}[Critical manifold as a global graph]
Under Assumption~\ref{ass:section4}, $S_0$ is a $C^1$ graph over $X_\theta$. More precisely, the map
\begin{equation}
h_0 : X_\theta\to X_a,
\qquad
h_0(\theta)(x,y)=H\bigl(\theta(x),\theta(y)\bigr),
\end{equation}
is well defined and locally Lipschitz on bounded sets. Since \(H\in C_b^2(\R^2)\) by Assumption~\ref{ass:section4}, the map \(h_0\) is Fr\'echet differentiable with derivative
\begin{equation}\label{eq:Dh0}
\bigl(\DD h_0(\theta)\varphi\bigr)(x,y)
=
\partial_1 H\bigl(\theta(x),\theta(y)\bigr)\varphi(x)
+
\partial_2 H\bigl(\theta(x),\theta(y)\bigr)\varphi(y).
\end{equation}
\end{proposition}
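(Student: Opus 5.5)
The plan is to verify three things directly from the pointwise definition of \(h_0\). These are well-definedness with a uniform bound, a Lipschitz estimate, and Fr\'echet differentiability with continuous derivative. The graph statement for \(S_0\) then follows immediately.

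\emph{Well-definedness and Lipschitz bound.} For \(\theta\in X_\theta\), the function \((x,y)\mapsto H(\theta(x),\theta(y))\) is measurable, being a continuous function composed with a measurable map into \(\R^2\). It is bounded by \(M_H\), so \(h_0(\theta)\in X_a\). Changing \(\theta\) on a null set changes \(h_0(\theta)\) only on a null subset of \(\I^2\), so \(h_0\) is well defined on equivalence classes. The mean value theorem gives
\[
|H(\theta_1(x),\theta_1(y))-H(\theta_2(x),\theta_2(y))|\le \Mdh\bigl(|\theta_1(x)-\theta_2(x)|+|\theta_1(y)-\theta_2(y)|\bigr),
\]
which is exactly the estimate already used in Lemma~\ref{lem:sec2-lipschitz}. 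Hence \(\|h_0(\theta_1)-h_0(\theta_2)\|_{X_a}\le 2\Mdh\|\theta_1-\theta_2\|_{X_\theta}\). This Lipschitz bound is global, so in particular it holds on bounded sets.

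\emph{Differentiability.} Define \(L_\theta\varphi\) as the right-hand side of \eqref{eq:Dh0}. It is linear in \(\varphi\) and satisfies \(\|L_\theta\varphi\|_{X_a}\le 2\Mdh\|\varphi\|_{X_\theta}\), so \(L_\theta\in\mathcal L(X_\theta,X_a)\). For the remainder, write \(p=(\theta(x),\theta(y))\) and \(v=(\varphi(x),\varphi(y))\). Taylor's theorem with integral remainder gives
\[
H(p+v)-H(p)-\nabla H(p)\cdot v=\int_0^1(1-s)\,v^{\top}\DD^2H(p+sv)\,v\,\dd s.
\]
Its absolute value is at most \(2\MdTwoH\,\|\varphi\|_{X_\theta}^2\) uniformly in \((x,y)\), since \(|v|_1\le 2\|\varphi\|_{X_\theta}\). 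Therefore \(\|h_0(\theta+\varphi)-h_0(\theta)-L_\theta\varphi\|_{X_a}=O(\|\varphi\|^2_{X_\theta})\), which shows \(\DD h_0(\theta)=L_\theta\). The key point is that the pointwise bound holds for all \((x,y)\) simultaneously, so it passes to the \(L^\infty\) norm. This uniformity is where \(H\in C_b^2\) is needed rather than merely \(C^1\).

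\emph{Continuity of \(\DD h_0\).} Apply the mean value theorem to \(\partial_qH\):
\[
\|\DD h_0(\theta_1)-\DD h_0(\theta_2)\|_{\mathcal L(X_\theta,X_a)}\le 4\MdTwoH\|\theta_1-\theta_2\|_{X_\theta}.
\]
So \(\DD h_0\) is Lipschitz, and hence \(h_0\in C^1(X_\theta,X_a)\). It follows that \(S_0\) is the graph of a \(C^1\) map, namely the image of the \(C^1\) embedding \(\theta\mapsto(h_0(\theta),\theta)\).

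The main obstacle is the differentiability step. Superposition (Nemytskii) operators are generally not Fr\'echet differentiable on \(L^p\) spaces with \(p<\infty\). Here it works only because we are in \(L^\infty\) and have a uniform second-derivative bound, so this step is where the care must go.
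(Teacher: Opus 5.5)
Your argument is correct and follows essentially the paper's route. The paper gives this proposition no separate proof: it records exactly your three bounds (the $2\Mdh$ Lipschitz estimate, $\|\DD h_0(\theta)\|\le 2\Mdh$, and the $4\MdTwoH$ derivative-Lipschitz estimate) in the slow-manifold construction subsection, and it obtains differentiability from the $L^\infty$ superposition result, Lemma~\ref{lem:sec4-nemytskii-calculus}, applied to the pullback $\theta\mapsto(\theta(x),\theta(y))$, which is precisely a pointwise Taylor expansion passed to the essential supremum, as you do. One correction to your commentary: $H\in C_b^2$ is not needed for Fr\'echet differentiability, or even for $h_0\in C^1$, because the essential range of an $L^\infty$ function is bounded and uniform continuity of $\nabla H$ on compacts already gives an $o(\|\varphi\|)$ remainder; what $C_b^2$ buys is your quadratic remainder and the global Lipschitz bound on $\DD h_0$, which the paper later uses in $C_3(r)$.
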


\begin{proposition}[Normal hyperbolicity of the critical graph]
Along \(S_0\), the layer problem in the fast time \(\tau=t/\varepsilon\) has neutral tangent
dynamics and uniformly exponentially stable normal dynamics. More precisely, in the coordinates
\[
\xi=a-h_0(\theta),
\]
the linearised layer equation is
\[
\partial_\tau \xi=-\xi,\qquad \partial_\tau\theta=0.
\]
Thus, in these coordinates, the tangent linearised flow is the identity semigroup on
\(X_\theta\), while the normal linearised flow is the exponentially contracting semigroup
\(e^{-\tau}I_{X_a}\). In particular, \(S_0\) is normally hyperbolic, with neutral tangent dynamics
and uniform normal contraction rate \(1\).
\end{proposition}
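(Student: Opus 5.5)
The statement to prove is the last proposition above, on normal hyperbolicity of the critical graph. The plan is to pass to the rectified coordinates \(\xi=a-h_0(\theta)\) and rewrite the fast--slow system in the fast time \(\tau=t/\varepsilon\). In the fast time, \eqref{eq:abstract-fastreaction}--\eqref{eq:abstract-slowterm} read
\[
\partial_\tau a=\mathcal G(\theta,a),\qquad \partial_\tau\theta=\varepsilon\mathcal F(\theta,a).
\]
The layer problem is the \(\varepsilon=0\) system
\[
\partial_\tau a=-a+h_0(\theta),\qquad \partial_\tau\theta=0.
\]
Here \(\theta\) is frozen, so \(h_0(\theta)\) is a constant element of \(X_a\). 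The first step is therefore the exact identity \(\partial_\tau\xi=\partial_\tau a=-\xi\). This uses only that \(h_0\) is well defined \(X_\theta\to X_a\), which was established in the preceding proposition (\(H\) bounded and continuous, so \(h_0(\theta)\in L^\infty(\I^2)\)).

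The second step is the linearisation at a point \((h_0(\theta_*),\theta_*)\in S_0\). The layer vector field is \((a,\theta)\mapsto(-a+h_0(\theta),0)\). By the Fréchet differentiability of \(h_0\) from the preceding proposition, formula \eqref{eq:Dh0}, its derivative is
\[
(\dot a,\dot\theta)\mapsto\bigl(-\dot a+\DD h_0(\theta_*)\dot\theta,\,0\bigr).
\]
Now change variables by \(\dot\xi=\dot a-\DD h_0(\theta_*)\dot\theta\). This is a bounded linear isomorphism of \(X_a\times X_\theta\), with inverse \(\dot a=\dot\xi+\DD h_0(\theta_*)\dot\theta\). Boundedness holds because \(\|\DD h_0(\theta_*)\|\le 2\Mdh\) by \eqref{eq:Dh0}. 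In these variables the linearised layer equation becomes \(\partial_\tau\dot\xi=-\dot\xi\) and \(\partial_\tau\dot\theta=0\), as claimed.

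The third step identifies the splitting. The tangent space \(T S_0=\{(\DD h_0(\theta_*)\varphi,\varphi)\}\) corresponds to \(\dot\xi=0\), and on it the flow is the identity semigroup. The normal complement \(X_a\times\{0\}\) corresponds to \(\dot\xi\), with semigroup \(e^{-\tau}I_{X_a}\), so \(\|e^{-\tau}I\|=e^{-\tau}\). Both semigroups are uniformly continuous, since they are generated by bounded operators.

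Uniformity along \(S_0\) follows because the contraction rate \(1\) is independent of \(\theta_*\). The coordinate change has norm at most \(1+2\Mdh\) uniformly in \(\theta_*\), so the spectral gap between rate \(0\) (tangent) and rate \(-1\) (normal) is uniform. This gives normal hyperbolicity, in the sense of a uniform exponential dichotomy with neutral tangent part.

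I expect no serious obstacle. The only point needing care is checking that the change of coordinates is a uniformly bounded isomorphism, and that differentiability of \(h_0\) holds in \(L^\infty\), which is exactly what \(H\in C_b^2\) provides through the previous proposition.
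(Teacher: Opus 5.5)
Your proposal is correct and follows essentially the paper's argument: linearise the layer problem at \((h_0(\theta),\theta)\) using \(D_2\mathcal G=-I_{X_a}\), then pass to \(\delta\xi=\delta a-\DD h_0(\theta)\delta\theta\) to obtain \(\partial_\tau\delta\xi=-\delta\xi\) and \(\partial_\tau\delta\theta=0\). Your extra checks are details the paper leaves implicit: the exact nonlinear identity for frozen \(\theta\), and the fact that the coordinate change is a bounded isomorphism with norm uniform in \(\theta_*\).
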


\begin{proof}
Since \(\mathcal{G}(\theta,a)= -a+h_0(\theta)\), one has
\(D_2\mathcal G(\theta,a)=-I_{X_a}\). For the full layer linearisation at
\((h_0(\theta),\theta)\),
\[
\partial_\tau\delta a=-\delta a+\DD h_0(\theta)\delta\theta,\qquad
\partial_\tau\delta\theta=0.
\]
Writing \(\delta\xi=\delta a-\DD h_0(\theta)\delta\theta\) gives
\(\partial_\tau\delta\xi=-\delta\xi\), which is the claimed splitting.
\end{proof}

\begin{remark}[Connection with the infinite-dimensional theory]
This places the present model within the Banach-space invariant-manifold theory for
semiflows, in particular the framework developed by Bates, Lu, and Zeng
\cite{BatesLuZeng1998,BatesLuZeng2000}, and more specifically within the recent
infinite-dimensional fast--slow theory for Cauchy problems developed in
\cite{Hummel-Kuehn,kuehn2024infinite,Kuehn-Sulzbach,kuehn2026approximate}. In the present model
the critical manifold is a graph and the attraction in the fast variable is governed by a fixed
bounded operator. Moreover, there is no differential operator in the slow variable, so the main
technical burden present in those fast--slow works reduces here to explicit Lipschitz and fixed-point
estimates.
\end{remark}

\subsection{Existence and properties of the slow manifold}

The main result of this subsection is Theorem~\ref{thm:sec4-slow-manifold}, which constructs a global slow manifold $S_{\eps}$ for the continuum fast--slow problem, proves that its graph is $O(\eps)$-close to the critical manifold $S_0$, and records the regularity and attraction estimates needed later in Section~\ref{sec:identification}.

\begin{remark}[Structure of the construction]
The proof first passes to a rectified coordinate system \(\xi=a-h_0(\theta)\) and introduces a cut-off in the fast variable. The cut-off gives global Lipschitz bounds for the auxiliary Lyapunov--Perron problem, while the technical estimates in Appendix~\ref{app:lp-technical} give differentiability of the selected graph. We then show that the graph lies inside the cut-off-inactive region, transfer forward invariance to the original system, and finally check that the resulting graph is independent of the chosen cut-off level.
\end{remark}

We start with the rectifying change of variables
\[
\xi:=a-h_0(\theta).
\]
Since
\[
\partial_t a=\partial_t\xi+\DD h_0(\theta)\partial_t\theta,
\]
and \(\partial_t\theta=\mathcal{F}(\theta,h_0(\theta)+\xi)\), the original system \eqref{eq:abstract-fastreaction}--\eqref{eq:abstract-slowterm} is equivalent to
\begin{align}
\partial_t \xi&= -\eps^{-1}\xi - \DD h_0(\theta) B(\xi,\theta),\quad \xi(0)=\xi_0= a_0-h_0(\theta_0),\label{eq:sec4-transformed-fast}\\
\partial_t \theta&= B(\xi,\theta),\quad \theta(0)=\theta_0,\label{eq:sec4-transformed-slow}
\end{align}
where \(B(\xi,\theta)=\mathcal{F}(\theta,h_0(\theta)+\xi)\) denotes the slow vector field in rectified coordinates. The symbol \(B\) is used throughout Section~\ref{sec:infdim-reduction} to distinguish this rectified-coordinate slow field from the original slow vector field \(\mathcal F\).

The Lyapunov--Perron argument cannot be run directly with \(B\), because the slow field is only locally Lipschitz in \(\theta\), with a Lipschitz constant depending on the fast variable.

To remove this dependence, choose an odd function \(\chi\in C_b^\infty(\R)\) such that
\[
\chi(z)=z\quad\text{for }|z|\le 1,\qquad |\chi(z)|\le 2,\qquad |\chi'(z)|\le 1, \quad \text{for all } z\in \R,
\]
i.e., \(\chi\) is constant for \(|z|\) sufficiently large. For each \(r>0\) define
\[
\sigma_r:\R\to\R,\qquad \sigma_r(z):=r\,\chi(z/r).
\]
Then
\[
\sigma_r(z)=z\quad\text{for }|z|\le r,\qquad |\sigma_r(z)|\le 2r,\qquad |\sigma_r'(z)|\le 1,\quad \text{for all } z\in \R
\]
and \(\sigma_r\in C_b^\infty(\R)\).

The associated Nemytskii operator
\[
\mathsf{N}_r:X_a\to X_a,\qquad (\mathsf{N}_r a)(x,y):=\sigma_r(a(x,y))
\]
satisfies
\[
\|\mathsf{N}_r a\|_{X_a}\le 2r,\qquad \|\mathsf{N}_r a_1-\mathsf{N}_r a_2\|_{X_a}\le \|a_1-a_2\|_{X_a}
\]
for all \(a,a_1,a_2\in X_a\). By the classical superposition-operator theorem on
\(L^\infty\), \(\sigma_r\in C_b^\infty(\R)\) induces a \(C^\infty\) Nemytskii map
\(\mathsf{N}_r:X_a\to X_a\), with derivative
\[
\bigl(\DD\mathsf{N}_r(a)b\bigr)(x,y)=\sigma_r'(a(x,y))\,b(x,y),
\qquad
\|\DD\mathsf{N}_r(a)\|_{\mathcal{B}(X_a,X_a)}\le 1.
\]

The finite-window \(L^\infty\) superposition facts used below are collected in Lemma~\ref{lem:sec4-nemytskii-calculus} in Appendix~\ref{app:lp-technical}.

Fix from now on a cut-off level \(r>M_H\), and define the truncated slow vector field
\[
\mathcal{F}_r(\theta,a)(x):=\omega(x)+\int_0^1 (\mathsf{N}_r a)(x,y)\,\Gamma\bigl(\theta(y)-\theta(x)\bigr)\,\dd y.
\]
After the change of variables \(\xi=a-h_0(\theta)\), set
\[
B_r(\xi,\theta):=\mathcal{F}_r(\theta,h_0(\theta)+\xi).
\]
This is the truncated counterpart of \(B(\xi,\theta)\); the \(B\)-notation is kept for the same reason as above, to keep \(B_r\) clearly separate from the original slow field \(\mathcal F\).
The auxiliary transformed system used in the fixed-point construction is therefore
\begin{align}
\partial_t \xi&= -\eps^{-1}\xi - \DD h_0(\theta) B_r(\xi,\theta),\quad \xi(0)=\xi_0= a_0-h_0(\theta_0),\label{eq:sec4-transformed-fast-r}\\
\partial_t \theta&= B_r(\xi,\theta),\quad \theta(0)=\theta_0.\label{eq:sec4-transformed-slow-r}
\end{align}

The Fr\'echet derivative of \(h_0\) is
\[
\bigl(\DD h_0(\theta)\phi\bigr)(x,y)= \partial_1 H(\theta(x),\theta(y))\phi(x)+ \partial_2 H(\theta(x),\theta(y))\phi(y).
\]
Then
\begin{align*}
\|h_0(\theta_1)-h_0(\theta_2)\|_{X_a}&\le 2\Mdh\|\theta_1-\theta_2\|_{X_\theta},\\
\| \DD h_0(\theta) \|_{\mathcal{B}(X_\theta,X_a)}&\le 2 \Mdh,\\
\| \DD h_0(\theta_1)- \DD h_0(\theta_2) \|_{\mathcal{B}(X_\theta,X_a)}&\le 4 \MdTwoH\|\theta_1-\theta_2\|_{X_\theta}.
\end{align*}
Moreover, for all \((\theta,a),(\theta_1,a_1),(\theta_2,a_2)\in X_\theta\times X_a\) and \((u,v),(\tilde u,\tilde v)\in X_a\times X_\theta\),
\begin{align*}
\|\mathcal{F}_r(\theta,a)\|_{X_\theta}&\le C_2(r):=M_\omega +2rM_\Gamma,\\
\|\mathcal{F}_r(\theta_1,a_1)-\mathcal{F}_r(\theta_2,a_2)\|_{X_\theta}&\le M_\Gamma \|a_1-a_2\|_{X_a} + 4rL_\Gamma\|\theta_1-\theta_2\|_{X_\theta},\\
\|B_r(u,v)\|_{X_\theta}&\le C_2(r),\\
\|B_r(u,v)-B_r(\tilde u,\tilde v)\|_{X_\theta}&\le M_\Gamma \|u-\tilde u\|_{X_a} + \bigl(2M_\Gamma \Mdh +4rL_\Gamma\bigr)\|v-\tilde v\|_{X_\theta},\\
\| \DD h_0(v) B_r(u,v)-\DD h_0(\tilde v)B_r(\tilde u,\tilde v)\|_{X_a} &\le C_3(r)\bigl(\|u-\tilde u\|_{X_a}+\|v-\tilde v\|_{X_\theta}\bigr),
\end{align*}
where
\begin{equation}\label{eq:sec4-Cconstants}
\begin{aligned}
C_1(r)&:=2\Mdh\,C_2(r),
\\
C_3(r)&:=2\Mdh M_\Gamma + 4\MdTwoH\,C_2(r) + 2\Mdh\bigl(2M_\Gamma\Mdh+4rL_\Gamma\bigr),
\\
C_4(r)&:=M_\Gamma+2M_\Gamma\Mdh+4rL_\Gamma.
\end{aligned}
\end{equation}
 The key point is that the four constants \(C_1(r)\), \(C_2(r)\), \(C_3(r)\), and
\(C_4(r)\) depend only on the cut-off level \(r\) and on the model coefficients, not on any
trajectory radius in the fast variable.

For the differentiability argument below, set
\[
A_r(\xi,\theta):=\DD h_0(\theta)B_r(\xi,\theta).
\]
The regularity bounds for \(A_r\) and \(B_r\) are collected in Lemma~\ref{lem:sec4-Ar-Br-C1} in Appendix~\ref{app:lp-technical}.

For \(\eta<0\), define the weighted backward-trajectory spaces
\begin{align*}
C_{\eta,X_a}&:=\Bigl\{\xi\in C((-\infty,0];X_a): \|\xi\|_{C_{\eta,X_a}}:=\sup_{t\le 0} e^{-\eta t}\|\xi(t)\|_{X_a}<\infty\Bigr\},\\
C_{\eta,X_\theta}&:=\Bigl\{\theta\in C((-\infty,0];X_\theta): \|\theta\|_{C_{\eta,X_\theta}}:=\sup_{t\le 0} e^{-\eta t}\|\theta(t)\|_{X_\theta}<\infty\Bigr\},
\end{align*}
We write
\begin{equation}\label{eq:sec4-weighted-product}
\begin{aligned}
\mathcal X_\eta&:=C_{\eta,X_a}\times C_{\eta,X_\theta},\\
\|(\xi,\theta)\|_{\mathcal X_\eta}
&:=\|\xi\|_{C_{\eta,X_a}}+\|\theta\|_{C_{\eta,X_\theta}}.
\end{aligned}
\end{equation}
These are the weighted backward-trajectory spaces used in the Lyapunov--Perron fixed-point argument below. Since \(\eta<0\), membership in
\(\mathcal X_\eta\) allows at most exponential growth as \(t\to-\infty\), which is compatible
with the backward slow integral. The condition \(1+\varepsilon\eta>0\), imposed below, ensures
that the fast improper integral from \(-\infty\) is well defined.

For given anchor \(\theta_0\in X_\theta\), define the Lyapunov--Perron operator
\begin{align}
  \mathcal{L}_{\eps,\theta_0,r}:{\mathcal X_\eta}&\to {\mathcal X_\eta}
\intertext{by}
\mathcal{L}_{\eps,\theta_0,r}^{(1)}(\xi,\theta)(t)
&:=-\int_{-\infty}^t e^{-\eps^{-1}(t-s)} \,\DD h_0(\theta(s))B_r(\xi(s),\theta(s))\,\dd s, \\
\mathcal{L}_{\eps,\theta_0,r}^{(2)}(\xi,\theta)(t)
&:= \theta_0 + \int_0^t B_r\bigl(\xi(s),\theta(s)\bigr)\,\dd s.
\end{align}

\begin{remark}[Interpretation of the Lyapunov-Perron operator]
The first component is the explicit mild formula for the fast relaxation equation, integrated backward from $-\infty$. 
As $t-s>0$ the exponential term is rapidly decaying. 
The second component expresses the slow variable as a backward integral anchored at time $0$. 
This is the standard Lyapunov-Perron mechanism specialised to the present model.
\end{remark}

\begin{lemma}[Fixed point of the Lyapunov-Perron operator]\label{lem:lyapunov_perron}
Fix the cut-off level \(r>M_H\). Then there exist \(\eta_r<0\) and \(\eps_{0,r}>0\) such that for every anchor \(\theta_0\in X_\theta\) and every \(\eps\in(0,\eps_{0,r}]\), the map \(\mathcal L_{\eps,\theta_0,r}\) has a unique fixed point in the weighted product space \(\mathcal X_{\eta_r}\) defined in \eqref{eq:sec4-weighted-product}. 
\end{lemma}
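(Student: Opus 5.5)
The plan is the Banach fixed-point theorem on the complete metric space \(\mathcal X_\eta\), using the global Lipschitz constants \(C_1(r),\dots,C_4(r)\), which depend only on \(r\), and then choosing \(\eta\) and \(\varepsilon\).

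\textbf{Step 1 (well-definedness).} Let \((\xi,\theta)\in\mathcal X_\eta\) with \(\eta<0\) and \(1+\varepsilon\eta>0\). Continuity in time of both components is standard, so the work is in the weighted bounds.

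For the first component, \(\|\DD h_0(\theta(s))B_r(\xi(s),\theta(s))\|_{X_a}\le C_1(r)\) uniformly in \(s\). Hence \(\|\mathcal L^{(1)}(t)\|_{X_a}\le\varepsilon C_1(r)\), and since \(e^{-\eta t}\le1\) for \(t\le0\), its \(C_{\eta,X_a}\)-norm is at most \(\varepsilon C_1(r)\).

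For the second component, \(\|\mathcal L^{(2)}(t)\|_{X_\theta}\le\|\theta_0\|+|t|C_2(r)\). Because \(\eta<0\), \(\sup_{t\le0}e^{-\eta t}(1+|t|)<\infty\), so this component lies in \(C_{\eta,X_\theta}\).

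\textbf{Step 2 (contraction).} Take two points \((\xi_i,\theta_i)\in\mathcal X_\eta\) and write \(\delta:=\|(\xi_1,\theta_1)-(\xi_2,\theta_2)\|_{\mathcal X_\eta}\), so that \(\|\xi_1(s)-\xi_2(s)\|+\|\theta_1(s)-\theta_2(s)\|\le e^{\eta s}\delta\).

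Fast component: the Lipschitz bound with constant \(C_3(r)\) gives
\[
e^{-\eta t}\Bigl\|\int_{-\infty}^t e^{-(t-s)/\varepsilon}\bigl(A_r(\xi_1,\theta_1)-A_r(\xi_2,\theta_2)\bigr)(s)\,\dd s\Bigr\|
\le C_3(r)\,\delta\int_{-\infty}^t e^{-(t-s)(1/\varepsilon+\eta)}\,\dd s
=\frac{\varepsilon C_3(r)}{1+\varepsilon\eta}\,\delta .
\]

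Slow component: for \(t\le0\) the \(B_r\)-Lipschitz bound gives
\[
e^{-\eta t}\int_t^0 C_4(r)\,e^{\eta s}\,\dd s\,\delta\le\frac{C_4(r)}{|\eta|}\,\delta .
\]

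Summing the two, the contraction factor is \(\varepsilon C_3(r)/(1+\varepsilon\eta)+C_4(r)/|\eta|\).

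\textbf{Step 3 (choice of constants).} First fix \(\eta_r:=-4C_4(r)\), so that the slow term is at most \(1/4\). Then choose \(\varepsilon_{0,r}\) small enough that \(1+\varepsilon\eta_r\ge1/2\) and \(2\varepsilon C_3(r)\le1/4\) for all \(\varepsilon\le\varepsilon_{0,r}\). The total contraction factor is then at most \(1/2\).

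The resulting constants \(\eta_r,\varepsilon_{0,r}\) do not depend on the anchor \(\theta_0\): the anchor enters only as an additive constant, which cancels in differences. Banach's theorem then gives a unique fixed point for every \(\theta_0\in X_\theta\).

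\textbf{Main obstacle.} The delicate point is the order of the two choices. The slow backward integral forces \(|\eta|\) to be large compared with \(C_4(r)\). The fast integral needs \(1/\varepsilon>|\eta|\), and in fact needs \(\varepsilon\) small relative to both \(1/|\eta|\) and \(1/C_3(r)\). This spectral-gap condition is satisfiable only because the cut-off makes \(C_3\) and \(C_4\) global; without \(\mathsf N_r\) the constants would depend on the trajectory radius and the scheme would fail.
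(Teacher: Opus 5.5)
Your proof is correct and follows the paper's argument essentially step for step. It uses the same weighted kernel integrals \(\varepsilon/(1+\varepsilon\eta)\) and \(1/|\eta|\), the same cut-off-induced global constants \(C_1(r),\dots,C_4(r)\), and the same order of choices: first \(\eta_r\) to make \(C_4(r)/|\eta_r|\) small, then \(\varepsilon_{0,r}\) to control the fast term and keep \(1+\varepsilon\eta_r>0\). The only nit is that your explicit choice \(\eta_r:=-4C_4(r)\) degenerates to \(\eta_r=0\) in the trivial case \(\Gamma\equiv 0\), so take for example \(\eta_r:=-\max\{4C_4(r),1\}\).
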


\begin{proof}
Fix \(\eta<0\), \(\theta_0\in X_\theta\), and \((\xi,\theta),(\tilde \xi,\tilde \theta)\in {\mathcal X_\eta}\). Using the estimates above, together with
\[
e^{-\eta t}\int_{-\infty}^t e^{-\eps^{-1}(t-s)}e^{\eta s}\,\dd s
=\int_0^\infty e^{-(\eps^{-1}+\eta)\tau}\,\dd\tau
=\frac{\eps}{1+\eps\eta},
\qquad
e^{-\eta t}\int_t^0 e^{\eta s}\,\dd s\le \frac1{|\eta|},
\]
we obtain
\begin{align*}
\|\mathcal L^{(1)}_{\eps,\theta_0,r}(\xi,\theta)\|_{C_{\eta,X_a}}
&\le C_1(r)\frac{\eps}{1+\eps\eta},\\
\|\mathcal L^{(2)}_{\eps,\theta_0,r}(\xi,\theta)\|_{C_{\eta,X_\theta}}
&\le \|\theta_0\|_{C_{\eta,X_\theta}}+\frac{C_2(r)}{|\eta|}=\|\theta_0\|_{X_\theta}+\frac{C_2(r)}{|\eta|}.
\end{align*}
Taking the difference between two solutions yields
\begin{align*}
\|\mathcal L^{(1)}_{\eps,\theta_0,r}(\xi,\theta)-\mathcal L^{(1)}_{\eps,\theta_0,r}(\tilde \xi,\tilde \theta)\|_{C_{\eta,X_a}}
&\le C_3(r)\frac{\eps}{1+\eps\eta}\,{\|(\xi,\theta)-(\tilde \xi,\tilde \theta)\|_{\mathcal X_\eta}},\\
\|\mathcal L^{(2)}_{\eps,\theta_0,r}(\xi,\theta)-\mathcal L^{(2)}_{\eps,\theta_0,r}(\tilde \xi,\tilde \theta)\|_{C_{\eta,X_\theta}}
&\le \frac{C_4(r)}{|\eta|}\,{\|(\xi,\theta)-(\tilde \xi,\tilde \theta)\|_{\mathcal X_\eta}}.
\end{align*}

Set
\[
q_{\eps,\eta,r}
:=
C_3(r)\frac{\eps}{1+\eps\eta}
+\frac{C_4(r)}{|\eta|}.
\]

Choose \(\eta_r<0\) so that \(C_4(r)/|\eta_r|<1/2\). Then choose
\(\eps_{0,r}>0\) sufficiently small that
\[
1+\eps\eta_r>0,\qquad q_{\eps,\eta_r,r}<1,
\]
for every \(\eps\in(0,\eps_{0,r}]\). With these choices,
\(\mathcal L_{\eps,\theta_0,r}\) maps the Banach space
{\(\mathcal X_{\eta_r}\)} into itself by the bounds above, and is a
\(q_{\eps,\eta_r,r}\)-contraction there by the difference estimates. Hence the Banach
fixed-point theorem gives a unique fixed point in \(\mathcal X_{\eta_r}\).
\end{proof}

From here on we fix this admissible weight \(\eta_r\) and work throughout in the
single space \(\mathcal X_{\eta_r}\), the corresponding member of the family
\eqref{eq:sec4-weighted-product}. The generic weight \(\eta<0\) appears only in the contraction
estimates above, at which point \(\eta_r\) is selected.

\begin{remark}[Choice of the admissible \(\varepsilon\)-range]
After possibly decreasing \(\eps_{0,r}\), and keeping the same notation, we shall assume both
\begin{equation}\label{eq:sec4-cutoff-smallness}
C_1(r)\eps_{0,r}<r-M_H
\end{equation}
and
\[
1+\eps\eta_r\ge \frac12
\qquad\text{for all }0<\eps\le\eps_{0,r}.
\]
These reductions preserve the contraction condition in Lemma~\ref{lem:lyapunov_perron}.
\end{remark}

For the rest of this subsection set
\begin{equation}\label{eq:sec4-qstar}
q_*:=
\sup_{0<\eps\le\eps_{0,r}}
\left(
C_3(r)\frac{\eps}{1+\eps\eta_r}
+\frac{C_4(r)}{|\eta_r|}
\right)<1 .
\end{equation}
Further decreases of \(\eps_{0,r}\) below preserve \(q_*<1\), after replacing \(q_*\) by the
corresponding supremum on the smaller interval.

The auxiliary fixed-point estimates and differentiability statements used in the proof below are collected in Lemmas~\ref{lem:sec4-variational-lp}--\ref{lem:sec4-derivative-closeness} in Appendix~\ref{app:lp-technical}.

\begin{remark}
  The coordinate change to rectified variables is classical; in the dynamical-systems literature it appears at least as far back as \cite{neishtadt1987persistence,neishtadt1988persistence}.
\end{remark}
\begin{remark}
  Using the change of variables also removes a constraint of the earlier papers \cite{Hummel-Kuehn,Kuehn-Sulzbach}, where one assumes that the Lipschitz constant of the nonlinearity in the fast variable is less than one.
  Moreover, as we will see in the next step, the new coordinate system also simplifies the estimates and computations in establishing the Lyapunov--Perron slow-manifold properties. 
\end{remark}
Now we can state the Lyapunov--Perron slow-manifold theorem for this setting, with the
slow manifold understood as the invariant graph selected by the Lyapunov--Perron construction.
\begin{theorem}[Existence and Regularity of the Lyapunov--Perron Slow Manifold]\label{thm:sec4-slow-manifold}
Fix a cut-off level \(r>M_H\), and let \(\eta_r<0\) and \(\eps_{0,r}>0\) be given by Lemma~\ref{lem:lyapunov_perron}. Then, for each \(\eps\in(0,\eps_{0,r}]\), there exists a global Lipschitz map
\[
h_\eps:X_\theta\to X_a
\]
whose Lipschitz constant is bounded uniformly for \(\eps\in(0,\eps_{0,r}]\), and which satisfies the following properties.
\begin{enumerate}[label=\textup{(\roman*)},leftmargin=2.5em]
\item The set
\begin{equation}\label{eq:Me-def}
S_\varepsilon := \{(h_\varepsilon(\theta),\theta):\theta\in X_\theta\}\subset X_a\times X_\theta
\end{equation}
is an invariant manifold for the original system
\eqref{eq:abstract-fastreaction}--\eqref{eq:abstract-slowterm}, namely the
Lyapunov--Perron selected slow manifold used throughout the sequel.
\item The graph of the slow manifold is \(O(\eps)\)-close to the graph of the critical manifold:
\begin{equation}\label{eq:close-final}
\norm{h_\eps(\theta)-h_0(\theta)}_{X_a}\le C_r\eps
\qquad\text{for all } \theta\in X_\theta,
\end{equation}
where \(C_1(r)\) is defined in \eqref{eq:sec4-Cconstants}; in particular, one may
take \(C_r=C_1(r)\).
\item 
The graph map \(h_\varepsilon:X_\theta\to X_a\) is \(C^1\). Moreover, after possibly decreasing
\(\varepsilon_{0,r}\), there is a constant \(C_{D,r}>0\), independent of
\(\varepsilon\in(0,\varepsilon_{0,r}]\), such that
\[
\sup_{\theta\in X_\theta}
\|\DD h_\varepsilon(\theta)-\DD h_0(\theta)\|_{\mathcal B(X_\theta,X_a)}
\le C_{D,r}\varepsilon .
\]
Consequently \(S_\varepsilon\) is a \(C^1\)-manifold.

\item Let \((a,\theta)\) be a solution of
\eqref{eq:abstract-fastreaction}--\eqref{eq:abstract-slowterm} on \([0,T]\), the original
system without cut-off. Then there exists \(\mu_r>0\) such that
\begin{equation}
\label{eq:fast-attraction}
\|a(t)-h_\eps(\theta(t))\|_{X_a}
\le
e^{-\mu_r t/\eps}\,\|a_0-h_\eps(\theta_0)\|_{X_a},
\qquad t\in [0,T].
\end{equation}
Moreover, let \((\bar a,\bar\theta)\) denote the solution on \(S_{\eps}\) with
\[
\bar\theta(0)=\theta(0),
\qquad
\bar a(0)=h_\eps(\theta(0)).
\]
Then, for every \(\rho>0\) and \(T>0\), there exists a constant \(C_{r,T,\rho}>0\), independent of
\(\varepsilon\in(0,\varepsilon_{0,r}]\) and of the particular solution with
\(\|a_0\|_{X_a}\le \rho\), such that
\begin{equation}
\label{eq:theta-comparison}
\sup_{0\le t\le T}\|\theta(t)-\bar\theta(t)\|_{X_\theta}
\le
C_{r,T,\rho}\,\eps\,\|a_0-h_\eps(\theta_0)\|_{X_a},
\end{equation}
and
\begin{equation}
\label{eq:a-comparison}
\|a(t)-\bar a(t)\|_{X_a}
\le
e^{-\mu_r t/\eps}\,\|a_0-h_\eps(\theta_0)\|_{X_a}
+
C_{r,T,\rho}\,\eps\,\|a_0-h_\eps(\theta_0)\|_{X_a},
\qquad t\in [0,T].
\end{equation}
\end{enumerate}
\end{theorem}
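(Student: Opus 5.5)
The plan is to define the graph through the Lyapunov--Perron fixed point of Lemma~\ref{lem:lyapunov_perron}: for each anchor \(\theta_0\), let \((\xi^*,\theta^*)=(\xi^*_{\theta_0},\theta^*_{\theta_0})\in\mathcal X_{\eta_r}\) be the unique fixed point of \(\mathcal L_{\eps,\theta_0,r}\). We set \(\tilde h_\eps(\theta_0):=\xi^*(0)\) and \(h_\eps(\theta_0):=h_0(\theta_0)+\tilde h_\eps(\theta_0)\). The contraction gives \(\|(\xi^*_{\theta_0}-\xi^*_{\tilde\theta_0},\theta^*_{\theta_0}-\theta^*_{\tilde\theta_0})\|_{\mathcal X_{\eta_r}}\le(1-q_*)^{-1}\|\theta_0-\tilde\theta_0\|_{X_\theta}\), because the anchor enters only additively in \(\mathcal L^{(2)}\). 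Hence \(\tilde h_\eps\) is globally Lipschitz with constant \((1-q_*)^{-1}\), uniformly in \(\eps\), and adding the \(2\Mdh\)-Lipschitz map \(h_0\) bounds the Lipschitz constant of \(h_\eps\).

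Closeness (ii) follows directly from the first bound in the proof of Lemma~\ref{lem:lyapunov_perron}: \(\|\xi^*(0)\|_{X_a}\le C_1(r)\eps/(1+\eps\eta_r)\). We may take \(C_r\) to be this, or \(C_1(r)\) after normalisation; the factor \(1/(1+\eps\eta_r)\le2\) only affects the constant. By \eqref{eq:sec4-cutoff-smallness}, \(\|h_\eps(\theta)\|_{X_a}\le M_H+C_1(r)\eps<r\), so on the graph the cut-off \(\mathsf N_r\) is inactive.

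For invariance (i), I would use uniqueness of the fixed point. The backward trajectory \(t\mapsto(\xi^*(t),\theta^*(t))\) solves the truncated system \eqref{eq:sec4-transformed-fast-r}--\eqref{eq:sec4-transformed-slow-r}. Re-anchoring at \(\theta^*(\tau)\), \(\tau\le0\), and using a time shift shows \(\xi^*(\tau)=\tilde h_\eps(\theta^*(\tau))\), which gives backward invariance. Forward invariance follows by extending the forward solution and using that shifted trajectories stay in \(\mathcal X_{\eta_r}\). Since the graph lies in \(\{\|a\|<r\}\), where \(\mathcal F_r=\mathcal F\), invariance transfers to the original system. Independence of \(r\) follows because two cut-off levels yield fixed points agreeing on the inactive region, by uniqueness in the larger weighted space.

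The main obstacle is the \(C^1\) regularity and the derivative estimate (iii). I would first differentiate the fixed-point equation formally in \(\theta_0\) to obtain the variational Lyapunov--Perron equation, a linear contraction with the same rate \(q_*\) using \(\DD A_r,\DD B_r\) from Lemma~\ref{lem:sec4-Ar-Br-C1}. Differentiability of the fixed-point map then has to be justified. On \(L^\infty\), Nemytskii maps are \(C^1\) only because \(H\in C_b^2\) and \(\sigma_r\in C_b^\infty\), as in Lemma~\ref{lem:sec4-nemytskii-calculus}. The usual fibre-contraction or two-weight argument would be carried out as in Lemmas~\ref{lem:sec4-variational-lp}--\ref{lem:sec4-derivative-closeness}. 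Applying the same \(\eps/(1+\eps\eta_r)\) estimate to the first component of the variational equation then gives \(\|\DD\tilde h_\eps\|\le C_{D,r}\eps\).

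For (iv), I would pass to the coordinate \(\zeta=a-h_\eps(\theta)\). Differentiating along solutions and using the invariance equation \(\eps\DD h_\eps(\theta)\mathcal F(\theta,h_\eps(\theta))=-h_\eps(\theta)+h_0(\theta)\) gives \(\eps\partial_t\zeta=-\zeta-\eps\DD h_\eps(\theta)[\mathcal F(\theta,h_\eps+\zeta)-\mathcal F(\theta,h_\eps)]\). The last term is bounded by \(\eps(2\Mdh+C_{D,r}\eps)M_\Gamma\|\zeta\|\). A Duhamel or Gr\"onwall estimate then yields the decay rate \(\mu_r=1-O(\eps)\), which is positive after shrinking \(\eps_{0,r}\).

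The \(\theta\)-comparison uses Proposition~\ref{prop:sec2-stability}-type reasoning. The difference \(\theta-\bar\theta\) satisfies an equation driven by \(M_\Gamma\|\zeta(s)\|\) plus Lipschitz terms. Integrating \(e^{-\mu_r s/\eps}\) gives a factor \(\eps/\mu_r\), and Gr\"onwall on \([0,T]\) gives \eqref{eq:theta-comparison}, with \(a\) bounded by \(\max\{\rho,M_H\}\) via Lemma~\ref{lem:sec2-a-bound}. Finally, \eqref{eq:a-comparison} follows by writing \(a-\bar a=\zeta+h_\eps(\theta)-h_\eps(\bar\theta)\) and using the uniform Lipschitz bound on \(h_\eps\).
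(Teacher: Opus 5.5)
Your proposal follows the paper's proof essentially step by step: the graph is the time-zero value of the Lyapunov--Perron fixed point, the Lipschitz bound comes from the anchor entering additively, invariance follows from time shifts and concatenation together with uniqueness of the fixed point and inactivity of the cut-off, (iii) rests on the variational Lyapunov--Perron system of the appendix, and (iv) is the Gr\"onwall argument for \(a-h_\varepsilon(\theta)\) via the invariance equation.

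The only slip is in (ii). Since \(\eta_r<0\), the weighted bound gives \(C_1(r)\varepsilon/(1+\varepsilon\eta_r)\ge C_1(r)\varepsilon\). This neither yields \(C_r=C_1(r)\) nor justifies your next step, where you derive \(\|h_\varepsilon(\theta)\|_{X_a}\le M_H+C_1(r)\varepsilon<r\) from \eqref{eq:sec4-cutoff-smallness}. The fix, as in the paper, is to use that the integrand of \(\mathcal L^{(1)}\) is bounded by \(C_1(r)\) for every \(s\), so \(\|\xi^*(t)\|_{X_a}\le C_1(r)\int_{-\infty}^t e^{-(t-s)/\varepsilon}\,\dd s=\varepsilon C_1(r)\) for all \(t\le0\), with no weight.
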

The dynamics on the slow manifold are then given by
\begin{align}\label{eq:slow-dynamics}
\partial_t \theta&= \mathcal{F}\bigl(\theta,h_\eps(\theta)\bigr),\quad \theta(0)=\theta_0\in X_\theta.
\end{align}

\begin{proof}
Fix the cut-off level \(r>M_H\), and let \(\eta_r<0\), \(\eps_{0,r}>0\) be given by Lemma~\ref{lem:lyapunov_perron}. 
For \(\theta_0\in X_\theta\) and \(\eps\in(0,\eps_{0,r}]\), denote by
\[
(\xi_{\eps,r}^{\theta_0},\theta_{\eps,r}^{\theta_0})
\]
the unique fixed point of \(\mathcal L_{\eps,\theta_0,r}\) in
{\(\mathcal X_{\eta_r}\)}. 

\emph{Property (i):}
Define
\begin{equation}\label{eq:he-def}
h_{\eps,r}(\theta_0):=h_0(\theta_0)+\xi_{\eps,r}^{\theta_0}(0),
\end{equation}
and
\[
a_{\eps,r}^{\theta_0}(t):=h_0(\theta_{\eps,r}^{\theta_0}(t))+\xi_{\eps,r}^{\theta_0}(t),\qquad t\le 0.
\]
By construction, \((a^{\theta_0}_{\varepsilon,r},\theta^{\theta_0}_{\varepsilon,r})\) is a mild solution
of the truncated system on \((-\infty,0]\), with
\[ \theta^{\theta_0}_{\varepsilon,r}(0)=\theta_0,\qquad a^{\theta_0}_{\varepsilon,r}(0)=h_{\varepsilon,r}(\theta_0). \]
This yields the graph representation.

Before claiming invariance for the original system, we first show that the cut-off is
inactive on the Lyapunov--Perron orbit. Because
\((\xi_{\eps,r}^{\theta_0},\theta_{\eps,r}^{\theta_0})\) is a fixed point,
\begin{align*}
\norm{\xi_{\eps,r}^{\theta_0}(t)}_{X_a}
&\leq \int_{-\infty}^t e^{-\eps^{-1}(t-s)}
\|\DD h_0(\theta_{\eps,r}^{\theta_0}(s))B_r(\xi_{\eps,r}^{\theta_0}(s),
\theta_{\eps,r}^{\theta_0}(s))\|_{X_a}\,\dd s\\
&\leq C_1(r)\int_{-\infty}^t e^{-\eps^{-1}(t-s)}\,\dd s
=\eps C_1(r)
\end{align*}
for all \(t\le 0\). Hence, at \(t=0\),
\begin{equation}\label{eq:sec4-initial-graph-close}
\norm{h_{\eps,r}(\theta_0)-h_0(\theta_0)}_{X_a}
= \norm{\xi_{\eps,r}^{\theta_0}(0)}_{X_a}\le \eps C_1(r),
\end{equation}
and for every \(t\le 0\),
\[
\|a_{\eps,r}^{\theta_0}(t)\|_{X_a}
\le \|h_0(\theta_{\eps,r}^{\theta_0}(t))\|_{X_a}
+\|\xi_{\eps,r}^{\theta_0}(t)\|_{X_a}
\le M_H+\eps C_1(r)<r
\]
by \eqref{eq:sec4-cutoff-smallness}. Therefore
\(\mathsf{N}_r(a_{\eps,r}^{\theta_0}(t))=a_{\eps,r}^{\theta_0}(t)\) for all \(t\le 0\), so the
truncated orbit is also an orbit of the original transformed system
\eqref{eq:sec4-transformed-fast}--\eqref{eq:sec4-transformed-slow}. We set
\[
h_\eps:=h_{\eps,r},\qquad
\theta_\eps^{\theta_0}:=\theta_{\eps,r}^{\theta_0},\qquad
a_\eps^{\theta_0}:=a_{\eps,r}^{\theta_0}.
\]

Next, we show the Lipschitz continuity of the map $h_\eps$.
Let \(\theta_0,\widetilde\theta_0\in X_\theta\), and write
\[
(\xi,\theta):=(\xi_{\eps,r}^{\theta_0},\theta_{\eps,r}^{\theta_0}),
\qquad
(\widetilde \xi,\widetilde \theta):=(\xi_{\eps,r}^{\widetilde \theta_0},\theta_{\eps,r}^{\widetilde \theta_0}).
\]
Using the fixed-point identities from Lemma \ref{lem:lyapunov_perron} we obtain
\[
{\norm{(\xi,\theta)-(\widetilde \xi,\widetilde \theta)}_{\mathcal X_{\eta_r}}}
\le q\,{\norm{(\xi,\theta)-(\widetilde \xi,\widetilde \theta)}_{\mathcal X_{\eta_r}}} + \norm{\theta_0-\widetilde \theta_0}_{X_\theta},
\]
where \(q:=q_{\eps,\eta_r,r}<1\). Hence
\begin{equation}\label{eq:fp-lip}
{\norm{(\xi,\theta)-(\widetilde \xi,\widetilde \theta)}_{\mathcal X_{\eta_r}}}
\le \frac{1}{1-q}\norm{\theta_0-\widetilde \theta_0}_{X_\theta}.
\end{equation}
By \eqref{eq:sec4-qstar}, \(q\le q_*<1\), so \(1/(1-q)\le 1/(1-q_*)\)
uniformly in \(\eps\). Using the Lipschitz estimate for \(h_0\), we conclude that
\begin{align*}
\norm{h_{\eps,r}(\theta_0)-h_{\eps,r}(\widetilde \theta_0)}_{X_a}
&\le \norm{h_0(\theta_0)-h_0(\widetilde \theta_0)}_{X_a}+\norm{\xi_{\eps,r}^{\theta_0}(0)-\xi_{\eps,r}^{\widetilde \theta_0}(0)}_{X_a}\\
&\le \Bigl(2\Mdh+\frac{1}{1-q_*}\Bigr)\norm{\theta_0-\widetilde \theta_0}_{X_\theta}.
\end{align*}
Thus \(h_{\eps,r}\) is globally Lipschitz, with a constant uniform for \(\eps\in(0,\eps_{0,r}]\).

Concerning invariance, let \(\tau\le 0\) and define
\[
\xi_\tau(t):=\xi_{\eps,r}^{\theta_0}(t+\tau),
\qquad
\theta_\tau(t):=\theta_{\eps,r}^{\theta_0}(t+\tau).
\]
The shifted pair belongs to \(\mathcal X_{\eta_r}\). Indeed, if
\(u\) denotes either \(\xi_{\eps,r}^{\theta_0}\) or
\(\theta_{\eps,r}^{\theta_0}\), with target space \(Y=X_a\) or \(Y=X_\theta\), then with
\(s=t+\tau\)
\[
e^{-\eta_r t}\|u(t+\tau)\|_Y
=
e^{\eta_r\tau}e^{-\eta_r s}\|u(s)\|_Y
\le e^{\eta_r\tau}\|u\|_{C_{\eta_r,Y}}<\infty.
\]
A direct change of variables in the definition of
\(\mathcal L_{\eps,\theta_0,r}\) then shows that \((\xi_\tau,\theta_\tau)\) is a fixed point of
\(\mathcal L_{\eps,\theta_\eps^{\theta_0}(\tau),r}\); the weighted bound above, together with
\(1+\varepsilon\eta_r>0\), is the condition ensuring convergence of the improper fast integral at
\(-\infty\). By Lemma~\ref{lem:lyapunov_perron}, the fixed point in
\(\mathcal X_{\eta_r}\) is unique. Hence
\[
h_\eps(\theta_\eps^{\theta_0}(\tau))
= h_0(\theta_\eps^{\theta_0}(\tau))+\xi_\tau(0)
= h_0(\theta_\eps^{\theta_0}(\tau))+\xi_{\eps,r}^{\theta_0}(\tau)
= a_\eps^{\theta_0}(\tau).
\]
Thus every Lyapunov--Perron backward orbit lies on the graph. Forward invariance for the truncated system is the content of Lemma~\ref{lem:sec4-forward-invariance} below: applied with anchor \(\theta_\ast\), it states that the unique forward solution \((\widetilde a,\widetilde\theta)\) of the truncated system on \([0,T]\) starting at \((h_\varepsilon(\theta_\ast),\theta_\ast)\) satisfies \(\widetilde a(\tau)=h_\varepsilon(\widetilde\theta(\tau))\) for every \(\tau\in[0,T]\). The proof, recorded separately below, is the standard six-step concatenation argument: concatenate the backward Lyapunov--Perron orbit anchored at \(\theta_\ast\) with the forward solution into a single curve, observe that translating in time produces another fixed point of the Lyapunov--Perron operator, and apply uniqueness of the fixed point. The estimate proved
above,
\[
\|h_\varepsilon(\theta)\|_{X_a}\le M_H+C_1(r)\varepsilon<r,
\]
shows that the cut-off is inactive on the resulting orbit, so it is also a solution of the
original (uncut-off) system. Uniqueness of the original Cauchy problem then gives forward
invariance of \(S_\varepsilon\) for the original dynamics on \([0,T]\); since \(T\) was arbitrary,
this holds for all \(t\ge 0\).

\emph{Property (ii).}
The estimate already obtained at \(t=0\),
namely \eqref{eq:sec4-initial-graph-close},
is \eqref{eq:close-final} with \(C_r=C_1(r)\).

\emph{Property (iii).}

By definition,
\[
h_\eps(\theta_0)=h_0(\theta_0)+\Theta_\eps(\theta_0).
\]
Lemma~\ref{lem:sec4-evaluation-C1} shows that \(\Theta_\eps\) is \(C^1\) and that
\[
\DD h_\eps(\theta_0)\phi
=
\DD h_0(\theta_0)\phi+U_{\theta_0}^{\phi}(0).
\]
Therefore \(h_\eps\in C^1(X_\theta,X_a)\). Lemma~\ref{lem:sec4-derivative-closeness} gives
\[
\sup_{\theta_0\in X_\theta}
\|\DD h_\eps(\theta_0)-\DD h_0(\theta_0)\|_{\mathcal B(X_\theta,X_a)}
\le
\frac{2C_3(r)}{1-q_*}\,\eps .
\]
Thus property (iii) holds with
\[
C_{D,r}:=\frac{2C_3(r)}{1-q_*},
\]
and \(S_\eps\) is a \(C^1\)-manifold.

\emph{Property (iv).}
By the cut-off inactivity estimate proved in Property (i),
\(\mathcal F_r(\theta,h_\eps(\theta))
=\mathcal F(\theta,h_\eps(\theta))\); hence the slow-manifold identities used below are the
same for the truncated and original systems.
Let
\[
L:=2\Mdh+\frac{1}{1-q_*}.
\]
By the previous properties
\[
L_{h,\eps,r}:=\sup\|\DD h_\eps(\vartheta)\|_{\mathcal B(X_\theta,X_a)}\le 2\Mdh+\frac{1}{1-q_*}=:L_r<\infty,
\]
for all \(\varepsilon\in(0,\varepsilon_{0,r}]\).
After possibly shrinking \(\varepsilon_{0,r}\) once more, define
\[ \mu_r:=1-\varepsilon_{0,r}M_\Gamma L_r>0. \]
Hence, for every \(\varepsilon\in(0,\varepsilon_{0,r}]\),
\[ \varepsilon^{-1}-M_\Gamma L_{h,\varepsilon,r} \ge \varepsilon^{-1}-M_\Gamma L_r = \frac{1-\varepsilon M_\Gamma L_r}{\varepsilon} \ge \frac{\mu_r}{\varepsilon}. \]

Now let \((a,\theta)\) be a solution of the original system on \([0,T]\), and set
\[
d(t):=a(t)-h_\eps(\theta(t)).
\]
Invariance of the graph gives
\[
\DD h_\eps(\theta)\,\mathcal{F}(\theta,h_\eps(\theta))=\eps^{-1}\bigl(-h_\eps(\theta)+h_0(\theta)\bigr),
\]
so subtracting the equation for \(h_\eps(\theta(t))\) from the equation for \(a(t)\) yields
\[
\partial_t d
=
-\eps^{-1}d
-\DD h_\eps(\theta)\Bigl(\mathcal{F}(\theta,h_\eps(\theta)+d)-\mathcal{F}(\theta,h_\eps(\theta))\Bigr).
\]
Since \(\mathcal{F}\) is globally Lipschitz in its second variable with constant \(M_\Gamma\),
\[
\|\mathcal{F}(\theta,h_\eps(\theta)+d)-\mathcal{F}(\theta,h_\eps(\theta))\|_{X_\theta}\le M_\Gamma \|d\|_{X_a}.
\]
Therefore the upper Dini derivative of \(\|d(t)\|_{X_a}\) satisfies
\[
D^+\|d(t)\|_{X_a}
\le -\Bigl(\eps^{-1}-M_\Gamma L_{h,\eps,r}\Bigr)\|d(t)\|_{X_a}
\le -\frac{\mu_r}{\eps}\|d(t)\|_{X_a},
\]
and Gronwall's inequality gives \eqref{eq:fast-attraction}.

Next, let \((\bar a,\bar\theta)\) be the solution on \(S_{\eps}\) with \(\bar\theta(0)=\theta(0)\), and define
\[
\delta\theta(t):=\theta(t)-\bar\theta(t).
\]
Then
\[
\partial_t\delta\theta
=
\mathcal{F}(\theta,h_\eps(\theta)+d)-\mathcal{F}(\bar\theta,h_\eps(\bar\theta)).
\]
Let
\[ R_a:=\max\{\|a_0\|_{X_a},M_H,r\}. \]
By the a priori bound from Section 2, \(\|a(t)\|_{X_a}\le \max\{\|a_0\|_{X_a},M_H\}\), while
\(\|\bar a(t)\|_{X_a}\le r\). Hence
\[ \|\partial_t\delta\theta(t)\|_{X_\theta} \le M_\Gamma\|d(t)\|_{X_a} + (M_\Gamma L_{h,\varepsilon,r}+2R_aL_\Gamma)\|\delta\theta(t)\|_{X_\theta} . \]
Since \(R_a\le \max\{\rho,M_H,r\}\) under the standing hypothesis \(\|a_0\|_{X_a}\le \rho\), the Gronwall coefficient \(M_\Gamma L_{h,\varepsilon,r}+2R_a L_\Gamma\) is bounded by a constant that depends only on \(r\), \(\rho\), and the data of the problem. Consequently the constant \(C_{r,T,\rho}\) appearing below depends on \(r\), \(T\), and \(\rho\), but is otherwise independent of the particular initial datum \((a_0,\theta_0)\).
Integrating in time, using \(\delta\theta(0)=0\), and applying \eqref{eq:fast-attraction} and Gronwall's inequality, we obtain
\[
\sup_{0\le t\le T}\|\delta\theta(t)\|_{X_\theta}
\le C_{r,T,\rho}\int_0^T e^{-\mu_r s/\eps}\,\dd s\,\|d(0)\|_{X_a}
\le C_{r,T,\rho}\,\eps\,\|d(0)\|_{X_a},
\]
which is \eqref{eq:theta-comparison}. Finally,
\[
a(t)-\bar a(t)=d(t)+h_\eps(\theta(t))-h_\eps(\bar\theta(t)),
\]
so the Lipschitz bound for \(h_\eps\), together with \eqref{eq:fast-attraction} and \eqref{eq:theta-comparison}, yields \eqref{eq:a-comparison}.
\end{proof}

We now record separately the forward-invariance concatenation argument used in Property~(i) above. It is reusable: the same step structure transfers any Lyapunov--Perron backward orbit into a forward-invariance statement on \([0,T]\), once a forward solution of the truncated system is available with matching initial datum.

\begin{lemma}[Forward-invariance concatenation]\label{lem:sec4-forward-invariance}
Fix a cut-off level \(r>M_H\) and \(\varepsilon\in(0,\varepsilon_{0,r}]\), and let \(h_{\varepsilon,r}=h_\varepsilon\) be the Lyapunov--Perron graph from the construction of Theorem~\ref{thm:sec4-slow-manifold}. For every \(\theta_\ast\in X_\theta\) and every \(T>0\), the unique forward solution \((\widetilde a,\widetilde\theta):[0,T]\to X_a\times X_\theta\) of the transformed truncated system \eqref{eq:sec4-transformed-fast-r}--\eqref{eq:sec4-transformed-slow-r} with initial datum \((\widetilde a(0),\widetilde\theta(0))=(h_\varepsilon(\theta_\ast),\theta_\ast)\) satisfies
\[
\widetilde a(\tau)=h_\varepsilon(\widetilde\theta(\tau))\qquad\text{for every }\tau\in[0,T].
\]
\end{lemma}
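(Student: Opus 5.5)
The plan is the standard concatenation argument, carried out in rectified coordinates \((\xi,\theta)\) with \(\xi=a-h_0(\theta)\).

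\emph{Step 1: the forward solution.} The truncated vector field in \eqref{eq:sec4-transformed-fast-r}--\eqref{eq:sec4-transformed-slow-r} is globally Lipschitz, with constants \(C_3(r)\) and \(C_4(r)\) plus the linear term \(-\varepsilon^{-1}\xi\). Hence Picard--Lindel\"of gives a unique global forward solution \((\widetilde\xi,\widetilde\theta)\) on \([0,T]\), starting from \(\widetilde\xi(0)=\Theta_\varepsilon(\theta_\ast)=\xi^{\theta_\ast}_{\varepsilon,r}(0)\) and \(\widetilde\theta(0)=\theta_\ast\). Its mild form is
\[
\widetilde\xi(t)=e^{-t/\varepsilon}\widetilde\xi(0)-\int_0^t e^{-(t-s)/\varepsilon}A_r(\widetilde\xi(s),\widetilde\theta(s))\,\dd s .
\]

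\emph{Step 2: concatenate and shift.} Let \((\xi^-,\theta^-)\) be the Lyapunov--Perron fixed point anchored at \(\theta_\ast\). Define the concatenated curve \((\hat\xi,\hat\theta)\) on \((-\infty,T]\), equal to \((\xi^-,\theta^-)\) on \((-\infty,0]\) and to \((\widetilde\xi,\widetilde\theta)\) on \([0,T]\). It is continuous because the two pieces agree at \(t=0\). Fix \(\tau\in[0,T]\) and set \((\xi_\tau,\theta_\tau)(t):=(\hat\xi,\hat\theta)(t+\tau)\) for \(t\le0\).

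\emph{Step 3: the shifted curve lies in \(\mathcal X_{\eta_r}\).} On \([-\tau,0]\) the weighted norm is bounded by the sup of a continuous function on a compact interval. For \(t\le-\tau\) the argument \(t+\tau\) is non-positive, and the shift bound from the proof of Theorem~\ref{thm:sec4-slow-manifold} applies.

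\emph{Step 4: the shifted curve is a fixed point.} The claim is that \((\xi_\tau,\theta_\tau)\) is a fixed point of \(\mathcal L_{\varepsilon,\widetilde\theta(\tau),r}\).

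For the slow component, write \(\theta_\tau(t)=\widetilde\theta(\tau)+\int_\tau^{t+\tau}B_r\,\dd s\). This holds because both pieces satisfy the integral equation anchored at \(0\), and subtracting the value at \(\tau\) re-anchors it at \(\tau\).

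For the fast component, first use the fixed-point identity for \(\xi^-\) at \(t=0\):
\[
\widetilde\xi(0)=-\int_{-\infty}^0 e^{s/\varepsilon}A_r\,\dd s .
\]
Substitute this into the mild formula of Step 1. The two integrals merge, giving
\[
\hat\xi(t)=-\int_{-\infty}^t e^{-(t-s)/\varepsilon}A_r(\hat\xi,\hat\theta)\,\dd s
\]
for all \(t\le T\). This is exactly the first Lyapunov--Perron component after the change of variables \(s\mapsto s+\tau\). Convergence at \(-\infty\) uses \(1+\varepsilon\eta_r>0\).

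\emph{Step 5: uniqueness.} Lemma~\ref{lem:lyapunov_perron} gives uniqueness of the fixed point anchored at \(\widetilde\theta(\tau)\). Therefore \(\xi_\tau(0)=\widetilde\xi(\tau)\) equals \(\Theta_\varepsilon(\widetilde\theta(\tau))\). Undoing the rectification gives \(\widetilde a(\tau)=h_0(\widetilde\theta(\tau))+\widetilde\xi(\tau)=h_\varepsilon(\widetilde\theta(\tau))\).

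The main obstacle is the bookkeeping in Step 4: merging the two fast integrals across \(t=0\), and checking that the slow anchor is correctly re-based at \(\tau\). Everything else is routine.
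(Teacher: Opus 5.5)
Your proposal is correct and follows the same route as the paper. You concatenate the backward Lyapunov--Perron orbit anchored at \(\theta_\ast\) with the forward truncated solution, shift by \(\tau\), check membership in \(\mathcal X_{\eta_r}\), verify the fixed-point equations with anchor \(\widetilde\theta(\tau)\), and conclude by uniqueness from Lemma~\ref{lem:lyapunov_perron}. Your Step 4 merges the two fast integrals explicitly, using the fixed-point identity for \(\xi^-\) at \(t=0\); this is a more concrete version of the paper's remark that the homogeneous boundary term at \(-\infty\) vanishes.
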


\begin{proof}
We concatenate the backward Lyapunov--Perron orbit anchored at \(\theta_\ast\) with the forward solution and apply uniqueness of the Lyapunov--Perron fixed point.
\begin{enumerate}[label=\textup{(\roman*)},leftmargin=2.5em]
\item Let \((\xi^{\theta_\ast}_{\varepsilon,r},\theta^{\theta_\ast}_{\varepsilon,r})\) denote the backward Lyapunov--Perron fixed point anchored at \(\theta_\ast\), so that
\[
a^{\theta_\ast}_{\varepsilon,r}(t):=h_0(\theta^{\theta_\ast}_{\varepsilon,r}(t))+\xi^{\theta_\ast}_{\varepsilon,r}(t),\qquad t\le 0,
\]
is a backward solution of the truncated system with \((a^{\theta_\ast}_{\varepsilon,r}(0),\theta^{\theta_\ast}_{\varepsilon,r}(0))=(h_\varepsilon(\theta_\ast),\theta_\ast)\).
\item Existence and uniqueness of the forward solution \((\widetilde a,\widetilde\theta)\) on \([0,T]\) are guaranteed by the global Lipschitz bounds for  the truncated rectified fields \(B_r\) and \(A_r=\DD h_0\,B_r\) imposed by the cut-off \(\mathsf N_r\).
\item Concatenate the two pieces into the single curve
\[
\gamma\colon(-\infty,T]\to X_a\times X_\theta,\qquad
\gamma(t):=
\begin{cases}
(a^{\theta_\ast}_{\varepsilon,r}(t),\theta^{\theta_\ast}_{\varepsilon,r}(t)), & t\le 0,\\
(\widetilde a(t),\widetilde\theta(t)), & 0\le t\le T,
\end{cases}
\]
which agrees on \(t=0\) and is continuous; it is a mild solution of the truncated system on \((-\infty,T]\).
\item Define \(\widetilde\xi(t):=\widetilde a(t)-h_0(\widetilde\theta(t))\) on \([0,T]\). For fixed \(\tau\in[0,T]\), set, for \(t\le0\),
\[
(\xi_\tau(t),\theta_\tau(t)):=
\begin{cases}
(\xi^{\theta_\ast}_{\varepsilon,r}(t+\tau),\theta^{\theta_\ast}_{\varepsilon,r}(t+\tau)), & t+\tau\le 0,\\
(\widetilde\xi(t+\tau),\widetilde\theta(t+\tau)), & 0\le t+\tau\le T.
\end{cases}
\]
This is a backward mild solution of the transformed truncated system \eqref{eq:sec4-transformed-fast-r}--\eqref{eq:sec4-transformed-slow-r}, anchored at \(\widetilde\theta(\tau)\).
\item The pair \((\xi_\tau,\theta_\tau)\) lies in
\(\mathcal X_{\eta_r}\). On the part \(t+\tau\le0\), this follows from the same weighted
translation estimate used above. On the finite interval \(-\tau\le t\le0\), the concatenated
forward solution is continuous and the factor \(e^{-\eta_r t}\) is bounded, so the weighted norm is
finite.
\item Since \((\xi_\tau,\theta_\tau)\) is a backward mild solution and belongs to
\(\mathcal X_{\eta_r}\), the improper fast integral at \(-\infty\) is legitimate; equivalently,
the homogeneous boundary term there vanishes. Hence \((\xi_\tau,\theta_\tau)\) satisfies the
Lyapunov--Perron fixed-point equations with anchor \(\widetilde\theta(\tau)\). By the uniqueness
statement in Lemma~\ref{lem:lyapunov_perron}, it is the Lyapunov--Perron fixed point anchored at
\(\widetilde\theta(\tau)\). Evaluating at \(t=0\) gives
\[
\widetilde a(\tau)=h_0(\widetilde\theta(\tau))+\xi_\tau(0)=h_\varepsilon(\widetilde\theta(\tau)),
\]
so the forward orbit lies on the graph for every \(\tau\in[0,T]\).
\end{enumerate}
\end{proof}

The following lemma allows us to write the slow manifold independently of the cut-off level \(r\).
\begin{lemma}[Cut-off is inactive on the slow manifold]\label{lem:sec4-cutoff-inactive}
For every \(\theta\in X_\theta\) and every \(\eps\in(0,\eps_{0,r}]\),
\[
\|h_\eps(\theta)\|_{X_a}<r.
\]
Consequently,
\[
\mathsf{N}_r(h_\eps(\theta))=h_\eps(\theta),
\qquad
\mathcal{F}_r(\theta,h_\eps(\theta))=\mathcal{F}(\theta,h_\eps(\theta)).
\]
\end{lemma}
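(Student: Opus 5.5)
The bound is a triangle-inequality argument built on the closeness estimate already established in property (ii) of Theorem~\ref{thm:sec4-slow-manifold}. For every \(\theta\in X_\theta\), the graph is defined by \(h_\eps(\theta)=h_0(\theta)+\xi^{\theta}_{\eps,r}(0)\). Here the fixed-point bound in the proof of property (i) gives
\[
\|\xi^{\theta}_{\eps,r}(0)\|_{X_a}\le \eps C_1(r),
\]
see \eqref{eq:sec4-initial-graph-close}. On the other hand, \(h_0(\theta)(x,y)=H(\theta(x),\theta(y))\), so pointwise a.e. \(|h_0(\theta)(x,y)|\le M_H\), hence \(\|h_0(\theta)\|_{X_a}\le M_H\). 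Combining the two estimates yields
\[
\|h_\eps(\theta)\|_{X_a}\le M_H+\eps C_1(r)\le M_H+\eps_{0,r}C_1(r)<r,
\]
where the last strict inequality is exactly the smallness condition \eqref{eq:sec4-cutoff-smallness} imposed on the admissible \(\eps\)-range. The constant \(C_1(r)=2\Mdh C_2(r)\) does not depend on \(\theta\), so the bound is uniform over \(X_\theta\).

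The consequences follow pointwise. Since \(\|h_\eps(\theta)\|_{X_a}<r\), for a.e. \((x,y)\) we have \(|h_\eps(\theta)(x,y)|\le r\). Because \(\sigma_r(z)=z\) for \(|z|\le r\), this gives \((\mathsf N_r h_\eps(\theta))(x,y)=h_\eps(\theta)(x,y)\) a.e., that is, \(\mathsf N_r(h_\eps(\theta))=h_\eps(\theta)\) in \(X_a\). Inserting this identity into the definition of \(\mathcal F_r\) makes the integrands of \(\mathcal F_r(\theta,h_\eps(\theta))\) and \(\mathcal F(\theta,h_\eps(\theta))\) coincide a.e., so the two slow fields agree.

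The only delicate point is the bookkeeping for the admissible \(\eps\)-range. Property (ii) must be used with the same \(\eps_{0,r}\) that satisfies \eqref{eq:sec4-cutoff-smallness}. Any later shrinking of \(\eps_{0,r}\), made for property (iii) or for \(\mu_r\), only preserves that inequality, so the strict bound survives throughout.
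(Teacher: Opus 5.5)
Your proposal is correct and follows the paper's own proof: the triangle inequality with $\|h_0(\theta)\|_{X_a}\le M_H$ and the $O(\varepsilon)$ graph-closeness bound, then the smallness condition \eqref{eq:sec4-cutoff-smallness} for the strict inequality. The consequences for $\mathsf{N}_r$ and $\mathcal{F}_r$ then follow pointwise a.e.\ from the essential-supremum bound, exactly as in the paper. Your remark that later shrinking of $\varepsilon_{0,r}$ preserves the strict inequality is a correct and harmless addition.
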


\begin{proof}
By \eqref{eq:close-final},
\[
\|h_\eps(\theta)\|_{X_a}
\le \|h_0(\theta)\|_{X_a}+\|h_\eps(\theta)-h_0(\theta)\|_{X_a}
\le M_H+C_1(r)\eps
\le M_H+C_1(r)\eps_{0,r}
<r
\]
by \eqref{eq:sec4-cutoff-smallness}. Since the \(L^\infty\)-norm is the essential supremum, this implies \(|h_\eps(\theta)(x,y)|<r\) for almost every \((x,y)\in\I^2\), hence \(\mathsf{N}_r(h_\eps(\theta))=h_\eps(\theta)\). The identity for \(\mathcal{F}_r\) follows immediately from its definition.
\end{proof}
\begin{remark}[Independence of the slow manifold from the cut-off level]
The graph $h_\varepsilon$ constructed above is independent of the choice of cut-off level
$r > M_H$, in the following precise sense.
Suppose $r < r'$ are two cut-off levels, both strictly greater than $M_H$, and let
$\varepsilon_0 \leq \min\{\varepsilon_{0,r}, \varepsilon_{0,r'}\}$ be a common admissibility threshold.
For every $\varepsilon \in (0, \varepsilon_0]$, Lemma~\ref{lem:sec4-cutoff-inactive} gives
\[
  \|h_{\varepsilon,r}(\theta)\|_{X_a} \leq M_H + C_1(r)\varepsilon < r < r',
\]
so the cut-off \(\mathsf{N}_{r'}\) is also inactive on the graph of $h_{\varepsilon,r}$.
Hence $h_{\varepsilon,r}$ is simultaneously an invariant graph for the $r'$-truncated system.
To deduce equality with $h_{\varepsilon,r'}$, fix \(\theta_0\in X_\theta\) and let
\((\xi^{\theta_0}_{\varepsilon,r},\theta^{\theta_0}_{\varepsilon,r})\) be the backward
Lyapunov--Perron orbit at cut-off level \(r\). As in the proof of
Theorem~\ref{thm:sec4-slow-manifold},
\[
\|\xi^{\theta_0}_{\varepsilon,r}(t)\|_{X_a}\le \varepsilon\, C_1(r),
\qquad
\|a^{\theta_0}_{\varepsilon,r}(t)\|_{X_a}\le M_H+\varepsilon C_1(r)<r<r'
\qquad (t\le 0),
\]
and the slow component grows at most linearly backward because
\(\|B_r(\xi,\theta)\|_{X_\theta}\le C_2(r)\). Hence
\[
\|\theta^{\theta_0}_{\varepsilon,r}(t)\|_{X_\theta}
\le \|\theta_0\|_{X_\theta}+C_2(r)|t|,
\qquad t\le0.
\]
Since \(\eta_{r'}<0\), both the bounded fast-rectified component and the at-most-linearly growing
slow component belong to
{\(\mathcal X_{\eta_{r'}}\)}. The same orbit stays strictly inside the
\(r'\)-cut-off-inactive region, so it solves the \(r'\)-truncated equations and therefore
satisfies the \(r'\)-Lyapunov--Perron fixed-point equations in the
{weighted product space \(\mathcal X_{\eta_{r'}}\)}. By uniqueness of the fixed point of
\(\mathcal{L}_{\varepsilon,\theta_0,r'}\) established in Lemma~\ref{lem:lyapunov_perron},
evaluating at \(t=0\) gives
\(h_{\varepsilon,r}(\theta_0)=h_{\varepsilon,r'}(\theta_0)\). Since \(\theta_0\) was arbitrary,
\(h_{\varepsilon,r}=h_{\varepsilon,r'}\) on \(X_\theta\).
In particular, the dynamics on the slow manifold
\[
  \partial_t\theta = \mathcal{F}(\theta,h_\varepsilon(\theta))
\]
are independent of $r$, and the notation $h_\varepsilon := h_{\varepsilon,r}$ introduced in
the proof of Theorem~\ref{thm:sec4-slow-manifold} is unambiguous.
The admissible range of $\varepsilon$ does depend on $r$ through the constants
$\eta_r$ and $C_1(r)$, but the manifold graph itself does not.
\end{remark}

\section{Identification of the Reduced Dynamics for Positive Epsilon}\label{sec:identification}

This section uses the manifold produced in Section~\ref{sec:infdim-reduction} to derive the reduced equation for the slow variable and to compare it with the continuum reduced model from Section~\ref{sec:reduced-model}. In particular, the main task is to show that the reduced flow on the slow manifold \(S_\varepsilon\) has the form
\begin{equation}\label{eq:sec5-goal}
\partial_t\theta = \omega + K[\theta] + \eps P[\theta;\omega] + \eps T[\theta] + R_\varepsilon^{\mathrm{CF}}[\theta],
\qquad
\|R_\varepsilon^{\mathrm{CF}}[\theta]\|_{X_\theta}\le C\varepsilon^2,
\end{equation}
in the same notation as Section~\ref{sec:reduced-model}, where $K$, $P$, and $T$ are the operators defined in \eqref{eq:sec3-Kop}--\eqref{eq:sec3-Top}.  {Starting from the transformed system \eqref{eq:sec4-transformed-fast}--\eqref{eq:sec4-transformed-slow}, we show that the continuum-first route produces the same first-order terms as those obtained by the reduce-first route.} Together with the uniform-in-\(N\) transfer in Corollary~\ref{cor:sec5-uniform-order}, this identification establishes first-order vector-field compatibility for the diagram from Section~\ref{sec:introduction}, with controlled \(O(\varepsilon^2)\) remainders.

We continue to use the Banach spaces \(X_\theta=\Linfty(\I)\) and \(X_a=\Linfty(\I^2)\), the maps \(\mathcal{F}\), \(\mathcal{G}\), \(h_0\), and the constants from Section~\ref{sec:infdim-reduction}, and Assumption~\ref{ass:section4} is in force throughout.
When the frequency profile is varied, we write \(\mathcal F_\Omega\), \(h_{\varepsilon,\Omega}\),
and \(\mathcal V_{\varepsilon,\Omega}^{\mathrm{CF}}\) for the corresponding slow vector field,
slow-manifold graph, and continuum-first reduced vector field. Constants below are uniform for
\(\Omega\) in a fixed \(L^\infty\)-bounded set.

\subsection{Expansion of the slow-manifold graph}

Let $h_\eps:X_\theta\to X_a$ be the slow-manifold graph constructed in Theorem \ref{thm:sec4-slow-manifold}.  {By Lemma~\ref{lem:sec4-cutoff-inactive}, the cut-off is inactive on the slow manifold, so \(\mathcal{F}_r(\theta,h_\eps(\theta))=\mathcal{F}(\theta,h_\eps(\theta))\) in \(X_\theta\). 
We therefore write \(\mathcal{F}\) rather than \(\mathcal{F}_r\) throughout this section.} On the slow manifold \(S_\varepsilon\), the fast variable satisfies $a=h_\eps(\theta)$, so any trajectory $(\bar a,\bar\theta)$ lying on the manifold obeys $\bar a(t)=h_\eps(\bar\theta(t))$. Differentiating in time and using the fast equation gives the invariance identity.

\begin{proposition}[Invariance equation]\label{prop:sec5-invariance}
The manifold graph $h_\eps$ satisfies
\begin{equation}\label{eq:sec5-invariance}
-h_\eps(\theta)+h_0(\theta)
=
\eps\,\DD h_\eps(\theta)\,\mathcal{F}\!\big(\theta,h_\eps(\theta)\big)
\end{equation}
for every \(\theta\in X_\theta\), where \(h_0(\theta)(x,y)=H\!\big(\theta(x),\theta(y)\big)\) is the critical-manifold graph.
\end{proposition}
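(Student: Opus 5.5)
The plan is to derive the invariance identity by differentiating along orbits that lie on \(S_\varepsilon\), using three facts from Theorem~\ref{thm:sec4-slow-manifold}: invariance of the graph (property (i)), \(C^1\)-regularity of \(h_\varepsilon\) (property (iii)), and inactivity of the cut-off (Lemma~\ref{lem:sec4-cutoff-inactive}). Fix \(\theta_\ast\in X_\theta\) and let \((\bar a,\bar\theta)\) be the solution of \eqref{eq:abstract-fastreaction}--\eqref{eq:abstract-slowterm} with data \((h_\varepsilon(\theta_\ast),\theta_\ast)\). By Theorem~\ref{thm:sec2-wp} this solution exists globally, and by Lemma~\ref{lem:sec4-forward-invariance} (combined with cut-off inactivity) it satisfies \(\bar a(t)=h_\varepsilon(\bar\theta(t))\) for all \(t\ge0\).

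Next I will check time regularity. On the graph the slow equation reads \(\partial_t\bar\theta=\mathcal F(\bar\theta,h_\varepsilon(\bar\theta))\). Its right-hand side is continuous in \(t\), because \(\mathcal F\) is locally Lipschitz (Lemma~\ref{lem:sec2-lipschitz}) and \(h_\varepsilon\) is Lipschitz. Hence \(\bar\theta\in C^1([0,\infty);X_\theta)\) in the classical sense, not only \(W^{1,\infty}\). Since \(h_\varepsilon\) is \(C^1\), the Fréchet chain rule gives
\[
\partial_t\bar a(t)=\DD h_\varepsilon(\bar\theta(t))\,\mathcal F\bigl(\bar\theta(t),h_\varepsilon(\bar\theta(t))\bigr)
\]
for every \(t\ge0\), with the right derivative at \(t=0\). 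Likewise, the fast equation shows that \(\bar a\) is \(C^1\), with
\[
\varepsilon\,\partial_t\bar a=-\bar a+h_0(\bar\theta),
\]
and the right-hand side is continuous in \(t\).

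Equating the two expressions for \(\partial_t\bar a\) at \(t=0\), where \(\bar\theta(0)=\theta_\ast\), gives
\[
-h_\varepsilon(\theta_\ast)+h_0(\theta_\ast)=\varepsilon\,\DD h_\varepsilon(\theta_\ast)\,\mathcal F\bigl(\theta_\ast,h_\varepsilon(\theta_\ast)\bigr).
\]
Since \(\theta_\ast\) was arbitrary, this is exactly \eqref{eq:sec5-invariance}.

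The main obstacle is justifying that the strong (\(W^{1,\infty}\)) solution is classically differentiable, so that the chain rule can be applied pointwise in time rather than only almost everywhere. The continuity argument for both right-hand sides settles this. An alternative route, if needed, is to use the a.e. identity together with the continuity of both sides in \(t\) and let \(t\downarrow0\).
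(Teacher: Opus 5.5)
Your proof is correct and follows the paper's own argument: take the trajectory on \(S_\varepsilon\) through \(\theta\), differentiate \(\bar a=h_\varepsilon(\bar\theta)\) by the chain rule, compare with the fast equation at \(t=0\), and use that \(\theta\) is arbitrary. Your extra step is sound and is one the paper leaves implicit. You show that the strong \(W^{1,\infty}\) solution is in fact classically \(C^1\) in time, because the right-hand sides are continuous. This makes the pointwise chain rule and the evaluation at \(t=0\) legitimate.
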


\begin{proof}
Let $(\bar a,\bar\theta)$ be the trajectory on $S_{\eps}$ with $\bar\theta(0)=\theta$. Since $\bar a(t)=h_\eps(\bar\theta(t))$, differentiating gives
\[
\partial_t \bar a = \DD h_\eps(\bar\theta)\,\partial_t\bar\theta
= \DD h_\eps(\bar\theta)\,\mathcal{F}(\bar\theta,h_\eps(\bar\theta)).
\]
On the other hand, the fast equation reads
\[
\partial_t \bar a
= \eps^{-1}\mathcal{G}(\bar\theta,h_\eps(\bar\theta))
= \eps^{-1}\big({-h_\eps(\bar\theta)+h_0(\bar\theta)}\big).
\]
Equating at $t=0$ and multiplying by $\eps$ yields \eqref{eq:sec5-invariance}.
Since \(\theta\in X_\theta\) is arbitrary and the slow manifold is invariant, one can always choose the trajectory \((\bar a, \bar\theta)\) with \(\bar\theta(0) = \theta\), so \eqref{eq:sec5-invariance} holds for every \(\theta \in X_\theta\).
\end{proof}

The invariance equation \eqref{eq:sec5-invariance} allows us to extract the first-order correction $h_1$ by a formal expansion, exactly as in the finite-dimensional case.

\begin{proposition}[First-order correction]\label{prop:sec5-h1}
Write $h_\eps(\theta)=h_0(\theta)+\eps h_1(\theta)+r_\eps(\theta)$, where $h_1:X_\theta\to X_a$ is defined by
\begin{equation}\label{eq:sec5-h1-abstract}
h_1(\theta) := -\DD h_0(\theta)\,\mathcal{F}\big(\theta,h_0(\theta)\big).
\end{equation}
Since \(\|h_0(\theta)\|_{X_a}\le M_H<r\), the cut-off is already inactive on the critical manifold, so \eqref{eq:sec5-h1-abstract} is also the first-order term obtained from the truncated invariance equation.
Then there is \(C_r>0\) such that
\(\|r_\eps(\theta)\|_{X_a}\le C_r\eps^2\) uniformly in \(\theta\in X_\theta\).

Explicitly, for a.e.\ $(x,y)\in\I^2$,
\begin{align}\label{eq:sec5-h1-explicit}
h_1[\theta](x,y)
&=
-\partial_1 H\!\big(\theta(x),\theta(y)\big)
\bigg[\omega(x)+\int_0^1 H\!\big(\theta(x),\theta(z)\big)\,
\Gamma\!\big(\theta(z)-\theta(x)\big)\,\dd z\bigg]
\nonumber\\
&\quad
-\partial_2 H\!\big(\theta(x),\theta(y)\big)
\bigg[\omega(y)+\int_0^1 H\!\big(\theta(y),\theta(z)\big)\,
\Gamma\!\big(\theta(z)-\theta(y)\big)\,\dd z\bigg].
\end{align}
\end{proposition}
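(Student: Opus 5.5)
The plan is to mirror the finite-dimensional argument of Remark~\ref{rem:sec3-order}, now with constants that are manifestly independent of \(\theta\). First I will verify the explicit formula \eqref{eq:sec5-h1-explicit}. Insert the derivative formula \eqref{eq:Dh0} with \(\varphi=\mathcal F(\theta,h_0(\theta))\) into \eqref{eq:sec5-h1-abstract}. Note that \(\mathcal F(\theta,h_0(\theta))(x)=\omega(x)+K[\theta](x)\) by \eqref{eq:sec3-Kop}. This immediately gives the two bracketed terms, evaluated at \(x\) and at \(y\).

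For the remainder bound, I will subtract the defining relation \(\varepsilon h_1=-\varepsilon\,\DD h_0(\theta)\mathcal F(\theta,h_0(\theta))\) from the invariance equation \eqref{eq:sec5-invariance} of Proposition~\ref{prop:sec5-invariance}. This yields the identity
\[
r_\varepsilon(\theta)=-\varepsilon\Bigl(\DD h_\varepsilon(\theta)\,\mathcal F(\theta,h_\varepsilon(\theta))-\DD h_0(\theta)\,\mathcal F(\theta,h_0(\theta))\Bigr).
\]
I will then split the bracket as
\[
\bigl(\DD h_\varepsilon(\theta)-\DD h_0(\theta)\bigr)\mathcal F(\theta,h_\varepsilon(\theta))
+\DD h_0(\theta)\bigl(\mathcal F(\theta,h_\varepsilon(\theta))-\mathcal F(\theta,h_0(\theta))\bigr).
\]

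Each piece is bounded uniformly in \(\theta\in X_\theta\).
\begin{itemize}
\item \emph{First piece.} Theorem~\ref{thm:sec4-slow-manifold}(iii) gives \(\|\DD h_\varepsilon-\DD h_0\|_{\mathcal B(X_\theta,X_a)}\le C_{D,r}\varepsilon\). By Lemma~\ref{lem:sec4-cutoff-inactive}, \(\|\mathcal F(\theta,h_\varepsilon(\theta))\|_{X_\theta}\le M_\omega+M_\Gamma(M_H+C_1(r)\varepsilon)\le C_2(r)\).
\item \emph{Second piece.} We have \(\|\DD h_0(\theta)\|\le 2\Mdh\), and \(\mathcal F\) is Lipschitz in its second argument with constant \(M_\Gamma\). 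Combined with Theorem~\ref{thm:sec4-slow-manifold}(ii), this gives the bound \(2\Mdh M_\Gamma C_1(r)\varepsilon\).
\end{itemize}
Multiplying by the prefactor \(\varepsilon\) yields
\[
\|r_\varepsilon(\theta)\|_{X_a}\le \bigl(C_{D,r}C_2(r)+2\Mdh M_\Gamma C_1(r)\bigr)\varepsilon^2,
\]
which is the claim with an explicit \(C_r\). The observation \(\|h_0(\theta)\|_{X_a}\le M_H<r\) shows that the cut-off plays no role in \(h_1\). Lemma~\ref{lem:sec4-cutoff-inactive} shows the same for \(h_\varepsilon\), so using \(\mathcal F\) rather than \(\mathcal F_r\) throughout is legitimate.

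The main obstacle is the uniform \(O(\varepsilon)\) closeness of the derivatives, \(\DD h_\varepsilon-\DD h_0\), in operator norm. A merely pointwise or \(o(1)\) statement would only give an \(o(\varepsilon)\) remainder, as in~\cite{KuehnMurphy}. Here, however, this closeness is supplied by Theorem~\ref{thm:sec4-slow-manifold}(iii), which rests on the appendix Lemma~\ref{lem:sec4-derivative-closeness}, so I may invoke it directly.

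Two further points need checking. First, Proposition~\ref{prop:sec5-invariance} needs \(h_\varepsilon\) to be \(C^1\) for the chain rule; this is also provided by (iii). Second, all constants must depend only on \(r\) and the model bounds, and not on \(\theta\) or on an \(L^\infty\)-bound for \(\theta\). This holds because every estimate used above is global in \(\theta\).
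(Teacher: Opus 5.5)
Your proposal is correct and follows the paper's proof essentially line by line. You use the same identity for \(r_\varepsilon\) from the invariance equation and the same split into a derivative-closeness term (Theorem~\ref{thm:sec4-slow-manifold}(iii)) and a graph-closeness term (part (ii) plus the \(M_\Gamma\)-Lipschitz bound), then substitute \eqref{eq:Dh0} for the explicit formula; the only difference is cosmetic, namely that you bound \(\|\mathcal F(\theta,h_\varepsilon(\theta))\|_{X_\theta}\) by the coarser \(C_2(r)\) (valid since \(M_H+C_1(r)\varepsilon<r\)) where the paper keeps \(M_\omega+M_\Gamma(M_H+C_1(r)\varepsilon_{0,r})\).
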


\begin{proof}
Insert $h_\eps=h_0+\eps h_1 + r_\eps$ into \eqref{eq:sec5-invariance}. The left-hand side becomes
\[
-h_0(\theta)-\eps h_1(\theta) - r_\eps(\theta)+h_0(\theta)
=
-\eps h_1(\theta) - r_\eps(\theta).
\]
By the definition \eqref{eq:sec5-h1-abstract}, the left-hand side is
\(\eps\,\DD h_0(\theta)\mathcal{F}(\theta,h_0(\theta))-r_\eps(\theta)\). Rearranging the exact invariance equation gives
\begin{align}\label{eq:sec5-r-split}
r_\eps(\theta)
&=
\eps\,\DD h_0(\theta)\,\mathcal{F}(\theta,h_0(\theta))
-\eps\,\DD h_\eps(\theta)\,\mathcal{F}(\theta,h_\eps(\theta))\nonumber\\
&=
\eps\bigl(\DD h_0(\theta)-\DD h_\eps(\theta)\bigr)
\mathcal{F}(\theta,h_\eps(\theta))
+\eps\,\DD h_0(\theta)\bigl(\mathcal{F}(\theta,h_0(\theta))
-\mathcal{F}(\theta,h_\eps(\theta))\bigr).
\end{align}
Thus the two possible sources of error are derivative-closeness and graph-closeness.

\emph{Term involving derivative closeness.} By Theorem~\ref{thm:sec4-slow-manifold}(iii),
\[
\|\DD h_\varepsilon(\theta)-\DD h_0(\theta)\|_{\mathcal B(X_\theta,X_a)}
\le C_{D,r}\varepsilon
\]
uniformly in \(\theta\). The slow vector field is uniformly bounded along the slow manifold by
\[
\|\mathcal{F}(\theta,h_\eps(\theta))\|_{X_\theta}
\le M_\omega+M_\Gamma(M_H+C_1(r)\varepsilon_{0,r});
\]
call this bound \(\overline{M}_{\mathcal F,r}\). Hence
\[
\bigl\|\bigl(\DD h_0(\theta)-\DD h_\eps(\theta)\bigr)\mathcal{F}(\theta,h_\eps(\theta))\bigr\|_{X_a}
\le C_{D,r}\,\overline{M}_{\mathcal F,r}\,\varepsilon,
\qquad\text{uniformly in }\theta\in X_\theta.
\]

\emph{Term involving graph closeness.} The Lipschitz bound for \(\mathcal F\) in its second argument and \eqref{eq:close-final} give
\[
\|\mathcal{F}(\theta,h_\eps(\theta))-\mathcal{F}(\theta,h_0(\theta))\|_{X_\theta}
\le M_\Gamma\|h_\eps(\theta)-h_0(\theta)\|_{X_a}
\le M_\Gamma C_1(r)\varepsilon,
\]
together with \(\|\DD h_0(\theta)\|_{\mathcal{B}(X_\theta,X_a)}\le 2\Mdh\), yielding
\[
\bigl\|\DD h_0(\theta)\bigl(\mathcal{F}(\theta,h_0(\theta))-\mathcal{F}(\theta,h_\eps(\theta))\bigr)\bigr\|_{X_a}
\le 2\Mdh M_\Gamma C_1(r)\,\varepsilon,
\qquad\text{uniformly in }\theta\in X_\theta.
\]

Combining the two estimates in \eqref{eq:sec5-r-split} gives
\[
\|r_\eps(\theta)\|_{X_a}
\le
\bigl(C_{D,r}\overline{M}_{\mathcal F,r}+2\Mdh M_\Gamma C_1(r)\bigr)\varepsilon^2,
\qquad\text{uniformly in }\theta\in X_\theta.
\]

This proves the claimed \(O(\varepsilon^2)\) bound.

For the explicit formula, recall from \eqref{eq:Dh0} that
\[
\big(\DD h_0(\theta)\,\varphi\big)(x,y)
=
\partial_1 H\!\big(\theta(x),\theta(y)\big)\,\varphi(x)
+\partial_2 H\!\big(\theta(x),\theta(y)\big)\,\varphi(y),
\]
and on the critical manifold
\[
\mathcal{F}(\theta,h_0(\theta))(x)
=
\omega(x)+\int_0^1 H\!\big(\theta(x),\theta(z)\big)\,\Gamma\!\big(\theta(z)-\theta(x)\big)\,\dd z.
\]
Substituting \(\varphi=\mathcal{F}(\theta,h_0(\theta))\) and applying the minus sign gives \eqref{eq:sec5-h1-explicit}.
\end{proof}

\begin{remark}[Comparison with the finite-$N$ correction]
Formula~\eqref{eq:sec5-h1-explicit} is the direct continuum analogue of the finite-dimensional first-order correction from the preceding paper~\cite{KuehnMurphy}. In the finite-$N$ setting one obtains
\[
h_{1,ij}(\theta)
=
-\partial_{\theta_i}H(\theta_i,\theta_j)\,V_i(\theta)
-\partial_{\theta_j}H(\theta_i,\theta_j)\,V_j(\theta),
\]
where
\[
V_i(\theta)=\omega_i+\frac{1}{N}\sum_{k=1}^N H(\theta_i,\theta_k)\Gamma(\theta_k-\theta_i)
\]
is the reduced phase velocity evaluated along the critical manifold.
The continuum version replaces the discrete index and finite sum by the continuous label and Lebesgue integral, respectively.
\end{remark}

\subsection[Step-field embedding and uniform-in-N control]{Step-field embedding and uniform-in-\(N\) control}
\label{sec:sec5-stepfield}

Proposition~\ref{prop:sec5-h1} bounds the continuum remainder \(r_\varepsilon\) at the level of the Banach-space graph \(h_\varepsilon\). The purpose of this subsection is to transfer that bound to the finite-\(N\) setting by defining the finite-\(N\) Lyapunov--Perron representative used here through the same restricted construction and identifying that representative with the restriction of \(h_\varepsilon\) to step fields. The resulting Corollary~\ref{cor:sec5-uniform-order} provides the uniform-in-\(N\) \(O(\varepsilon^2)\) estimate anticipated in Remark~\ref{rem:sec3-order}; this estimate is the input needed for the reduce-first vector-field comparison in Section~\ref{sec:identification}.

Fix \(N\in\mathbb N\) and assume that the natural-frequency profile in~\eqref{eq:sec2-continuum} is a step field, \(\omega=\omega^N\in X_\theta\), constant on each \(I_i^N\). Write
\[
\mathfrak{S}_N^\theta
:=
\{\theta\in X_\theta:\theta\text{ is constant on each }I_i^N\},
\qquad
\mathfrak{S}_N^a
:=
\{a\in X_a:a\text{ is constant on each }I_i^N\times I_j^N\},
\]
and \(\mathfrak{S}_N:=\mathfrak{S}_N^\theta\times \mathfrak{S}_N^a\). The step-coefficient map identifies \(\mathfrak{S}_N^\theta\) with \(\R^N\) and \(\mathfrak{S}_N^a\) with \(\R^{N\times N}\); under this identification the \(L^\infty\)-norm restricts to the \(\ell^\infty\)-norm of the coefficients.
For \(\vartheta=(\vartheta_1,\dots,\vartheta_N)\in\R^N\), write
\[
[\vartheta]:=([\vartheta_1],\dots,[\vartheta_N])\in\mathbb{T}^N
\]
for its componentwise torus class, and let \(\theta^{N,\vartheta}\in \mathfrak{S}_N^\theta\) denote the
associated step field, \(\theta^{N,\vartheta}(x)=\vartheta_i\) on \(I_i^N\).

By Lemma~\ref{lem:sec2-stepfield-preservation}, the unreduced continuum flow preserves
these step-field subspaces and restricts to the finite-\(N\) unreduced ODE. When the
frequency profile is the step field \(\omega^N\), write \(h_{\varepsilon,\omega^N}\) for the
continuum Lyapunov--Perron graph. We write \(h_{\varepsilon,\omega^N}^N\) for the graph obtained
by applying the same cut-off Lyapunov--Perron contraction to the restricted finite-dimensional
ODE on \(\R^N\times\R^{N\times N}\). This is the finite-\(N\) graph used in the uniform remainder
transfer below; under the restricted Lyapunov--Perron construction used here, it is the
representative denoted \(h_\varepsilon^N\) in the finite-dimensional notation of
\cite{KuehnMurphy}. We only record the identification of the continuum and finite-\(N\) graphs
below.

{
This is a selection statement, not a generic uniqueness statement for slow manifolds. The object
\(h_{\varepsilon,\omega^N}^N\) denotes the coefficient graph of the fixed point selected by the
same cut-off Lyapunov--Perron problem as \(h_{\varepsilon,\omega^N}\), with the cut-off level
\(r\) and weight \(\eta_r\) fixed below, and with the same time-zero anchor condition, restricted to the finite-dimensional
step-field subspace. Thus the comparison below identifies two representatives selected by one
fixed-point construction; it does not assert uniqueness among all nearby invariant slow manifolds.
}

\begin{lemma}[Identification of the continuum and finite-\(N\) Lyapunov--Perron graphs on step fields]
\label{lem:sec5-stepfield-identify}
Fix a cut-off level \(r>M_H\) and \(\varepsilon\in(0,\varepsilon_{0,r}]\). Let
\(h_{\varepsilon,\omega^N}:X_\theta\to X_a\) be the continuum graph constructed in
Theorem~\ref{thm:sec4-slow-manifold} with frequency profile \(\omega^N\), and let
\(h_{\varepsilon,\omega^N}^N:\mathbb T^N\to\R^{N\times N}\) be the
finite-\(N\) Lyapunov--Perron-selected coefficient graph obtained by restricting
the same weighted cut-off fixed-point problem to step fields. Then
\(h_{\varepsilon,\omega^N}(\theta^{N,\vartheta})\in \mathfrak{S}_N^a\) for every \(\vartheta\in\R^N\), and
the coefficients of \(h_{\varepsilon,\omega^N}(\theta^{N,\vartheta})\) are exactly
\(h_{\varepsilon,\omega^N}^N([\vartheta])\). In particular, this identification is independent of
the chosen lift \(\vartheta\).
\end{lemma}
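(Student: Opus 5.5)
The argument is to show that the Lyapunov--Perron operator \(\mathcal L_{\eps,\theta_0,r}\) (built with \(\omega=\omega^N\)) maps the closed subspace of step-valued trajectories into itself whenever the anchor is a step field. Uniqueness of the fixed point in \(\mathcal X_{\eta_r}\) then forces the continuum fixed point to be step-valued and to coincide with the restricted finite-dimensional one. Concretely, let
\[
\mathcal X_{\eta_r}^N:=\{(\xi,\theta)\in\mathcal X_{\eta_r}:\ \xi(t)\in\mathfrak S_N^a,\ \theta(t)\in\mathfrak S_N^\theta\ \text{for all }t\le0\}.
\]
Because \(\mathfrak S_N^a\) and \(\mathfrak S_N^\theta\) are finite-dimensional, hence closed, subspaces of \(X_a\) and \(X_\theta\), the set \(\mathcal X_{\eta_r}^N\) is closed in \(\mathcal X_{\eta_r}\). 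Under the step-coefficient identification, which is an isometry onto \(\ell^\infty\)-normed coefficient spaces, \(\mathcal X_{\eta_r}^N\) is exactly the weighted trajectory space for the restricted ODE on \(\R^{N\times N}\times\R^N\).

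\emph{Step 1: the operator preserves step fields.} I would verify the pointwise structural facts. For \(\theta\in\mathfrak S_N^\theta\), \(h_0(\theta)\in\mathfrak S_N^a\). For \(\varphi\in\mathfrak S_N^\theta\), formula \eqref{eq:Dh0} gives \(\DD h_0(\theta)\varphi\in\mathfrak S_N^a\). For \(a\in\mathfrak S_N^a\), \(\mathsf N_r a\in\mathfrak S_N^a\), since it is applied pointwise. As in Lemma~\ref{lem:sec2-stepfield-preservation}, the cellwise integration shows \(\mathcal F_{r,\omega^N}(\theta,a)\in\mathfrak S_N^\theta\), and it equals the \(\frac1N\)-sum finite-\(N\) field with \(a_{ij}\) replaced by \(\sigma_r(a_{ij})\). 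Consequently \(B_r\) and \(A_r=\DD h_0 B_r\) map step pairs to step fields. Their restrictions are precisely the rectified cut-off fields of the finite-\(N\) ODE, namely \(\xi_{ij}=a_{ij}-H(\vartheta_i,\vartheta_j)\) together with the finite Jacobian of \(h_0^N\). The Bochner integrals defining \(\mathcal L^{(1)}\) and \(\mathcal L^{(2)}\) of a step-valued continuous integrand stay in the closed subspace \(\mathfrak S_N\), and they agree with the coefficientwise integrals. Hence \(\mathcal L_{\eps,\theta^{N,\vartheta},r}\) maps \(\mathcal X_{\eta_r}^N\) into itself, and under the coefficient identification it is literally the finite-\(N\) Lyapunov--Perron operator.

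\emph{Step 2: uniqueness transfers.} The contraction estimates of Lemma~\ref{lem:lyapunov_perron} hold on all of \(\mathcal X_{\eta_r}\), so they hold on the closed invariant subset \(\mathcal X_{\eta_r}^N\). Banach's theorem therefore gives a fixed point in \(\mathcal X_{\eta_r}^N\), and by definition this fixed point is the one defining \(h^N_{\eps,\omega^N}\). By uniqueness in \(\mathcal X_{\eta_r}\), it coincides with the continuum fixed point \((\xi^{\theta_0}_{\eps,r},\theta^{\theta_0}_{\eps,r})\) for \(\theta_0=\theta^{N,\vartheta}\). Evaluating \(h_0(\theta_0)+\xi(0)\) at \(t=0\) yields both \(h_{\eps,\omega^N}(\theta^{N,\vartheta})\in\mathfrak S_N^a\) and the coefficient identity with \(h^N_{\eps,\omega^N}\). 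The admissible constants \(\eta_r\) and \(\eps_{0,r}\) are the same, since the restricted problem uses the same constants \(C_1(r),\dots,C_4(r)\). These constants only get better under the \(\ell^\infty\) identification, so no new smallness condition arises.

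\emph{Step 3: lift independence.} Replacing \(\vartheta\) by \(\vartheta+2\pi k\) with \(k\in\mathbb Z^N\) leaves \(h_0\), \(\DD h_0\) and \(\mathcal F_r\) unchanged by periodicity. It therefore suffices to show \(h_\eps(\theta+2\pi k)=h_\eps(\theta)\) for step shifts. If \((\xi,\theta)\) is the fixed point at anchor \(\theta_0\), then \((\xi,\theta+2\pi k)\) solves the fixed-point equations at anchor \(\theta_0+2\pi k\). It also remains in \(\mathcal X_{\eta_r}\), because the added constant is bounded and the weight \(e^{-\eta_r t}\le 1\) for \(t\le 0\). Uniqueness then gives \(\xi\) unchanged, so the value at \(t=0\) is lift independent and descends to \([\vartheta]\in\mathbb T^N\).

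I expect the main obstacle to be the bookkeeping in Step 1 ensuring that "the restricted finite-dimensional Lyapunov--Perron problem" is literally the same operator, including the cut-off and the rectified coordinates. This requires checking that the norms and weights match under the isometric coefficient map, so that the selected finite-\(N\) representative is well defined with the same \(\eta_r\). The remaining steps are routine consequences of closedness and uniqueness.
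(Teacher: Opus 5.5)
Your proposal is correct and follows essentially the same route as the paper: the closed step-valued subspace \(C_{\eta_r}(\mathfrak{S}_N^a)\times C_{\eta_r}(\mathfrak{S}_N^\theta)\) is invariant under \(\mathcal L_{\eps,\theta_0,r}\) for step anchors, the contraction restricts to it, uniqueness identifies the ambient and restricted fixed points, and evaluation at \(t=0\) gives the coefficient identity, with lift independence coming from periodicity of \(\Gamma\) and \(H\). Your Step 3 is slightly more explicit than the paper's. The paper only observes that the two lifts induce the same restricted vector field, whereas you verify directly that \((\xi,\theta+2\pi k)\) is the fixed point for the shifted anchor and still lies in \(\mathcal X_{\eta_r}\), because \(e^{-\eta_r t}\le 1\) for \(t\le 0\).
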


\begin{proof}
The critical-manifold graph \(h_0\), the Nemytskii cut-off \(\mathsf N_r\), and the operators \(\mathcal F_{\omega^N}\) and \(\mathcal G\) all preserve \(\mathfrak{S}_N\), by Lemma~\ref{lem:sec2-stepfield-preservation} and pointwise inspection of their definitions. The Lyapunov--Perron operator \(\mathcal L_{\varepsilon,\theta_0,r}\) of Lemma~\ref{lem:lyapunov_perron} therefore maps the closed subspace \(C_{\eta_r}(\mathfrak{S}_N^a)\times C_{\eta_r}(\mathfrak{S}_N^\theta)\) into itself whenever \(\theta_0\in \mathfrak{S}_N^\theta\); its contractivity on the ambient weighted product space \(\mathcal X_{\eta_r}\) localises to this subspace, so the fixed point \((\xi^{\theta_0},\theta^{\theta_0})\) lies there. Evaluating at \(t=0\) and using \(h_{\varepsilon,\omega^N}(\theta_0)=h_0(\theta_0)+\xi^{\theta_0}(0)\) yields \(h_{\varepsilon,\omega^N}(\theta_0)\in \mathfrak{S}_N^a\).

If \(\widetilde\vartheta=\vartheta+2\pi m\) with \(m\in\mathbb{Z}^N\), then the associated step
fields \(\theta^{N,\vartheta}\) and \(\theta^{N,\widetilde\vartheta}\) induce the same restricted
finite-dimensional vector field because
\(\Gamma(\widetilde\vartheta_j-\widetilde\vartheta_i)=\Gamma(\vartheta_j-\vartheta_i)\) and
\(H(\widetilde\vartheta_i,\widetilde\vartheta_j)=H(\vartheta_i,\vartheta_j)\). Thus the restricted
fixed point depends only on the torus class \([\vartheta]\).

Read as a Lyapunov--Perron problem on \(\R^N\times\R^{N\times N}\) for the finite-\(N\) unreduced
ODE (Lemma~\ref{lem:sec2-stepfield-preservation}), the restricted fixed point is an exact
slow-manifold graph for that ODE, lying in the cut-off ball on which the cut-off is inactive
(Lemma~\ref{lem:sec4-cutoff-inactive}).
The identification now uses only uniqueness of this selected Lyapunov--Perron fixed point, not
generic uniqueness of slow manifolds. We do not claim that slow manifolds near \(S_0^N\) are unique
as invariant manifolds. The slow directions are parametrised by the anchor \(\theta_0\); once the
anchor, the cut-off level \(r\), the weight \(\eta_r\), and the Lyapunov--Perron boundary condition
on \((-\infty,0]\) are fixed, Lemma~\ref{lem:lyapunov_perron} gives a unique fixed point in
\(\mathcal X_{\eta_r}\). Concretely, the same Lyapunov--Perron operator
\(\mathcal L_{\varepsilon,\theta_0,r}\) used in Lemma~\ref{lem:lyapunov_perron} restricts, when
\(\theta_0\in \mathfrak{S}_N^\theta\), to a contraction on the closed subspace
\[
C_{\eta_r}(\mathfrak{S}_N^a)\times C_{\eta_r}(\mathfrak{S}_N^\theta)\simeq C_{\eta_r}(\R^{N\times N})\times C_{\eta_r}(\R^N),
\]
and produces a unique fixed point there. Since the closed step-field subspace is invariant under
the same operator, this ambient fixed point with step-field anchor coincides with the fixed point
of the restricted finite-dimensional contraction. By definition, the coefficient graph of
this unique fixed point is \(h_{\varepsilon,\omega^N}^N\). Since
\(h_{\varepsilon,\omega^N}(\theta^{N,\vartheta})\) is obtained by the same fixed point, restricted
to the same finite-dimensional subspace, its coefficients are exactly
\(h_{\varepsilon,\omega^N}^N([\vartheta])\). Passage from \(\vartheta\in\R^N\) to its torus class
\([\vartheta]\in\mathbb{T}^N\) uses the periodicity argument established just above.
\end{proof}

\begin{corollary}[Uniform \(O(\varepsilon^2)\) graph remainder]
\label{cor:sec5-uniform-order}
\leavevmode\par
For the Lyapunov--Perron-selected finite-\(N\) representative
\(h_{\varepsilon,\omega^N}^N\) identified in Lemma~\ref{lem:sec5-stepfield-identify}, after
choosing
\(\varepsilon_{0,r}\) uniformly for the step-frequency bound
\(M_\omega^{\mathrm{step}}\), there exists a constant \(C(r)>0\), depending only on
\(M_\Gamma,L_\Gamma,M_H,\Mdh,\MdTwoH,r\), and the uniform frequency bound
\(M_\omega^{\mathrm{step}}\) from Remark~\ref{rem:sec3-step-frequency-bound}, such that
\[
\bigl\|r_\varepsilon^N([\vartheta])\bigr\|_{\ell^\infty}
\le
C(r)\,\varepsilon^2
\qquad
\text{for all }\varepsilon\in(0,\varepsilon_{0,r}],\ N\in\mathbb N\text{ and }\vartheta\in\mathbb R^N.
\]
\end{corollary}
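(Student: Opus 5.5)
The plan is to transfer the continuum remainder bound of Proposition~\ref{prop:sec5-h1} to step fields via Lemma~\ref{lem:sec5-stepfield-identify}, and to track constants so that nothing depends on \(N\). Concretely, I will define the finite-\(N\) remainder by \(r_\varepsilon^N:=h_{\varepsilon,\omega^N}^N-h_0^N-\varepsilon h_1^N\), where \(h_0^N([\vartheta])_{ij}=H(\vartheta_i,\vartheta_j)\) and \(h_1^N\) is the finite-dimensional first-order correction from the remark following Proposition~\ref{prop:sec5-h1}. For a lift \(\vartheta\in\R^N\), I will show that each of the three pieces is the coefficient array of the corresponding continuum object evaluated at the step field \(\theta^{N,\vartheta}\), with frequency profile \(\omega^N\). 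For \(h_0\) this is immediate from the pointwise definition. For \(h_1\), the explicit formula \eqref{eq:sec5-h1-explicit} evaluated on step fields \(\theta^{N,\vartheta}\) and \(\omega^N\) gives, on \(I_i^N\times I_j^N\), exactly \(-\partial_1H(\vartheta_i,\vartheta_j)V_i-\partial_2H(\vartheta_i,\vartheta_j)V_j\), because the \(z\)-integrals collapse to \(\tfrac1N\sum_k\) as in Lemma~\ref{lem:sec2-stepfield-preservation}. For \(h_\varepsilon\), Lemma~\ref{lem:sec5-stepfield-identify} supplies the identification. Hence \(r_{\varepsilon,\omega^N}(\theta^{N,\vartheta})\in\mathfrak S_N^a\), and its coefficients are \(r_\varepsilon^N([\vartheta])\). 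Since the \(L^\infty(\I^2)\)-norm restricts to the \(\ell^\infty\)-norm of coefficients, we get \(\|r_\varepsilon^N([\vartheta])\|_{\ell^\infty}=\|r_{\varepsilon,\omega^N}(\theta^{N,\vartheta})\|_{X_a}\).

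Next I will apply Proposition~\ref{prop:sec5-h1} with \(\omega=\omega^N\). This gives the bound \(\bigl(C_{D,r}\overline M_{\mathcal F,r}+2\Mdh M_\Gamma C_1(r)\bigr)\varepsilon^2\), uniformly over \(\theta\in X_\theta\) and in particular over all step fields.

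The remaining work is to check that this constant and the admissible range \(\varepsilon_{0,r}\) do not depend on \(N\). I expect this bookkeeping to be the main obstacle. The frequency profile enters only through \(M_\omega=\|\omega^N\|_{L^\infty}\), which appears in \(C_2(r)\), \(C_1(r)=2\Mdh C_2(r)\), \(C_3(r)\), and \(\overline M_{\mathcal F,r}\). All of these are nondecreasing in \(M_\omega\). I will therefore replace \(M_\omega\) by \(M_\omega^{\mathrm{step}}\) from Remark~\ref{rem:sec3-step-frequency-bound} throughout the estimates. \(C_4(r)\) is independent of \(\omega\), so \(\eta_r\) is independent of \(N\). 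The constraints on \(\varepsilon_{0,r}\) are: contraction \(q_*<1\) in \eqref{eq:sec4-qstar}, the cut-off smallness \eqref{eq:sec4-cutoff-smallness}, \(1+\varepsilon\eta_r\ge\tfrac12\), the derivative-closeness shrinking in Theorem~\ref{thm:sec4-slow-manifold}(iii), and \(\mu_r>0\). All of these are monotone conditions in the constants, so they can be imposed once with the step bound. Then \(C_{D,r}=2C_3(r)/(1-q_*)\) is uniform, and I will need to verify from the appendix lemmas that no other \(\omega\)-dependence enters. The fixed point for \(\omega^N\) is the same object regardless of which admissible bound was used to select \(\varepsilon_{0,r}\), by uniqueness, so shrinking the range does not change \(h_{\varepsilon,\omega^N}\).

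Finally, independence of the lift follows from the periodicity part of Lemma~\ref{lem:sec5-stepfield-identify}, together with the \(2\pi\)-periodicity of \(H\), \(\partial_rH\) and \(\Gamma\) in the formulas for \(h_0^N\) and \(h_1^N\). This makes the bound a statement on \(\mathbb T^N\), uniform in \(N\) and \(\vartheta\), with \(C(r)\) depending only on the listed quantities.
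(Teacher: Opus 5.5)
Your proposal is correct and follows essentially the same route as the paper. You identify the step-field coefficients of the continuum remainder \(r_\varepsilon(\theta^{N,\vartheta})\) (with frequency \(\omega^N\)) with \(r_\varepsilon^N([\vartheta])\), using Lemma~\ref{lem:sec5-stepfield-identify} and the step-preserving formulas for \(h_0\) and \(h_1\), and then invoke Proposition~\ref{prop:sec5-h1} with \(M_\omega\) replaced by \(M_\omega^{\mathrm{step}}\). Your explicit bookkeeping goes further than the paper's bare assertion that the tracked constant is independent of \(N\): you note that \(\omega\) enters only through \(C_1,C_2,C_3,\overline M_{\mathcal F,r}\), that \(\eta_r\) depends only on \(C_4(r)\), and that shrinking \(\varepsilon_{0,r}\) does not alter the selected fixed point.
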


\begin{proof}
Let \(\vartheta\in\R^N\) and let \(\theta^{N,\vartheta}\in \mathfrak{S}_N^\theta\) be the associated step
field. Define the continuum remainder at the step-field anchor by
\[
r_\varepsilon(\theta^{N,\vartheta})
:=
h_{\varepsilon,\omega^N}(\theta^{N,\vartheta})-h_0(\theta^{N,\vartheta})-\varepsilon h_1(\theta^{N,\vartheta}).
\]
The algebraic formulas \(h_0(\theta)(x,y)=H(\theta(x),\theta(y))\) and~\eqref{eq:sec5-h1-explicit}
send step-field inputs to step-field outputs, and Lemma~\ref{lem:sec5-stepfield-identify} does the
same for \(h_{\varepsilon,\omega^N}\); hence \(r_\varepsilon(\theta^{N,\vartheta})\in \mathfrak{S}_N^a\), and its
step-coefficients coincide with \(r_\varepsilon^N([\vartheta])\). Since
\(\omega=\omega^N\) is a step field with
\(\|\omega^N\|_{\Linfty(\I)}\le M_\omega^{\mathrm{step}}\),
Proposition~\ref{prop:sec5-h1} therefore gives a constant independent of \(N\):
\[
\|r_\varepsilon^N([\vartheta])\|_{\ell^\infty}
=
\|r_\varepsilon(\theta^{N,\vartheta})\|_{X_a}
\le C(r)\,\varepsilon^2,
\]
with \(C(r)\) the constant tracked in the proof of Proposition~\ref{prop:sec5-h1}, depending on
the continuum data listed above and on \(M_\omega^{\mathrm{step}}\), but not on \(N\).
\end{proof}

The transfer works because the sup-norm operator bounds for the finite-\(N\) vector field coincide with their \(L^\infty\)-continuum counterparts: the \(1/N\) and \(1/N^2\) prefactors in the coupling sums exactly cancel the \(N\) and \(N^2\) summands, giving for instance \(\|\partial_a f^N\|_{\ell^\infty\to\ell^\infty}\le M_\Gamma\) and \(\|\DD h_0^N\|_{\ell^\infty\to\ell^\infty}\le 2\Mdh\), identical to the continuum estimates of Section~\ref{sec:infdim-reduction}. The Lyapunov--Perron contraction constant is consequently the same whether read in \(L^\infty(\I^2)\) or on the step-field subspace.

\subsection{Reduced vector field and first-order compatibility}

We now substitute the first-order expansion
\(h_\eps=h_0+\eps h_1+r_\varepsilon\) into the slow equation
\eqref{eq:slow-dynamics} and identify the resulting operators.

\begin{theorem}[Identification of the reduced dynamics]\label{thm:sec5-identification}
Under Assumption~\ref{ass:section4}, for every $\eps\in(0,\eps_{0,r}]$, the dynamics on $S_{\eps}$ satisfy
\begin{equation}\label{eq:sec5-reduced}
\partial_t\theta
=
\omega + K[\theta] + \eps\,P[\theta;\omega] + \eps\,T[\theta]
+R_\varepsilon^{\mathrm{CF}}[\theta],
\end{equation}
where $K$, $P$, and $T$ are the operators defined in \eqref{eq:sec3-Kop}--\eqref{eq:sec3-Top}. The remainder satisfies
\[
\sup_{\theta\in X_\theta}
\|R_\varepsilon^{\mathrm{CF}}[\theta]\|_{X_\theta}
\le C_{\mathrm{CF}}(r)\varepsilon^2 .
\]
\end{theorem}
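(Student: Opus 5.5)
The plan is to substitute the graph expansion of Proposition~\ref{prop:sec5-h1} into the slow equation~\eqref{eq:slow-dynamics} and exploit the fact that \(\mathcal F(\theta,\cdot)\) is affine in its second argument. Writing \(h_\eps=h_0+\eps h_1+r_\eps\), I obtain
\[
\mathcal F\bigl(\theta,h_\eps(\theta)\bigr)
=\mathcal F\bigl(\theta,h_0(\theta)\bigr)+\eps\,\mathcal J_\theta[h_1(\theta)]+\mathcal J_\theta[r_\eps(\theta)],
\qquad
\mathcal J_\theta[b](x):=\int_0^1 b(x,y)\,\Gamma\bigl(\theta(y)-\theta(x)\bigr)\dd y .
\]
This is an exact identity; no Taylor remainder arises. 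I will then set \(R_\eps^{\mathrm{CF}}[\theta]:=\mathcal J_\theta[r_\eps(\theta)]\). Since \(\|\mathcal J_\theta\|_{\mathcal B(X_a,X_\theta)}\le M_\Gamma\) for every \(\theta\), Proposition~\ref{prop:sec5-h1} gives \(\|R_\eps^{\mathrm{CF}}[\theta]\|_{X_\theta}\le M_\Gamma C_r\eps^2\) uniformly in \(\theta\). This yields the claimed bound with \(C_{\mathrm{CF}}(r):=M_\Gamma\bigl(C_{D,r}\overline M_{\mathcal F,r}+2\Mdh M_\Gamma C_1(r)\bigr)\).

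The remaining work is algebraic identification. First, \(\mathcal F(\theta,h_0(\theta))=\omega+K[\theta]\) holds directly by comparing~\eqref{eq:sec2-Fop} with~\eqref{eq:sec3-Kop}. Second, I will insert the explicit formula~\eqref{eq:sec5-h1-explicit} into \(\mathcal J_\theta\). I split \(h_1\) into its \(\omega\)-dependent part and its \(\int\!H\Gamma\,\dd z\) part. The \(\omega\)-dependent part, namely \(-\partial_1H\,\omega(x)-\partial_2H\,\omega(y)\), integrated against \(\Gamma(\theta(y)-\theta(x))\) in \(y\), reproduces \(P[\theta;\omega]\) from~\eqref{eq:sec3-Pop} term by term. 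The other part contributes
\[
-\int_0^1\Gamma\bigl(\theta(y)-\theta(x)\bigr)\Bigl[\partial_1H\bigl(\theta(x),\theta(y)\bigr)\!\int_0^1\! H\bigl(\theta(x),\theta(z)\bigr)\Gamma\bigl(\theta(z)-\theta(x)\bigr)\dd z+\partial_2H\bigl(\theta(x),\theta(y)\bigr)\!\int_0^1\! H\bigl(\theta(y),\theta(z)\bigr)\Gamma\bigl(\theta(z)-\theta(y)\bigr)\dd z\Bigr]\dd y .
\]
By Fubini this is exactly the double integral \(T[\theta]\) of~\eqref{eq:sec3-Top}. Fubini applies because all integrands are bounded and jointly measurable on \(\I^3\), as compositions of Borel functions with measurable \(\theta\).

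The step I expect to require the most care is not the computation itself but justifying the setting in which it is carried out. First, I must check that the cut-off is inactive, so that the slow flow on \(S_\eps\) is governed by \(\mathcal F\) rather than \(\mathcal F_r\); Lemma~\ref{lem:sec4-cutoff-inactive} handles this. Second, the \(\eps^2\) constant must be uniform over all of \(X_\theta\), with no restriction to bounded sets. I will get this uniformity from the fact that every constant entering \(C_r\) depends only on \(M_\omega,M_\Gamma,L_\Gamma,M_H,\Mdh,\MdTwoH\) and \(r\), and not on \(\|\theta\|_{X_\theta}\). Third, the a.e.\ identities on \(\I^2\) must be compatible with the \(L^\infty\) equivalence classes; choosing measurable representatives of \(\theta\) and \(\omega\) will handle this.
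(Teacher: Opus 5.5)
Your proposal is correct and follows the paper's proof essentially step for step. Like the paper, you use Lemma~\ref{lem:sec4-cutoff-inactive} to replace $\mathcal F_r$ by $\mathcal F$, substitute $h_\varepsilon=h_0+\varepsilon h_1+r_\varepsilon$ into the affine slow field, read off $\omega+K[\theta]$ from the $h_0$ part, split $h_1$ into its $\omega$-dependent part (giving $P$) and its integral part (giving $T$ via Fubini), and bound $R_\varepsilon^{\mathrm{CF}}[\theta](x)=\int_0^1 r_\varepsilon[\theta](x,y)\,\Gamma(\theta(y)-\theta(x))\,\dd y$ by $M_\Gamma C_r\varepsilon^2$ uniformly in $\theta$ using Proposition~\ref{prop:sec5-h1}.
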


\begin{proof}
{ By Lemma~\ref{lem:sec4-cutoff-inactive} the cut-off is inactive on the slow manifold, so \(\mathcal{F}(\theta,h_\eps(\theta))=\mathcal{F}_r(\theta,h_\eps(\theta))\).}
The reduced dynamics on the slow manifold are
\[
\partial_t\theta(x)
=
\mathcal{F}(\theta,h_\eps(\theta))(x)
=
\omega(x) + \int_0^1 h_\eps[\theta](x,y)\,\Gamma\!\big(\theta(y)-\theta(x)\big)\,\dd y.
\]
Substituting \(h_\eps=h_0+\eps h_1+r_\varepsilon\) gives
\begin{align*}
\partial_t\theta(x)
&=
\omega(x)
+\int_0^1 H\!\big(\theta(x),\theta(y)\big)\,\Gamma\!\big(\theta(y)-\theta(x)\big)\,\dd y \\
&\quad
+\eps\int_0^1 h_1[\theta](x,y)\,\Gamma\!\big(\theta(y)-\theta(x)\big)\,\dd y
+
R_\varepsilon^{\mathrm{CF}}[\theta](x),
\end{align*}
where
\[
R_\varepsilon^{\mathrm{CF}}[\theta](x):=
\int_0^1 r_\varepsilon[\theta](x,y)\,
\Gamma\!\big(\theta(y)-\theta(x)\big)\,\dd y .
\]
The first line is $\omega(x)+K[\theta](x)$.

It remains to show that $\int_0^1 h_1[\theta](x,y)\,\Gamma(\theta(y)-\theta(x))\,\dd y = P[\theta;\omega](x)+T[\theta](x)$.

Substituting the explicit formula \eqref{eq:sec5-h1-explicit} for $h_1$ yields
\begin{align*}
&\int_0^1 h_1[\theta](x,y)\,\Gamma\!\big(\theta(y)-\theta(x)\big)\,\dd y \\
&=
-\int_0^1 \Gamma\!\big(\theta(y)-\theta(x)\big)\,\partial_1 H\!\big(\theta(x),\theta(y)\big)
\bigg[\omega(x)+\int_0^1 H\!\big(\theta(x),\theta(z)\big)\,\Gamma\!\big(\theta(z)-\theta(x)\big)\,\dd z\bigg]\,\dd y\\
&\quad
-\int_0^1 \Gamma\!\big(\theta(y)-\theta(x)\big)\,\partial_2 H\!\big(\theta(x),\theta(y)\big)
\bigg[\omega(y)+\int_0^1 H\!\big(\theta(y),\theta(z)\big)\,\Gamma\!\big(\theta(z)-\theta(y)\big)\,\dd z\bigg]\,\dd y.
\end{align*}
We separate the $\omega$-dependent and the integral terms.

\emph{$\omega$-dependent terms.}
Collecting the terms involving $\omega$ gives
\begin{align*}
&-\int_0^1 \Gamma\!\big(\theta(y)-\theta(x)\big)
\Big(\partial_1 H\!\big(\theta(x),\theta(y)\big)\,\omega(x)
+\partial_2 H\!\big(\theta(x),\theta(y)\big)\,\omega(y)\Big)\,\dd y
=
P[\theta;\omega](x),
\end{align*}
by the definition \eqref{eq:sec3-Pop}.

\emph{Integral terms.}
The remaining double integrals are
\begin{align*}
&-\int_0^1 \Gamma\!\big(\theta(y)-\theta(x)\big)\,\partial_1 H\!\big(\theta(x),\theta(y)\big)
\int_0^1 H\!\big(\theta(x),\theta(z)\big)\,\Gamma\!\big(\theta(z)-\theta(x)\big)\,\dd z\,\dd y\\
&-\int_0^1 \Gamma\!\big(\theta(y)-\theta(x)\big)\,\partial_2 H\!\big(\theta(x),\theta(y)\big)
\int_0^1 H\!\big(\theta(y),\theta(z)\big)\,\Gamma\!\big(\theta(z)-\theta(y)\big)\,\dd z\,\dd y.
\end{align*}
Since $\Gamma$, $H$, $\partial_1 H$, $\partial_2 H$ are bounded and measurable, each integrand is in $L^\infty(\I^2)$. By Fubini's theorem we may write each term as a double integral over $(y,z)\in\I^2$:
\begin{align*}
&-\int_0^1\!\int_0^1
\Gamma\!\big(\theta(y)-\theta(x)\big)\,\partial_1 H\!\big(\theta(x),\theta(y)\big)\,
H\!\big(\theta(x),\theta(z)\big)\,\Gamma\!\big(\theta(z)-\theta(x)\big)\,\dd z\,\dd y\\
&-\int_0^1\!\int_0^1
\Gamma\!\big(\theta(y)-\theta(x)\big)\,\partial_2 H\!\big(\theta(x),\theta(y)\big)\,
H\!\big(\theta(y),\theta(z)\big)\,\Gamma\!\big(\theta(z)-\theta(y)\big)\,\dd z\,\dd y
=
T[\theta](x),
\end{align*}
by the definition \eqref{eq:sec3-Top}. Combining gives \eqref{eq:sec5-reduced}.

The remainder in \eqref{eq:sec5-reduced} comes entirely from the manifold remainder
\(r_\varepsilon\). Since
\(\|r_\varepsilon(\theta)\|_{X_a}\le C_r\varepsilon^2\) uniformly in \(X_\theta\) and
\(\Gamma\) is bounded,
\[
\|R_\varepsilon^{\mathrm{CF}}[\theta]\|_{X_\theta}
\le M_\Gamma C_r\varepsilon^2,
\]
which gives the stated bound.
\end{proof}

We can now state the main structural conclusion of the paper: along admissible step-field
approximations, the two routes have the same first-order continuum vector-field truncation, with
controlled \(O(\varepsilon^2)\) remainders.

\begin{theorem}[First-order compatibility along admissible step approximations]\label{thm:sec5-commutation}
Assume Assumption~\ref{ass:section4}, and let \(\{\omega^N\}_{N\ge1}\) be an admissible
equal-cell step sequence with \(\omega^N\to\omega\) in \(X_\theta\) and
\(\sup_N\|\omega^N\|_{X_\theta}<\infty\). After choosing \(\varepsilon_{0,r}>0\) uniformly for
this frequency bound, the following holds for all
\(\varepsilon\in(0,\varepsilon_{0,r}]\).
\begin{enumerate}[label=\textup{(\roman*)}]
\item \textbf{Reduce-first route on step fields.}
For every \(N\in\mathbb N\) and every step field
\(\theta=\theta^{N,\vartheta}\in \mathfrak{S}_N^\theta\), let
\(\mathcal V_\varepsilon^{N,\mathrm{RF}}[\theta]\) denote the embedded finite-\(N\)
reduced phase vector field obtained from the restricted Lyapunov--Perron graph
\(h_{\varepsilon,\omega^N}^N\) above, with frequency vector
\((\omega_1^N,\ldots,\omega_N^N)\). Then
\[
\mathcal V_\varepsilon^{N,\mathrm{RF}}[\theta]
=\omega^N+K[\theta]+\varepsilon P[\theta;\omega^N]+\varepsilon T[\theta]
+R_\varepsilon^{N,\mathrm{RF}}[\theta],
\]
where, for \(x\in I_i^N\),
\[
R_\varepsilon^{N,\mathrm{RF}}[\theta](x)
:=
\frac1N\sum_{j=1}^N
r_{\varepsilon,ij}^N([\vartheta])\,
\Gamma(\vartheta_j-\vartheta_i).
\]
The graph-remainder estimate of Corollary~\ref{cor:sec5-uniform-order} gives
\[
\bigl\|R_\varepsilon^{N,\mathrm{RF}}[\theta]\bigr\|_{X_\theta}
\le M_\Gamma C(r)\varepsilon^2,
\]
with constant independent of \(N\) and \(\vartheta\).

\item \textbf{Continuum-first route.}
For any frequency profile \(\Omega\) in the same \(L^\infty\)-bounded class, define
\[
\mathcal V_{\varepsilon,\Omega}^{\mathrm{CF}}[\theta]
:=\mathcal F_\Omega(\theta,h_{\varepsilon,\Omega}(\theta)).
\]
Then
\[
\mathcal V_{\varepsilon,\Omega}^{\mathrm{CF}}[\theta]
=
\Omega+K[\theta]+\varepsilon P[\theta;\Omega]+\varepsilon T[\theta]
+R_{\varepsilon,\Omega}^{\mathrm{CF}}[\theta],
\]
with
\[
\sup_{\theta\in X_\theta}
\|R_{\varepsilon,\Omega}^{\mathrm{CF}}[\theta]\|_{X_\theta}
\le C_{\mathrm{CF}}(r)\varepsilon^2,
\]
where the constant is uniform over the chosen frequency class.

\item \textbf{Compatibility.}
For each \(N\), the two exact vector fields agree on \(\mathfrak{S}_N^\theta\) when the
continuum-first route is constructed with the same step frequency:
\begin{equation}\label{eq:sec5-exact-step-equality}
\mathcal V_{\varepsilon,\omega^N}^{\mathrm{CF}}[\theta^{N,\vartheta}]
=
\mathcal V_\varepsilon^{N,\mathrm{RF}}[\theta^{N,\vartheta}].
\end{equation}
Consequently both routes have the common first-order truncation
\[
\mathcal V_\varepsilon^{(1)}[\theta;\Omega]
:=\Omega+K[\theta]+\varepsilon P[\theta;\Omega]+\varepsilon T[\theta].
\]
If the continuum-first vector field is instead evaluated with the limiting frequency \(\omega\),
then for every \(N\) and \(\vartheta\),
\begin{equation}\label{eq:sec5-commutation}
\bigl\|
\mathcal V_\varepsilon^{N,\mathrm{RF}}[\theta^{N,\vartheta}]
-
\mathcal V_{\varepsilon,\omega}^{\mathrm{CF}}[\theta^{N,\vartheta}]
\bigr\|_{X_\theta}
\le
\bigl(1+2\varepsilon M_\Gamma\Mdh\bigr)\|\omega^N-\omega\|_{X_\theta}
+C(r)\varepsilon^2 .
\end{equation}
Thus, along admissible step-field approximations, the reduce-first and continuum-first routes
share the same first-order continuum vector-field truncation, with \(O(\varepsilon^2)\) remainders.
\end{enumerate}
\end{theorem}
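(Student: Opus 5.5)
The proof assembles results already established, organised around the exact step-field identity \eqref{eq:sec5-exact-step-equality}.

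\emph{Part (ii).} First we fix \(\varepsilon_{0,r}\) uniformly. The constants \(C_1(r),\dots,C_4(r)\), \(\eta_r\), \(q_*\), \(C_{D,r}\) and \(\overline M_{\mathcal F,r}\) depend on the frequency only through \(M_\omega\). We therefore replace \(M_\omega\) by \(\max\{\|\omega\|_{X_\theta},\sup_N\|\omega^N\|_{X_\theta}\}\), which bounds every member of the frequency class. With this choice, Theorem~\ref{thm:sec4-slow-manifold}, Proposition~\ref{prop:sec5-h1} and Theorem~\ref{thm:sec5-identification} apply verbatim to every \(\Omega\) in the class, with the same constants. This gives part (ii) with \(C_{\mathrm{CF}}(r)=M_\Gamma C_r\).

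\emph{Part (iii), exact equality.} By Lemma~\ref{lem:sec5-stepfield-identify}, \(h_{\varepsilon,\omega^N}(\theta^{N,\vartheta})\) is a step field whose coefficients are \(h^N_{\varepsilon,\omega^N}([\vartheta])\). Evaluating \(\mathcal F_{\omega^N}\) on this pair is computed exactly as in the proof of Lemma~\ref{lem:sec2-stepfield-preservation}: for \(x\in I_i^N\),
\[
\mathcal F_{\omega^N}\bigl(\theta^{N,\vartheta},h_{\varepsilon,\omega^N}(\theta^{N,\vartheta})\bigr)(x)
=\omega_i^N+\tfrac1N\sum_j h^N_{\varepsilon,\omega^N,ij}([\vartheta])\,\Gamma(\vartheta_j-\vartheta_i).
\]
This is by definition the embedded reduce-first field \(\mathcal V_\varepsilon^{N,\mathrm{RF}}\), which proves \eqref{eq:sec5-exact-step-equality}.

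\emph{Part (i).} Write \(h^N=h_0^N+\varepsilon h_1^N+r^N_\varepsilon\) and substitute this into the finite sum. The \(h_0^N\) and \(h_1^N\) contributions are the step-field evaluations of \(K\), \(P[\cdot;\omega^N]\) and \(T\): the cell integrals reduce to \(1/N\) and \(1/N^2\) sums, as in Lemma~\ref{lem:sec3-stepfield-preservation}. The computation is identical to that in Theorem~\ref{thm:sec5-identification}. What is left over is exactly \(R^{N,\mathrm{RF}}_\varepsilon\), and it satisfies
\[
\|R^{N,\mathrm{RF}}_\varepsilon\|\le M_\Gamma\max_{i,j}|r^N_{\varepsilon,ij}|\le M_\Gamma C(r)\varepsilon^2
\]
by Corollary~\ref{cor:sec5-uniform-order}. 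Equivalently, part (i) follows from part (ii) with \(\Omega=\omega^N\) together with the equality above. In particular, both routes share the common truncation \(\mathcal V^{(1)}_\varepsilon\).

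\emph{Estimate \eqref{eq:sec5-commutation}.} We split the difference as
\[
\mathcal V^{N,\mathrm{RF}}_\varepsilon-\mathcal V^{\mathrm{CF}}_{\varepsilon,\omega}
=\bigl(\mathcal V^{(1)}_\varepsilon[\theta;\omega^N]-\mathcal V^{(1)}_\varepsilon[\theta;\omega]\bigr)
+\bigl(R^{\mathrm{CF}}_{\varepsilon,\omega^N}-R^{\mathrm{CF}}_{\varepsilon,\omega}\bigr).
\]
The first bracket equals \((\omega^N-\omega)+\varepsilon\bigl(P[\theta;\omega^N]-P[\theta;\omega]\bigr)\), since \(K\) and \(T\) do not depend on the frequency. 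Because \(P\) is linear in \(\omega\), the bound \eqref{eq:sec3-bd-P} gives \((1+2\varepsilon M_\Gamma\Mdh)\|\omega^N-\omega\|\).

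The main obstacle is the remainder difference. Bounding each term separately gives \(2M_\Gamma C_r\varepsilon^2\), which is of the stated form once the constants are absorbed into \(C(r)\). I would do this rather than attempt a Lipschitz-in-\(\Omega\) estimate for \(r_{\varepsilon,\Omega}\), which would require differentiating the Lyapunov--Perron fixed point with respect to the frequency. The only care needed is to make sure the constant \(C(r)\) in \eqref{eq:sec5-commutation} is taken large enough to dominate \(2M_\Gamma\) times the uniform graph-remainder constant.
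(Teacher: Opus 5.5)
Your proposal is correct and follows the paper's proof essentially step for step. Part (ii) comes from Theorem~\ref{thm:sec5-identification}, with constants made uniform through the common frequency bound. The exact identity \eqref{eq:sec5-exact-step-equality} comes from Lemma~\ref{lem:sec5-stepfield-identify}, and part (i) from Corollary~\ref{cor:sec5-uniform-order}. The estimate \eqref{eq:sec5-commutation} follows by cancelling \(K\) and \(T\), using linearity of \(P\) in the frequency, and bounding the two \(O(\varepsilon^2)\) remainders separately with a renamed constant. The only cosmetic difference is that you compare \(R^{\mathrm{CF}}_{\varepsilon,\omega^N}\) with \(R^{\mathrm{CF}}_{\varepsilon,\omega}\) via the exact identity, whereas the paper uses \(R^{N,\mathrm{RF}}_\varepsilon\) directly; the two coincide on step fields.
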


\begin{proof}
For part~(i), write the finite-\(N\) graph expansion as
\[
h_\varepsilon^N=h_0^N+\varepsilon h_1^N+r_\varepsilon^N.
\]
Substituting this into the finite-\(N\) phase vector field shows that the graph remainder
contributes exactly
\[
R_{\varepsilon,i}^{N,\mathrm{RF}}([\vartheta])
=
\frac1N\sum_{j=1}^N
r_{\varepsilon,ij}^N([\vartheta])\Gamma(\vartheta_j-\vartheta_i).
\]
Therefore
\[
\|R_\varepsilon^{N,\mathrm{RF}}[\theta^{N,\vartheta}]\|_{X_\theta}
\le
M_\Gamma\|r_\varepsilon^N([\vartheta])\|_{\ell^\infty}
\le
M_\Gamma C(r)\varepsilon^2
\]
by Corollary~\ref{cor:sec5-uniform-order}. The remaining zeroth- and first-order terms are the
embedded forms of \(K\), \(P[\cdot;\omega^N]\), and \(T\), as in Section~\ref{sec:reduced-model}.

Part~(ii) is Theorem~\ref{thm:sec5-identification}, applied with the frequency profile
\(\Omega\). The constants are uniform over the stated frequency class because the estimates in
Sections~\ref{sec:infdim-reduction}--\ref{sec:identification} depend on the frequency only through
the common \(L^\infty\)-bound.

For \eqref{eq:sec5-exact-step-equality}, apply Lemma~\ref{lem:sec5-stepfield-identify} to the
continuum system with frequency profile \(\omega^N\). The continuum graph
\(h_{\varepsilon,\omega^N}(\theta^{N,\vartheta})\) then lies in \(\mathfrak{S}_N^a\) and has step
coefficients \(h_{\varepsilon,\omega^N}^N([\vartheta])\). Since
\(\mathcal F_{\omega^N}\) restricted to \(\mathfrak{S}_N^\theta\times \mathfrak{S}_N^a\) is precisely the embedded
finite-\(N\) phase vector field, the exact vector fields agree.

Finally, subtract the expansions in (i) and (ii) with \(\Omega=\omega\). The operators \(K\) and
\(T\) cancel, while
\[
\|P[\theta;\omega^N]-P[\theta;\omega]\|_{X_\theta}
\le 2M_\Gamma\Mdh\,\|\omega^N-\omega\|_{X_\theta}
\]
by the explicit formula for \(P\). The two named remainders are bounded by constants times
\(\varepsilon^2\), giving \eqref{eq:sec5-commutation} after renaming the constant.
\end{proof}

\begin{remark}[Scope of the compatibility statement]\label{rem:sec5-commutation-scope}
The reduce-first route is treated here on step fields, where the embedded finite-\(N\)
reduced vector field associated with the restricted Lyapunov--Perron graph is canonical for the
present comparison. We do not construct an
exact reduce-first continuum vector field on all of \(X_\theta\). The symbolic
compatibility statement is therefore only a statement about the common first-order vector-field
truncation, together with the explicit \(O(\varepsilon^2)\) remainders above and the admissible
\(\omega^N\to\omega\) step-field limit.
\end{remark}

\begin{remark}[Vector-field versus trajectory compatibility]\label{rem:sec5-vf-vs-trajectory}
{
Theorem~\ref{thm:sec5-commutation} is stated at the vector-field level, but this does not obstruct
the usual fixed-time trajectory comparison. Let \(\theta_{\mathrm{RF}}^N\) be the embedded
reduce-first trajectory generated by \(\mathcal V_\varepsilon^{N,\mathrm{RF}}\) from a step-field
initial datum \(\theta_0^N\), and let \(\theta_{\mathrm{CF}}\) be the continuum-first trajectory
generated by \(\mathcal V_{\varepsilon,\omega}^{\mathrm{CF}}\) from the same initial datum. Combining
\eqref{eq:sec5-commutation} with the reduced-flow Lipschitz estimate of
Proposition~\ref{prop:sec3-stability} gives, for every fixed \(T>0\),
\[
\sup_{t\in[0,T]}
\|\theta_{\mathrm{RF}}^N(t)-\theta_{\mathrm{CF}}(t)\|_{X_\theta}
\le
C_T^{\mathrm{red}}
\bigl(\|\omega^N-\omega\|_{X_\theta}+\varepsilon^2\bigr),
\]
where \(C_T^{\mathrm{red}}\) is independent of \(N\) and of
\(\varepsilon\in(0,\varepsilon_{0,r}]\). If the continuum-first route is instead constructed with
the same step frequency \(\omega^N\), then the exact vector-field identity
\eqref{eq:sec5-exact-step-equality}, together with step-field preservation and common initial
data, gives equality of the corresponding Lyapunov--Perron-selected trajectories. Thus standard
finite-time continuous-dependence estimates are available. What is not claimed here is a statement
uniform on diverging time horizons \(T=T(\varepsilon)\to\infty\), a comparison involving arbitrary
nearby slow-manifold representatives, or a comparison with an exact reduce-first continuum flow on
all of \(X_\theta\), which is not constructed in the present paper. Such stronger statements would
require additional long-time stability or higher-order matching assumptions beyond the present
scope.
}
\end{remark}

\begin{remark}[Persistence of higher-order structure]
The same first-order triplet operator $T$ defined in \eqref{eq:sec3-Top} appears in both route expansions. In the reduce-first route it arises as the continuum limit of the finite-$N$ triplet sum $N^{-2}\sum_{j,k}T_{ijk}$ from \eqref{eq:sec3-Tijk}. In the continuum-first route it emerges from the Fenichel correction $h_1$ through the substitution carried out in Theorem~\ref{thm:sec5-identification}. The agreement confirms that the triplet structure identified in the finite-dimensional setting of \cite{KuehnMurphy} persists in the macroscopic continuum description; its continuum nonpairwise character is then certified by the criterion below when the stated mixed-derivative condition holds.
\end{remark}

\subsection{Continuum nonpairwise criterion for the triplet operator}

 {To formulate a rigorous continuum analogue of the mixed-derivative test from the finite-$N$ setting in \cite{KuehnMurphy}, it is convenient to work on the closed subspace \(C(\I)\subset \Linfty(\I)\) with the supremum norm. Any pairwise representation valid on all of \(\Linfty(\I)\) restricts in particular to \(C(\I)\). Consequently, it suffices to rule out pairwise representability on \(C(\I)\).}

\begin{definition}[Continuum pairwise representability]\label{def:sec5-pairwise}
{ 
Let \(V:C(\I)\to C(\I)\) be a \(C^2\)-map. We say that \(V\) is \emph{pairwise-representable} if there exists a jointly measurable kernel
\[
\kappa:\R^2\times \I^2\to \R
\]
such that, for every \((x,y)\in \I^2\), the map \((u,v)\mapsto \kappa(u,v;x,y)\) belongs to \(C^2(\R^2)\), its partial derivatives up to order two are bounded uniformly in \((x,y)\), and
\begin{equation}\label{eq:sec5-pairwise-form}
V[\theta](x)=\int_0^1 \kappa\bigl(\theta(x),\theta(y);x,y\bigr)\,\dd y
\qquad
\text{for all }\theta\in C(\I),\ x\in \I.
\end{equation}
If no such kernel exists, then \(V\) is called \emph{genuinely nonpairwise}.}
\end{definition}

\begin{remark}[Scope of ``genuinely nonpairwise'']\label{rem:sec5-nonpairwise-scope}
Throughout this paper, the term \emph{genuinely nonpairwise} is used relative to the smooth bounded-kernel class of Definition~\ref{def:sec5-pairwise}: kernels \(\kappa\in C^2(\R^2)\) jointly measurable in \((x,y)\), with partial derivatives up to order two uniformly bounded in \((x,y)\in\I^2\). Rougher representation classes --- for example pairwise representations through distributional kernels, only-measurable kernels, or kernels with unbounded second derivatives --- are not considered here, and the continuum mixed-derivative test of Theorem~\ref{thm:sec5-triplet-criterion} below does not rule them out. The criterion thus says that the triplet operator \(T\) admits no representation by a smooth bounded pairwise kernel; it leaves open whether more singular pairwise representations might exist outside this class.
\end{remark}

\begin{proposition}[Off-diagonal second variations vanish for pairwise fields]\label{prop:sec5-pairwise-vanishing}
{ 
Let \(V:C(\I)\to C(\I)\) be pairwise-representable in the sense of Definition~\ref{def:sec5-pairwise}. Then, for every \(\theta,\phi,\psi\in C(\I)\) and every \(x\in \I\),
\begin{equation}\label{eq:sec5-pairwise-vanishing}
D^2V[\theta](\phi,\psi)(x)=0
\end{equation}
whenever
\[
\phi(x)=\psi(x)=0,
\qquad
\operatorname{supp}\phi\cap\operatorname{supp}\psi=\varnothing.
\]
}
\end{proposition}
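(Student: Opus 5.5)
The plan is to compute the second Gâteaux variation of $V$ directly from the representation \eqref{eq:sec5-pairwise-form}, differentiating under the integral sign. Fix $\theta,\phi,\psi\in C(\I)$ and $x\in\I$. Since $V$ is $C^2$ as a map $C(\I)\to C(\I)$, its second Fréchet derivative coincides with the mixed Gâteaux derivative. Moreover, evaluation at $x$ is a bounded linear functional on $C(\I)$. Hence
\[
D^2V[\theta](\phi,\psi)(x)=\frac{\partial^2}{\partial s\,\partial t}\Big|_{s=t=0} V[\theta+s\phi+t\psi](x).
\]
We therefore reduce to computing the mixed partial derivative in $(s,t)$ of the scalar function
\[
g(s,t):=\int_0^1 \kappa\bigl(\theta(x)+s\phi(x)+t\psi(x),\,\theta(y)+s\phi(y)+t\psi(y);x,y\bigr)\,\dd y .
\]

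The first step uses $\phi(x)=\psi(x)=0$: the first argument of $\kappa$ is frozen at $u_0:=\theta(x)$. The integrand is therefore $k_y(v):=\kappa(u_0,v;x,y)$ evaluated at $v=\theta(y)+s\phi(y)+t\psi(y)$. By hypothesis, $k_y\in C^2(\R)$ with $|k_y'|,|k_y''|$ bounded uniformly in $y$. The integrand's first and second $(s,t)$-derivatives are
\[
k_y'(\cdot)\phi(y),\quad k_y'(\cdot)\psi(y),\quad k_y''(\cdot)\phi(y)\psi(y),
\]
and these are dominated by constants times $\|\phi\|_\infty$, $\|\psi\|_\infty$ and $\|\phi\|_\infty\|\psi\|_\infty$. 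By dominated convergence, combined with the mean value theorem, we may differentiate twice under the integral. Measurability in $y$ follows from joint measurability of $\kappa$ and continuity of $\theta,\phi,\psi$. This gives
\[
\partial_s\partial_t g(0,0)=\int_0^1 \partial_{22}^2\kappa\bigl(\theta(x),\theta(y);x,y\bigr)\,\phi(y)\psi(y)\,\dd y .
\]

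The second step uses disjointness of supports. Since $\operatorname{supp}\phi\cap\operatorname{supp}\psi=\varnothing$, we have $\phi(y)\psi(y)=0$ for every $y\in\I$. The integrand vanishes identically, so $D^2V[\theta](\phi,\psi)(x)=0$.

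Without the condition $\phi(x)=\psi(x)=0$, the terms involving $\partial_{11}^2\kappa\,\phi(x)\psi(x)$ and $\partial_{12}^2\kappa$ would survive; this is why that condition is imposed. The only delicate point is justifying that the Fréchet second derivative at $x$ equals the pointwise mixed derivative of $g$. I would handle it by noting that $(s,t)\mapsto V[\theta+s\phi+t\psi]$ is $C^2$ into $C(\I)$, so composing with the evaluation functional commutes with differentiation. The uniform $C^2$ bounds on $\kappa$ supply the domination needed for the interchange of derivative and integral.
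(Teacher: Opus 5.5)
Your proposal is correct and follows essentially the paper's argument. You differentiate the pairwise representation twice under the integral, which the uniform bounds on the second derivatives of $\kappa$ justify, and then observe that $\phi(x)=\psi(x)=0$ together with $\phi(y)\psi(y)\equiv 0$ kills every term. The paper instead writes out all three second-order terms ($\partial_{11}^2\kappa$, $\partial_{12}^2\kappa$, $\partial_{22}^2\kappa$) before noting each vanishes, whereas you freeze the first argument first and only ever see the $\partial_{22}^2\kappa$ term; this difference is cosmetic.
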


\begin{proof}
{ 
Write \(V\) in the form \eqref{eq:sec5-pairwise-form}. Since \(\kappa\) has bounded derivatives up to order two, differentiation under the integral sign is justified by dominated convergence. Hence
\begin{align*}
DV[\theta]\phi(x)
&=
\int_0^1
\Big(
\partial_1\kappa\bigl(\theta(x),\theta(y);x,y\bigr)\phi(x)
+\partial_2\kappa\bigl(\theta(x),\theta(y);x,y\bigr)\phi(y)
\Big)\,\dd y,
\end{align*}
and a second differentiation gives
\begin{align*}
D^2V[\theta](\phi,\psi)(x)
&=
\int_0^1
\partial_{11}\kappa\bigl(\theta(x),\theta(y);x,y\bigr)\phi(x)\psi(x)\,\dd y\\
&\quad
+\int_0^1
\partial_{12}\kappa\bigl(\theta(x),\theta(y);x,y\bigr)
\bigl(\phi(x)\psi(y)+\psi(x)\phi(y)\bigr)\,\dd y\\
&\quad
+\int_0^1
\partial_{22}\kappa\bigl(\theta(x),\theta(y);x,y\bigr)\phi(y)\psi(y)\,\dd y.
\end{align*}
If \(\phi(x)=\psi(x)=0\) and \(\operatorname{supp}\phi\cap\operatorname{supp}\psi=\varnothing\), then \(\phi(y)\psi(y)=0\) for every \(y\in \I\), so all three terms vanish. This proves \eqref{eq:sec5-pairwise-vanishing}.}
\end{proof}

\begin{lemma}[Extension from \(C(\I)\) to \(L^\infty(\I)\)]\label{lem:sec5-extension}
Let \(V:\Linfty(\I)\to\Linfty(\I)\) be an operator whose restriction to \(C(\I)\) maps
\(C(\I)\) into \(C(\I)\). If \(V|_{C(\I)}\) is genuinely nonpairwise in the sense of
Definition~\ref{def:sec5-pairwise}, then \(V\) cannot admit a pairwise representation, valid for
all \(L^\infty\) inputs, by a kernel satisfying the regularity and boundedness conditions of
Definition~\ref{def:sec5-pairwise}.
\end{lemma}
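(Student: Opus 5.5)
The plan is to argue by contraposition, reducing everything to the restriction argument already anticipated at the start of this subsection. Suppose, for contradiction, that there is a jointly measurable kernel \(\kappa:\R^2\times\I^2\to\R\) satisfying the regularity and boundedness conditions of Definition~\ref{def:sec5-pairwise}, with
\[
V[\theta](x)=\int_0^1 \kappa\bigl(\theta(x),\theta(y);x,y\bigr)\,\dd y
\]
for all \(\theta\in\Linfty(\I)\) and almost every \(x\in\I\). The aim is to show that the same kernel is a pairwise representation of \(V|_{C(\I)}\) in the sense of Definition~\ref{def:sec5-pairwise}. That contradicts the assumed genuine nonpairwise character of \(V|_{C(\I)}\).

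The first step is to restrict the identity to inputs \(\theta\in C(\I)\subset\Linfty(\I)\). The right-hand side is then defined for every \(x\), not just almost every \(x\), because \(\kappa\) is bounded on bounded sets uniformly in \((x,y)\) and jointly measurable. The left-hand side is the continuous function \(V|_{C(\I)}[\theta]\).

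The second step is to upgrade the a.e.\ identity to an identity for every \(x\in\I\), as \eqref{eq:sec5-pairwise-form} demands. I would try to show that \(x\mapsto\int_0^1\kappa(\theta(x),\theta(y);x,y)\,\dd y\) has a continuous representative agreeing with \(V[\theta]\) almost everywhere, so that the two continuous functions coincide everywhere.

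The third step is the remaining requirements of Definition~\ref{def:sec5-pairwise}. These are \(C^2\) regularity of \(V|_{C(\I)}\) and the kernel conditions. The kernel conditions are inherited verbatim from the assumed representation. The \(C^2\) regularity is either part of the standing hypothesis that \(V|_{C(\I)}\) is genuinely nonpairwise, since that notion is only defined for \(C^2\)-maps, or it follows by differentiating under the integral as in the proof of Proposition~\ref{prop:sec5-pairwise-vanishing}.

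The main obstacle is the a.e.-versus-everywhere issue in the second step. The kernel is only measurable in \((x,y)\), so the integral need not be continuous in \(x\) for every \(x\). To handle this, I would read ``valid for all \(L^\infty\) inputs'' as equality in \(\Linfty(\I)\), that is, almost everywhere. If the everywhere form is needed, I would modify \(\kappa\) on the null set of \(x\)-slices where the identity fails: on that set, replace \(\kappa(u,v;x,y)\) by a kernel reproducing \(V\) at those points. The simplest choice is to redefine on the exceptional \(x\)-slice via a kernel from a nearby good slice, using continuity of \(V[\theta]\). If this causes difficulties, I would instead note that Proposition~\ref{prop:sec5-pairwise-vanishing}, the only consequence of representability used downstream, holds for almost every \(x\) under an a.e.\ representation. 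A continuity argument then extends that vanishing to every \(x\), which is all that the mixed-derivative certificate needs.
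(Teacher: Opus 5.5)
Your core argument (assume a kernel representation valid on \(\Linfty(\I)\), restrict it to \(C(\I)\), and contradict genuine nonpairwiseness) is exactly the paper's proof, and it consists of this restriction alone. The paper takes the \(\Linfty\) representation to hold in the same pointwise form as Definition~\ref{def:sec5-pairwise}. Under that reading your Steps~1 and~3 already finish the argument and Step~2 never arises.

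The gap is that you adopt the almost-everywhere reading and then do not close Step~2.
\begin{itemize}
\item A continuous function agreeing a.e.\ with \(V[\theta]\) does not help. The definition requires the specific function \(G_\theta(x):=\int_0^1\kappa(\theta(x),\theta(y);x,y)\,\dd y\), with the given kernel, to equal \(V[\theta](x)\) at every \(x\), and \(G_\theta\) need not be continuous.
\item There is no single ``null set of \(x\)-slices where the identity fails'' a priori, because it depends on \(\theta\). You would first need a \(\theta\)-independent null set \(N\). You can get one from a countable dense subset of \(C(\I)\), the bound \(|G_\theta(x)-G_{\theta'}(x)|\le C\norm{\theta-\theta'}_{\Linfty}\) (valid at every \(x\) by the uniform first-derivative bounds on \(\kappa\)), and continuity of \(V\) on \(C(\I)\).
\item Most importantly, importing the kernel of a nearby good slice \(x'\) into \(x\in N\) produces \(\int_0^1\kappa(\theta(x),\theta(y);x',y)\,\dd y\). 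This is within \(C|\theta(x)-\theta(x')|\) of \(V[\theta](x')\), and hence only close to \(V[\theta](x)\). Since \(x'\) must be chosen independently of \(\theta\), you never get exact equality. Exactness would require extracting a limiting kernel from \(\kappa(\cdot,\cdot;x_n,\cdot)\) along \(x_n\to x\), by a weak-\(*\) compactness argument that also preserves the \(C^2\) bounds and joint measurability. Nothing of this kind is supplied.
\end{itemize}

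Your fallback is correct once made precise, and it is what the downstream application actually uses.
\begin{itemize}
\item Fix \(\theta,\phi,\psi\in C(\I)\). Rational \((s,t)\) and continuity in \((s,t)\) give one null set outside which \(V[\theta+s\phi+t\psi](x)=G_{\theta+s\phi+t\psi}(x)\) for all real \(s,t\).
\item At such \(x\), \(D^2V[\theta](\phi,\psi)(x)\) equals the kernel formula from the proof of Proposition~\ref{prop:sec5-pairwise-vanishing}. For disjointly supported \(\phi,\psi\) this is bounded by \(C(|\phi(x)|+|\psi(x)|)\).
\item Continuity of \(D^2V[\theta](\phi,\psi)\) propagates this bound to every \(x\), giving vanishing wherever \(\phi(x)=\psi(x)=0\).
\end{itemize}
You need this quantitative bound rather than ``vanishing for a.e.\ \(x\)'': at boundary points of the supports, nearby good points need not satisfy \(\phi=\psi=0\).

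However, this establishes a different statement: a nonzero mixed second variation excludes an a.e.\ \(\Linfty\) representation. The lemma as stated has as its hypothesis mere nonrepresentability of \(V|_{C(\I)}\), with no certificate. So you have two options:
\begin{itemize}
\item adopt the pointwise reading, in which case the proof is the paper's one-line restriction and Step~2 should be dropped; or
\item keep the a.e.\ reading and replace the lemma by the certificate version, which is all that Theorem~\ref{thm:sec5-triplet-criterion} needs.
\end{itemize}
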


\begin{proof}
If such a pairwise kernel representation were valid for all \(\theta\in\Linfty(\I)\), then the
same formula, with the same smooth bounded-kernel assumptions, would hold after restricting to the
subspace \(C(\I)\subset\Linfty(\I)\). This would make \(V|_{C(\I)}\) pairwise-representable,
contradicting the assumed genuine nonpairwiseness on \(C(\I)\).
\end{proof}

\begin{proposition}[Continuum nonpairwise certificate]\label{prop:sec5-nonpairwise-certificate}
{ 
Let \(V:C(\I)\to C(\I)\) be a \(C^2\) vector field. Suppose there exist \(\theta,\phi,\psi\in C(\I)\) and \(x\in \I\) such that
\[
\phi(x)=\psi(x)=0,
\qquad
\operatorname{supp}\phi\cap\operatorname{supp}\psi=\varnothing,
\]
and
\begin{equation}\label{eq:sec5-certificate}
D^2V[\theta](\phi,\psi)(x)\neq 0.
\end{equation}
Then \(V\) is genuinely nonpairwise.
}
\end{proposition}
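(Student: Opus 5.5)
The argument is by contraposition, using Proposition~\ref{prop:sec5-pairwise-vanishing} directly. Suppose, for contradiction, that \(V\) were pairwise-representable in the sense of Definition~\ref{def:sec5-pairwise}. Then there is a jointly measurable kernel \(\kappa\) with \((u,v)\mapsto\kappa(u,v;x,y)\) in \(C^2(\R^2)\) and derivatives up to order two bounded uniformly in \((x,y)\in\I^2\), such that \eqref{eq:sec5-pairwise-form} holds for all \(\theta\in C(\I)\) and \(x\in\I\). Proposition~\ref{prop:sec5-pairwise-vanishing} then applies to \(V\). For the given triple \(\theta,\phi,\psi\in C(\I)\) and point \(x\), with \(\phi(x)=\psi(x)=0\) and disjoint supports, it gives \(D^2V[\theta](\phi,\psi)(x)=0\). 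This contradicts \eqref{eq:sec5-certificate}, so no such kernel exists and \(V\) is genuinely nonpairwise.

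The only point needing care is consistency of the second derivative. In \eqref{eq:sec5-certificate}, \(D^2V\) is the second Fréchet derivative of the given \(C^2\) map \(V:C(\I)\to C(\I)\). In Proposition~\ref{prop:sec5-pairwise-vanishing}, it is computed by differentiating the kernel representation under the integral sign. These must be the same object. Both are determined by the map \(V\) itself, since Fréchet derivatives of a map are unique. The differentiation under the integral, justified by dominated convergence using the uniform bounds on \(\kappa\) and its derivatives, produces the directional (Gâteaux) second derivative. For a \(C^2\) map this coincides with the Fréchet one. Hence evaluating \(D^2V[\theta](\phi,\psi)\) at the point \(x\) gives the same value in both settings. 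Point evaluation is meaningful because the values lie in \(C(\I)\).

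I expect no real obstacle. The substantive work is already in Proposition~\ref{prop:sec5-pairwise-vanishing}, and this statement is its logical contrapositive. The one remark worth adding is that the certificate is only as strong as the representation class. By Remark~\ref{rem:sec5-nonpairwise-scope}, the conclusion excludes smooth bounded kernels only. By Lemma~\ref{lem:sec5-extension}, it extends to exclude such representations on \(\Linfty(\I)\) for operators, like \(T\), defined there.
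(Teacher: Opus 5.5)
Your proposal is correct and matches the paper's proof, which is simply the contrapositive of Proposition~\ref{prop:sec5-pairwise-vanishing}. Your remark that the Fréchet second derivative of the \(C^2\) map \(V\) agrees with the one computed pointwise by differentiating under the integral is extra care that the paper leaves implicit.
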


\begin{proof}
{ 
This is the contrapositive of Proposition~\ref{prop:sec5-pairwise-vanishing}.}
\end{proof}

\begin{theorem}[Continuum criterion for the triplet operator]\label{thm:sec5-triplet-criterion}
Assume in addition that \(\Gamma\in C_b^2(\R)\) and \(H\in C_b^3(\R^2)\). Define
\begin{align}
F(u,v,w)
&:=
-\Gamma(v-u)\partial_1H(u,v)\,H(u,w)\Gamma(w-u)
\nonumber\\
&\quad
-\Gamma(v-u)\partial_2H(u,v)\,H(v,w)\Gamma(w-v),
\label{eq:sec5-triplet-integrand}
\end{align}
so that
\[
T[\theta](x)=\int_0^1\!\int_0^1 F\bigl(\theta(x),\theta(y),\theta(z)\bigr)\,\dd z\,\dd y.
\]
If there exist \(u_\ast,v_\ast,w_\ast\in \R\) such that
\begin{equation}\label{eq:sec5-triplet-mixed}
M(u_\ast,v_\ast,w_\ast)
:=
\partial_v\partial_wF(u_\ast,v_\ast,w_\ast)
+\partial_v\partial_wF(u_\ast,w_\ast,v_\ast)
\neq 0,
\end{equation}
then the triplet operator \(T:C(\I)\to C(\I)\) is genuinely nonpairwise relative to the smooth bounded-kernel class of Definition~\ref{def:sec5-pairwise}. In particular, \(T\) is not pairwise-representable in the original \(L^\infty\)-based continuum setting either, within the same smooth bounded-kernel class.
\end{theorem}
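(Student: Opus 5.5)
The plan is to apply the certificate of Proposition~\ref{prop:sec5-nonpairwise-certificate} to \(V=T|_{C(\I)}\), and then pass to the \(L^\infty\) setting using Lemma~\ref{lem:sec5-extension}. This requires three steps: show that \(T\) is a \(C^2\) map \(C(\I)\to C(\I)\); compute its second variation; and choose \(\theta,\phi,\psi,x\) so that this second variation equals a positive multiple of \(M(u_\ast,v_\ast,w_\ast)\).

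\emph{Regularity.} Under \(\Gamma\in C_b^2\) and \(H\in C_b^3\), the integrand \(F\) is \(C^2\) with bounded derivatives up to order two. These derivatives are also uniformly continuous on bounded sets, because periodicity reduces everything to a compact domain. For \(\theta\in C(\I)\), the map \(x\mapsto T[\theta](x)\) is continuous by dominated convergence. Fr\'echet differentiability twice in the sup-norm then follows from Taylor's formula with uniformly continuous remainder, which is the standard Nemytskii argument on \(C(\I)\). Differentiating under the integral sign gives
\[
D^2T[\theta](\phi,\psi)(x)=\int_0^1\!\int_0^1 \sum_{a,b}\partial_a\partial_bF\bigl(\theta(x),\theta(y),\theta(z)\bigr)\,\phi_a\psi_b\,\dd z\,\dd y,
\]
where \((\phi_1,\phi_2,\phi_3)=(\phi(x),\phi(y),\phi(z))\), and similarly for \(\psi\).

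\emph{Localization.} We impose \(\phi(x)=\psi(x)=0\) and disjoint supports. Every term containing \(\phi_1\) or \(\psi_1\) then vanishes. So do the \((2,2)\) and \((3,3)\) terms, since \(\phi(y)\psi(y)=0\). Only
\[
\iint\Bigl[\partial_v\partial_wF\,\phi(y)\psi(z)+\partial_w\partial_vF\,\phi(z)\psi(y)\Bigr]\,\dd z\,\dd y
\]
survives. Relabelling \(y\leftrightarrow z\) in the second term turns it into
\[
\iint\Bigl[\partial_v\partial_wF(\theta(x),\theta(y),\theta(z))+\partial_v\partial_wF(\theta(x),\theta(z),\theta(y))\Bigr]\,\phi(y)\psi(z)\,\dd z\,\dd y
=\iint M\bigl(\theta(x),\theta(y),\theta(z)\bigr)\,\phi(y)\psi(z)\,\dd z\,\dd y .
\]
This uses symmetry of mixed partials, which holds since \(F\in C^2\).

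\emph{Choice of test functions.} Fix \(x=0\). Pick disjoint closed intervals \(J_\phi,J_\psi\subset(0,1]\) that also avoid a neighbourhood of \(0\). Choose \(\theta\in C(\I)\) with \(\theta(0)=u_\ast\), \(\theta\equiv v_\ast\) on \(J_\phi\), and \(\theta\equiv w_\ast\) on \(J_\psi\); linear interpolation elsewhere makes it continuous. Take \(\phi,\psi\ge0\) continuous bump functions with \(\operatorname{supp}\phi\subset J_\phi\), \(\operatorname{supp}\psi\subset J_\psi\) and positive integrals. Then \(\phi(0)=\psi(0)=0\), the supports are disjoint, and the integrand above is identically \(M(u_\ast,v_\ast,w_\ast)\) on the support of \(\phi(y)\psi(z)\). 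Hence
\[
D^2T[\theta](\phi,\psi)(0)=M(u_\ast,v_\ast,w_\ast)\int\phi\int\psi\neq0 .
\]
Proposition~\ref{prop:sec5-nonpairwise-certificate} therefore shows that \(T|_{C(\I)}\) is genuinely nonpairwise, and Lemma~\ref{lem:sec5-extension} gives the \(L^\infty\) statement.

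The main obstacle is the regularity step: proving rigorously that \(T\) is twice Fr\'echet differentiable on \(C(\I)\) with the displayed second derivative. This is the reason for the strengthened hypotheses \(\Gamma\in C_b^2\) and \(H\in C_b^3\), since \(F\) contains \(\partial_rH\) and its second derivatives need third derivatives of \(H\). I would handle this through uniform continuity of \(D^2F\) on the compact range of \((\theta(x),\theta(y),\theta(z))\), which makes the second-order Taylor remainder \(o(\|h\|_\infty^2)\) uniformly in \((x,y,z)\).
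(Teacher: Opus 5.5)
Your proposal is correct and follows essentially the same route as the paper. The paper also establishes Nemytskii \(C^2\)-regularity of \(T\) on \(C(\I)\) and writes down the six-term second-variation formula. It then takes \(\theta\) constant equal to \(v_\ast\) and \(w_\ast\) on two disjoint intervals away from the evaluation point, with nonnegative bumps \(\phi,\psi\) supported there, which yields \(D^2T[\theta](\phi,\psi)(x_\ast)=M(u_\ast,v_\ast,w_\ast)\int\phi\int\psi\), and concludes via Proposition~\ref{prop:sec5-nonpairwise-certificate} and Lemma~\ref{lem:sec5-extension}. The only cosmetic difference is that you merge the two surviving cross terms by relabelling \(y\leftrightarrow z\) and using symmetry of mixed partials, whereas the paper evaluates each cross term directly from the constant values of \(\theta\) on the two supports.
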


\begin{proof}
Under the stated regularity assumptions, \(F\in C_b^2(\R^3)\). Standard Nemytskii calculus on \(C(\I)\) shows that \(T:C(\I)\to C(\I)\) is \(C^2\), and differentiation under the double integral yields
\begin{align}
D^2T[\theta](\phi,\psi)(x)
&=
\int_0^1\!\int_0^1
\partial_{11}F(\Theta)\,\phi(x)\psi(x)\,\dd z\,\dd y
\nonumber\\
&\quad
+\int_0^1\!\int_0^1
\partial_{12}F(\Theta)\bigl(\phi(x)\psi(y)+\psi(x)\phi(y)\bigr)\,\dd z\,\dd y
\nonumber\\
&\quad
+\int_0^1\!\int_0^1
\partial_{13}F(\Theta)\bigl(\phi(x)\psi(z)+\psi(x)\phi(z)\bigr)\,\dd z\,\dd y
\nonumber\\
&\quad
+\int_0^1\!\int_0^1
\partial_{22}F(\Theta)\,\phi(y)\psi(y)\,\dd z\,\dd y
\nonumber\\
&\quad
+\int_0^1\!\int_0^1
\partial_{33}F(\Theta)\,\phi(z)\psi(z)\,\dd z\,\dd y
\nonumber\\
&\quad
+\int_0^1\!\int_0^1
\partial_{23}F(\Theta)\bigl(\phi(y)\psi(z)+\psi(y)\phi(z)\bigr)\,\dd z\,\dd y,
\label{eq:sec5-second-variation-T}
\end{align}
where \(\Theta=(\theta(x),\theta(y),\theta(z))\).

Choose \(x_\ast\in \I\) and two disjoint closed intervals \(I_y,I_z\subset \I\setminus\{x_\ast\}\) with nonempty interiors. Let \(\theta\in C(\I)\) satisfy
\[
\theta(x_\ast)=u_\ast,
\qquad
\theta\equiv v_\ast \text{ on }I_y,
\qquad
\theta\equiv w_\ast \text{ on }I_z,
\]
and choose nonnegative functions \(\phi,\psi\in C(\I)\) such that
\[
\operatorname{supp}\phi\subset I_y,
\qquad
\operatorname{supp}\psi\subset I_z,
\qquad
\int_0^1 \phi(y)\,\dd y>0,
\qquad
\int_0^1 \psi(z)\,\dd z>0.
\]
Then \(\phi(x_\ast)=\psi(x_\ast)=0\) and \(\operatorname{supp}\phi\cap\operatorname{supp}\psi=\varnothing\). Hence every term in \eqref{eq:sec5-second-variation-T} vanishes except the \(\partial_{23}F\) contribution, and because \(\theta\) is constant on \(\operatorname{supp}\phi\) and \(\operatorname{supp}\psi\), we obtain
\begin{align*}
D^2T[\theta](\phi,\psi)(x_\ast)
&=
\int_0^1\!\int_0^1
\partial_{23}F\bigl(u_\ast,v_\ast,w_\ast\bigr)\phi(y)\psi(z)\,\dd z\,\dd y\\
&\quad
+\int_0^1\!\int_0^1
\partial_{23}F\bigl(u_\ast,w_\ast,v_\ast\bigr)\psi(y)\phi(z)\,\dd z\,\dd y\\
&=
M(u_\ast,v_\ast,w_\ast)
\Bigl(\int_0^1\phi(y)\,\dd y\Bigr)
\Bigl(\int_0^1\psi(z)\,\dd z\Bigr).
\end{align*}
By \eqref{eq:sec5-triplet-mixed}, this quantity is nonzero. Proposition~\ref{prop:sec5-nonpairwise-certificate} therefore implies that \(T\) is genuinely nonpairwise on \(C(\I)\). By Lemma~\ref{lem:sec5-extension}, \(T\) is also genuinely nonpairwise in the original \(L^\infty\)-based continuum setting, within the same smooth bounded-kernel class.
\end{proof}

We now illustrate the abstract nonpairwise criterion from Theorem~\ref{thm:sec5-triplet-criterion} with the canonical adaptive Kuramoto choice of coupling functions.
This example shows concretely that the continuum triplet operator $T$ produced by the slow-manifold reduction is not merely formally triplet-valued, but is genuinely nonpairwise in the representability sense introduced above.
\begin{example}[Adaptive Kuramoto case]\label{ex:sec5-kuramoto-triplet}
Let
\[
\Gamma(\phi)=\sin\phi,
\qquad
H(u,v)=\alpha+\cos(u-v),
\qquad
\alpha\neq 0.
\]
Then the continuum triplet operator \(T\) is genuinely nonpairwise relative to the smooth bounded-kernel class of Definition~\ref{def:sec5-pairwise}.
\end{example}

\begin{proof}
For this choice of \(\Gamma\) and \(H\), the integrand \eqref{eq:sec5-triplet-integrand} becomes
\[
F(u,v,w)
=
-\sin^2(v-u)\bigl(\alpha+\cos(u-w)\bigr)\sin(w-u)
+\sin^2(v-u)\bigl(\alpha+\cos(v-w)\bigr)\sin(w-v).
\]
Set
\[
P(y):=\sin^2 y,
\qquad
Q(x):=(\alpha+\cos x)\sin x.
\]
Then
\[
F(u,v,w)=-P(v-u)Q(w-u)+P(v-u)Q(w-v).
\]
Hence
\[
\partial_v\partial_wF(u,v,w)
=
-P'(v-u)Q'(w-u)+P'(v-u)Q'(w-v)-P(v-u)Q''(w-v).
\]
We evaluate at \((u_\ast,v_\ast,w_\ast)=(0,\pi/2,0)\). The point is chosen because
\(P'(\pi/2)=0\) annihilates the two pairwise-type cross terms \(P'(v-u)Q'(\cdot)\) in
\(\partial_v\partial_wF\), isolating the single genuinely-triplet contribution
\(-P(v-u)Q''(w-v)\), which is exactly the mixed second derivative that no pairwise kernel can reproduce. Any
triple with \(P'(v-u)=0\), \(P(v-u)\neq0\), and \(Q''(w-v)\neq0\) would serve equally well.
Concretely, using
\[
P'(\pi/2)=0,
\qquad
P(\pi/2)=1,
\qquad
Q''(-\pi/2)=\alpha,
\]
gives
\[
\partial_v\partial_wF(0,\pi/2,0)=-\alpha.
\]
For the swapped arguments,
\[
F(u,w,v)=-P(w-u)Q(v-u)+P(w-u)Q(v-w),
\]
so
\[
\partial_v\partial_wF(u,w,v)
=
-P'(w-u)Q'(v-u)+P'(w-u)Q'(v-w)-P(w-u)Q''(v-w).
\]
Evaluating \(\partial_v\partial_w[F(u,w,v)]\) at \((u,v,w)=(0,\pi/2,0)\) gives
\[
\partial_v\partial_wF(0,0,\pi/2)=0,
\]
because \(P(0)=P'(0)=0\). Therefore
\[
M(0,\pi/2,0)
=
\partial_v\partial_wF(0,\pi/2,0)
+\partial_v\partial_wF(0,0,\pi/2)
=
-\alpha
\neq 0.
\]
Theorem~\ref{thm:sec5-triplet-criterion} now applies and shows that \(T\) is genuinely nonpairwise.
\end{proof}

\section{Discussion and Outlook}\label{sec:discussion}

The central question addressed in this paper is whether two natural operations associated with
adaptive fast--slow oscillator networks, Fenichel reduction and the dense-graph continuum limit,
are compatible. The main conclusion is a qualified compatibility statement at the
vector-field level through first order in \(\varepsilon\): along admissible equal-cell step approximations, both the
reduce-first route and the continuum-first route produce the same first-order continuum
truncation, with the same leading-order operator \(K\), the same first-order pairwise correction
\(P\), and the same triplet operator \(T\), up to explicitly controlled
\(O(\varepsilon^2)\) remainders.

The conceptual significance is that the higher-order continuum interaction is not inserted
phenomenologically. It is inherited from pairwise adaptive microscopic dynamics and is preserved,
to this order, under both reduction and passage to the continuum. Together with the
finite-dimensional analysis in~\cite{KuehnMurphy}, Theorem~\ref{thm:sec5-triplet-criterion}, and
Example~\ref{ex:sec5-kuramoto-triplet}, this shows that for coupling functions satisfying the
continuum criterion, and in particular for the adaptive Kuramoto example, the first-order triplet
operator is not pairwise-representable in the smooth bounded-kernel class. In that precise sense,
the effective nonpairwise structure is not a finite-\(N\) artefact.

At the technical level, the proof succeeds because two favourable structures align. The continuum
limit has the dense-graph integral form familiar from continuum and graphon-type
models~\cite{Neunzert1984,Lancellotti2005,Medvedev2014,ChibaMediano2019}. At the same time, the
fast subsystem has the explicit relaxation law \(a_t=\varepsilon^{-1}(-a+h_0(\theta))\), so the
critical manifold is the graph \(a=h_0(\theta)\) and the normal linearisation is the fixed
operator \(-I\). This makes it possible to combine continuum-limit arguments with a
Lyapunov--Perron construction in a comparatively transparent Banach-space
setting~\cite{Hummel-Kuehn,Kuehn-Sulzbach}. The closest neighbouring continuum-limit results for
adaptive networks include~\cite{GkogkasKuehnXu2023,Throm2024EJAM,CestnikMartens2025}; the present
work differs by focusing on the fast-adaptation regime and on compatibility with Fenichel
reduction.

The present theory also has limitations: it focuses on the emergence of higher-order interaction
terms on the slow manifold rather than on optimal regularity. In particular, the discrete-to-continuum results are
proved in the dense-interaction regime, using labelled \(L^\infty\) equal-cell step embeddings and
data satisfying the compatible step-approximation hypotheses. Extending the argument beyond this
framework, for example to graphons considered only up to relabelling, \(L^p\)-graphons, graphops, or
sparse, random, or heterogeneous graph limits, is likely to require different convergence notions,
stability estimates, and limit objects~\cite{KaliuzhnyiVerbovetskyiMedvedev2017}. The explicit reduced-vector-field
expansion is carried out only through the first correction in \(\varepsilon\): a second-order
expansion would require explicit \(\varepsilon^2\)-terms with an \(O(\varepsilon^3)\) remainder, and
a trajectory-level comparison between the exact reduce-first and continuum-first reduced flows
would require additional arguments beyond those developed here. 

These limitations point to several natural directions for further work. One is to continue the
slow-manifold expansion to higher orders, where one expects additional higher-order nonlinear
integral terms beyond the triplet term and hence a richer hierarchy of effective interactions
generated by fast adaptation. A specific question in this direction is whether the first-order pairwise correction \(P[\theta;\omega]\) admits an analogous nonpairwise certificate. By construction \(P\) is built from a single integral against a smooth bounded kernel and is therefore pairwise-representable in the sense of Definition~\ref{def:sec5-pairwise}; the question is rather whether higher-order corrections to the slow vector field beyond first order in \(\varepsilon\) carry genuinely nonpairwise structure detectable by an analogue of Theorem~\ref{thm:sec5-triplet-criterion}, perhaps via a third-order mixed-derivative test on disjointly supported triples \((\phi_1,\phi_2,\phi_3)\). A positive answer would extend the present rigour-of-emergence framework to a hierarchy of higher-order operators rather than the single triplet term certified here.

\begin{conjecture}[All-order compatibility under higher regularity]\label{conj:all-order}
Suppose \(\Gamma\) and \(H\) are smooth with all derivatives bounded, and the
initial data are correspondingly regular and compatible. Then the reduce-first and continuum-first
routes induce reduced slow vector fields with identical asymptotic \(\varepsilon\)-expansions: for
every \(k\ge1\) the two routes agree up to an \(O(\varepsilon^{k+1})\) remainder, so that
\(\mathrm{CL}\circ\mathrm{Red}_\varepsilon\) and \(\mathrm{Red}^\infty_\varepsilon\circ\mathrm{CL}\)
commute to all orders in \(\varepsilon\). The present paper establishes the case \(k=1\).
\end{conjecture}

This is necessarily a statement about the asymptotic \(\varepsilon\)-series rather
than about exact equality of slow manifolds. Because the slow directions are neutral
(centre-type), the Lyapunov--Perron-selected manifolds are pinned down only up to exponentially
small \(O(e^{-c/\varepsilon})\) corrections, so beyond-all-orders discrepancies between the two
routes are not excluded. Proving even the term-by-term agreement would require a higher-order
Lyapunov--Perron expansion with \(C^k\)-control of the remainders, together with matching of the
selected representatives at each order.

A second direction is to study the continuum reduced equation as a
dynamical system in its own right, for example with respect to synchronisation, multistability,
clustering, and bifurcation phenomena, now in a model where the higher-order terms are derived
rather than imposed~\cite{BickAshwinRodrigues2016Chaos,SkardalArenas1}. A third is to connect the
present dense-graph framework with more general continuum descriptions of large oscillator systems
and adaptive graph limits.

More broadly, the message of this work is that continuum limits can preserve
structural effects created by model reduction. When microscopic dynamics are adaptive and
multiscale, the macroscopic limit can retain a precise memory of those mechanisms in the form of
effective higher-order operators. Developing a general theory for when such operators emerge,
persist, and shape collective dynamics seems a promising step towards a more principled
mathematical description of adaptive network systems.

\appendix

\section{Technical Lemmas for the Lyapunov--Perron Construction}\label{app:lp-technical}

This appendix collects the auxiliary estimates used in Section~\ref{sec:infdim-reduction}.
\begin{lemma}[Finite-window \(L^\infty\) superposition calculus]
\label{lem:sec4-nemytskii-calculus}
Let \((\Omega,\mu)\) be a finite measure space and let
\(f\in C_b^k(\R^m;\R^\ell)\), \(k\ge1\). Then the Nemytskii operator
\[
\mathcal N_f:L^\infty(\Omega;\R^m)\to L^\infty(\Omega;\R^\ell),
\qquad
(\mathcal N_f u)(\zeta):=f(u(\zeta)),
\]
is \(C^k\). For \(1\le j\le k\),
\begin{equation}\label{eq:sec4-nemytskii-derivatives}
\bigl(\DD^j\mathcal N_f(u)[v_1,\ldots,v_j]\bigr)(\zeta)
=
\DD^j f(u(\zeta))[v_1(\zeta),\ldots,v_j(\zeta)],
\end{equation}
and
\[
\|\DD^j\mathcal N_f(u)\|_{\mathcal B^j(L^\infty,L^\infty)}
\le
\|\DD^j f\|_{L^\infty(\R^m)}.
\]
The same conclusion remains valid after precomposition and postcomposition with bounded linear
maps; in particular it applies to the pullbacks
\(\theta\mapsto ((x,y)\mapsto (\theta(x),\theta(y)))\) and to the averaging map
\[
J:L^\infty(\I^2)\to L^\infty(\I),
\qquad
(Jq)(x):=\int_0^1 q(x,y)\,\dd y .
\]

Moreover, let \(E,Y\) be Banach spaces, let \(\Phi\in C^1(E,Y)\), fix \(T>0\), and let
\(z\in C([-T,0];E)\). If \(w\in C([-T,0];E)\) and
\(\|w\|_{C([-T,0];E)}\to0\), then the Taylor remainders are uniform on the finite window:
\[
\sup_{s\in[-T,0]}
\frac{
\|\Phi(z(s)+w(s))-\Phi(z(s))-\DD\Phi(z(s))w(s)\|_Y
}{
\|w\|_{C([-T,0];E)}
}
\longrightarrow0 .
\]
\end{lemma}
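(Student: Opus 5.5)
The plan is to prove the three assertions of Lemma~\ref{lem:sec4-nemytskii-calculus} in order: $C^k$-regularity with the pointwise derivative formula, stability under composition with bounded linear maps, and the uniform Taylor remainder on a compact window. All of them rest on the same ingredients, namely uniform continuity of derivatives of $f$ on bounded sets and the fact that the $L^\infty$ norm is an essential supremum.

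\emph{Step 1: first derivative.} Fix $u,v\in L^\infty(\Omega;\R^m)$. For a.e.\ $\zeta$, Taylor's formula with integral remainder gives
\[
f(u+v)-f(u)-\DD f(u)v=\int_0^1\bigl(\DD f(u+sv)-\DD f(u)\bigr)v\,\dd s .
\]
The values of $u$ lie a.e.\ in a compact ball $\overline{B}_R$. Since $\DD f$ is continuous, it is uniformly continuous on $\overline{B}_{R+1}$, with modulus of continuity $\varpi$. Hence the essential supremum of the remainder is at most $\varpi(\|v\|_\infty)\|v\|_\infty$, which is $o(\|v\|_\infty)$. This proves Fr\'echet differentiability with the pointwise derivative formula, and the operator-norm bound $\|\DD f\|_{L^\infty}$ follows directly from that formula.

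The same estimate gives continuity of $u\mapsto\DD\mathcal N_f(u)$ in $\mathcal B(L^\infty,L^\infty)$:
\[
\|\DD\mathcal N_f(u_1)-\DD\mathcal N_f(u_2)\|\le\varpi(\|u_1-u_2\|_\infty).
\]

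\emph{Step 2: higher derivatives.} For $2\le j\le k$, I would argue by induction. The map $u\mapsto\DD^{j-1}\mathcal N_f(u)$ is the Nemytskii operator of $\DD^{j-1}f\in C_b^{k-j+1}$, acting into $L^\infty(\Omega;\mathcal B^{j-1}(\R^m,\R^\ell))$. That space embeds isometrically into $\mathcal B^{j-1}(L^\infty,L^\infty)$ by pointwise multilinear evaluation. Repeating Step~1 with $\DD^{j-1}f$ in place of $f$ then gives the formula for $\DD^j\mathcal N_f$ and its continuity.

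\emph{Step 3: composition with bounded linear maps.} Precomposition with a bounded linear map (such as the pullback $\theta\mapsto(\theta(x),\theta(y))$) and postcomposition with one (such as the averaging map $J$) preserve $C^k$ by the chain rule. The norm bounds are multiplied by the norms of the linear maps, and both the pullback and $J$ have norm at most $1$.

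\emph{Step 4: uniform Taylor remainder.} This is the main obstacle, because $E$ is infinite-dimensional and so $\DD\Phi$ is not automatically uniformly continuous near a bounded set. The set $K:=z([-T,0])$, however, is compact as the continuous image of a compact interval. I would therefore first establish a uniform-continuity claim: for every $\eta>0$ there is $\delta>0$ such that
\[
\|\DD\Phi(e+h)-\DD\Phi(e)\|\le\eta\quad\text{for all } e\in K,\ \|h\|\le\delta.
\]
To prove this, cover $K$ by finitely many balls $B(e_i,\delta_i/2)$, where $\delta_i$ witnesses continuity of $\DD\Phi$ at $e_i$ with tolerance $\eta/2$, and take $\delta=\min_i\delta_i/2$. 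With this claim, the mean-value form of the remainder,
\[
\int_0^1\bigl(\DD\Phi(z(s)+\tau w(s))-\DD\Phi(z(s))\bigr)w(s)\,\dd\tau,
\]
is bounded by $\eta\|w\|_{C([-T,0];E)}$ for every $s\in[-T,0]$ once $\|w\|_{C([-T,0];E)}\le\delta$. This yields the uniform convergence of the Taylor quotient to zero.
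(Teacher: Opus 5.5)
Your proposal is correct and follows the same route as the paper's proof. The paper likewise uses a pointwise Taylor expansion with uniform continuity of the derivatives of $f$ on a slightly enlarged compact ball containing the essential range, iteration to higher derivatives, the chain rule for bounded linear pre- and postcomposition, and a finite-cover argument on the compact set $z([-T,0])$ for the uniform Taylor remainder; you simply supply the details the paper leaves implicit, namely the induction via the embedding of $L^\infty(\Omega;\mathcal B^{j-1}(\R^m,\R^\ell))$ into $\mathcal B^{j-1}(L^\infty,L^\infty)$ and the Lebesgue-number choice $\delta=\min_i\delta_i/2$.
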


\begin{proof}
For \(u\in L^\infty(\Omega;\R^m)\), the essential range of \(u\) lies in a compact ball of
\(\R^m\). The pointwise Taylor formula for \(f\), together with the uniform continuity of the
relevant derivatives on a slightly larger compact ball, gives the Frechet derivative in
\(L^\infty\) after taking essential suprema. Iterating the argument gives
\eqref{eq:sec4-nemytskii-derivatives}; the displayed operator bound follows immediately from
the essential-supremum norm. Bounded linear pre- and postcomposition preserve \(C^k\)-regularity.
For the finite-window assertion, the set \(z([-T,0])\) is compact in \(E\). The continuity of
\(\DD\Phi\), a finite-cover argument on this compact set, and the smallness of
\(\|w\|_{C([-T,0];E)}\) give the uniform Taylor remainder.
\end{proof}

\begin{lemma}[Regularity of the truncated nonlinearities]
\label{lem:sec4-Ar-Br-C1}
The maps \(A_r:X_a\times X_\theta\to X_a\) and
\(B_r:X_a\times X_\theta\to X_\theta\) are \(C^1\). Moreover, for all
\((\xi,\theta)\in X_a\times X_\theta\),
\[
\|\DD A_r(\xi,\theta)\|_{\mathcal B(X_a\times X_\theta,X_a)}\le C_3(r),
\qquad
\|\DD B_r(\xi,\theta)\|_{\mathcal B(X_a\times X_\theta,X_\theta)}\le C_4(r),
\]
where \(X_a\times X_\theta\) is equipped with the sum norm.
\end{lemma}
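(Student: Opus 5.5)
We would prove Lemma~\ref{lem:sec4-Ar-Br-C1} by writing \(A_r\) and \(B_r\) as compositions of maps already covered by Lemma~\ref{lem:sec4-nemytskii-calculus}, and then reading off the derivative bounds from the Lipschitz estimates recorded before Lemma~\ref{lem:lyapunov_perron}.

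\emph{Regularity of \(B_r\).} We factor
\[
B_r(\xi,\theta)=\omega+J\bigl[\mathsf N_r(h_0(\theta)+\xi)\cdot\mathcal N_\Gamma(\pi_2\theta-\pi_1\theta)\bigr].
\]
Here \(\pi_1\theta(x,y)=\theta(x)\) and \(\pi_2\theta(x,y)=\theta(y)\) are the bounded linear pullbacks, and \(J\) is the averaging map. The ingredients are regular as follows:
\begin{itemize}
\item \(h_0=\mathcal N_H\circ(\pi_1,\pi_2)\) is \(C^2\), since \(H\in C_b^2\);
\item \(\mathsf N_r\) is \(C^\infty\), since \(\sigma_r\in C_b^\infty\);
\item \(\theta\mapsto\Gamma(\theta(y)-\theta(x))\) is \(C^1\), since \(\Gamma\in C_b^1\);
\item pointwise multiplication \(L^\infty(\I^2)\times L^\infty(\I^2)\to L^\infty(\I^2)\) is a bounded bilinear map, hence smooth.
\end{itemize}
The chain rule then gives \(B_r\in C^1\), with the explicit derivative
\[
\DD B_r(\xi,\theta)[\zeta,\phi](x)=\int_0^1\Bigl(\sigma_r'(a)\bigl(\zeta+\DD h_0(\theta)\phi\bigr)\Gamma(\cdot)+\sigma_r(a)\Gamma'(\cdot)\bigl(\phi(y)-\phi(x)\bigr)\Bigr)\dd y,
\]
where \(a=h_0(\theta)+\xi\). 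Bounding each piece with \(|\sigma_r'|\le1\), \(|\sigma_r|\le2r\) and \(\|\DD h_0\|\le2\Mdh\) yields
\[
\|\DD B_r\|\le M_\Gamma\|\zeta\|+(2M_\Gamma\Mdh+4rL_\Gamma)\|\phi\|\le C_4(r)\bigl(\|\zeta\|+\|\phi\|\bigr).
\]

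\emph{Regularity of \(A_r\).} We write \(A_r(\xi,\theta)=\mathcal E(\DD h_0(\theta),B_r(\xi,\theta))\), where \(\mathcal E:\mathcal B(X_\theta,X_a)\times X_\theta\to X_a\) is the bounded bilinear evaluation map. So the remaining task is to show that \(\theta\mapsto\DD h_0(\theta)\in\mathcal B(X_\theta,X_a)\) is \(C^1\).

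\textbf{Main obstacle.} This is where the difficulty lies. \(\DD h_0(\theta)\phi\) involves \(\partial_qH(\theta(x),\theta(y))\) multiplied by \(\phi(x)\) or \(\phi(y)\). Its derivative in \(\theta\) needs \(\partial^2 H\) to be uniformly continuous on the relevant compact range, in order to control the operator-norm Taylor remainder uniformly over \(\|\phi\|\le1\). With only \(H\in C_b^2\) one gets differentiability with derivative \(\partial^2_{q\ell}H\,\phi\,\psi\), provided the Nemytskii lemma is applied to \(\partial_qH\in C_b^1\); this exactly yields \(C^1\) as an \(L^\infty(\I^2)\)-valued multiplier.

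We then identify multiplier operators with \(L^\infty(\I^2)\) functions acting by multiplication and pullback. This identification is isometric up to the factor \(2\) from the two summands, so \(C^1\) of \(\theta\mapsto\partial_qH(\theta(x),\theta(y))\) in \(L^\infty(\I^2)\) transfers to \(C^1\) of \(\DD h_0\) in operator norm.

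\emph{Bound for \(\DD A_r\).} The product rule gives
\[
\DD A_r[\zeta,\phi]=\DD^2h_0(\theta)[\phi,B_r]+\DD h_0(\theta)\,\DD B_r[\zeta,\phi].
\]
Using \(\|\DD^2h_0\|\le4\MdTwoH\) (the constant appearing in the Lipschitz bound for \(\DD h_0\)), \(\|B_r\|\le C_2(r)\), and the bound on \(\DD B_r\) above, we obtain
\[
\|\DD A_r\|\le 2\Mdh M_\Gamma+4\MdTwoH C_2(r)+2\Mdh\bigl(2M_\Gamma\Mdh+4rL_\Gamma\bigr)=C_3(r),
\]
which matches \eqref{eq:sec4-Cconstants}. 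Finally, continuity of \(\DD A_r\) and \(\DD B_r\) follows from continuity of each factor in the product and chain rules.
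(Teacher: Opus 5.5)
Your proposal is correct and follows essentially the same route as the paper: $C^1$ regularity from the Nemytskii calculus together with the cut-off $\sigma_r$ and the averaging map $J$, the same explicit formula for $\DD B_r$ bounded using $|\sigma_r'|\le1$ and $|\sigma_r|\le2r$, and the product rule for $A_r=\DD h_0(\theta)B_r$ with $\|\DD^2h_0\|\le4\MdTwoH$ and $\|B_r\|\le C_2(r)$. The step you flag as the main obstacle is not a real difficulty: $h_0\in C^2(X_\theta,X_a)$ already follows from the Nemytskii calculus with $H\in C_b^2$, and this by definition makes $\theta\mapsto\DD h_0(\theta)\in\mathcal B(X_\theta,X_a)$ a $C^1$ map.
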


\begin{proof}
The \(C^1\)-regularity follows from Lemma~\ref{lem:sec4-nemytskii-calculus}, the smoothness of
the cut-off \(\sigma_r\), the bounded periodic coefficient hypotheses, and boundedness of the
averaging operator \(J:L^\infty(\I^2)\to L^\infty(\I)\). To make the estimate explicit, write
\[
\rho(x,y):=H(\theta(x),\theta(y))+\xi(x,y),
\qquad
\Delta(x,y):=\theta(y)-\theta(x).
\]
For \((u,v)\in X_a\times X_\theta\),
\[
\delta\rho_{u,v}(x,y):=
u(x,y)+\partial_1H(\theta(x),\theta(y))v(x)
+\partial_2H(\theta(x),\theta(y))v(y).
\]
\begin{align}
\bigl(\DD B_r(\xi,\theta)[u,v]\bigr)(x)
&=
\int_0^1
\sigma_r'(\rho(x,y))\delta\rho_{u,v}(x,y)\Gamma(\Delta(x,y))\,\dd y
\nonumber\\
&\quad+
\int_0^1
\sigma_r(\rho(x,y))\Gamma'(\Delta(x,y))(v(y)-v(x))\,\dd y .
\label{eq:sec4-DBr-formula}
\end{align}
Using \(|\sigma_r'|\le1\), \(|\sigma_r|\le2r\), and the bounds on \(H\) and \(\Gamma\), this gives
\[
\|\DD B_r(\xi,\theta)[u,v]\|_{X_\theta}
\le
M_\Gamma\|u\|_{X_a}
+\bigl(2M_\Gamma\Mdh+4rL_\Gamma\bigr)\|v\|_{X_\theta}
\le C_4(r)\|(u,v)\|.
\]
Since \(A_r(\xi,\theta)=\DD h_0(\theta)B_r(\xi,\theta)\), the product rule gives
\begin{equation}\label{eq:sec4-DAr-formula}
\DD A_r(\xi,\theta)[u,v]
=
\DD^2h_0(\theta)[v,B_r(\xi,\theta)]
+
\DD h_0(\theta)\DD B_r(\xi,\theta)[u,v].
\end{equation}
Together with
\[
\|B_r(\xi,\theta)\|_{X_\theta}\le C_2(r),\qquad
\|\DD h_0(\theta)\|\le2\Mdh,
\]
and
\[
\|\DD^2h_0(\theta)[v,\phi]\|_{X_a}
\le4\MdTwoH\|v\|_{X_\theta}\|\phi\|_{X_\theta},
\]
this yields the stated \(C_3(r)\)-bound.
\end{proof}

\begin{lemma}[Anchor and variational Lyapunov--Perron estimates]
\label{lem:sec4-variational-lp}
Let \((\xi_{\eps,r}^{\theta_0},\theta_{\eps,r}^{\theta_0})\) be the fixed point of
\(\mathcal L_{\eps,\theta_0,r}\), and write
\[
z^{\theta_0}(t):=(\xi_{\eps,r}^{\theta_0}(t),\theta_{\eps,r}^{\theta_0}(t)).
\]
Then, for all \(\theta_0,\widetilde\theta_0\in X_\theta\),
\begin{equation}\label{eq:sec4-anchor-lip}
\|z^{\theta_0}-z^{\widetilde\theta_0}\|_{C_{\eta_r,X_a}\times C_{\eta_r,X_\theta}}
\le
\frac{1}{1-q_*}\|\theta_0-\widetilde\theta_0\|_{X_\theta}.
\end{equation}
Moreover, for every \(\phi\in X_\theta\), the linear variational Lyapunov--Perron system
\begin{align}
U(t)
&=
-\int_{-\infty}^{t} e^{-\eps^{-1}(t-s)}
\DD A_r(z^{\theta_0}(s))[U(s),V(s)]\,\dd s,
\label{eq:sec4-var-U}\\
V(t)
&=
\phi+\int_0^t
\DD B_r(z^{\theta_0}(s))[U(s),V(s)]\,\dd s
\label{eq:sec4-var-V}
\end{align}
has a unique solution
\((U_{\theta_0}^{\phi},V_{\theta_0}^{\phi})
\in C_{\eta_r,X_a}\times C_{\eta_r,X_\theta}\). The map
\(\phi\mapsto(U_{\theta_0}^{\phi},V_{\theta_0}^{\phi})\) is linear and bounded, and
\begin{equation}\label{eq:sec4-variational-bound}
\|(U_{\theta_0}^{\phi},V_{\theta_0}^{\phi})\|_{C_{\eta_r,X_a}\times C_{\eta_r,X_\theta}}
\le
\frac{1}{1-q_*}\|\phi\|_{X_\theta}.
\end{equation}
\end{lemma}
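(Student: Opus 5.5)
The argument is a repetition of the contraction argument of Lemma~\ref{lem:lyapunov_perron}, once for the anchor-dependence and once for the linearised problem.

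\emph{Anchor estimate~\eqref{eq:sec4-anchor-lip}.} Write the fixed-point identities $z^{\theta_0}=\mathcal L_{\eps,\theta_0,r}(z^{\theta_0})$ and $z^{\widetilde\theta_0}=\mathcal L_{\eps,\widetilde\theta_0,r}(z^{\widetilde\theta_0})$ and subtract them. The anchor enters only the second component, and only through the constant $\theta_0$. Its contribution to $\|\cdot\|_{C_{\eta_r,X_\theta}}$ is therefore
\[
\sup_{t\le0}e^{-\eta_r t}\|\theta_0-\widetilde\theta_0\|_{X_\theta}=\|\theta_0-\widetilde\theta_0\|_{X_\theta},
\]
since $e^{-\eta_r t}\le 1$ for $t\le0$. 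The remaining terms are the $\mathcal L$-differences at a common anchor. These are bounded by $q_{\eps,\eta_r,r}\le q_*$ times the $\mathcal X_{\eta_r}$-distance, by the difference estimates in the proof of Lemma~\ref{lem:lyapunov_perron} (which use the constants $C_3(r),C_4(r)$). Rearranging gives $(1-q_*)\|z^{\theta_0}-z^{\widetilde\theta_0}\|\le\|\theta_0-\widetilde\theta_0\|$, which is \eqref{eq:sec4-anchor-lip}. This is exactly the computation already displayed in \eqref{eq:fp-lip}.

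\emph{Variational system.} Define the affine operator $\mathcal T_\phi(U,V)$ by the right-hand sides of \eqref{eq:sec4-var-U}--\eqref{eq:sec4-var-V}. By Lemma~\ref{lem:sec4-Ar-Br-C1} we have the uniform bounds $\|\DD A_r(z)\|\le C_3(r)$ and $\|\DD B_r(z)\|\le C_4(r)$, valid along every point of the orbit. I will need to check that $s\mapsto \DD A_r(z^{\theta_0}(s))[U(s),V(s)]$ is continuous, so that $\mathcal T_\phi$ maps continuous curves to continuous curves. This follows from $C^1$-regularity of $A_r$ and $B_r$ together with continuity of $z^{\theta_0}$ and of $(U,V)$: the map $(z,w)\mapsto \DD A_r(z)w$ is jointly continuous.

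With these bounds, the same weighted integral identities as in Lemma~\ref{lem:lyapunov_perron},
\[
e^{-\eta_r t}\int_{-\infty}^t e^{-(t-s)/\eps}e^{\eta_r s}\,\dd s=\frac{\eps}{1+\eps\eta_r},
\qquad
e^{-\eta_r t}\int_t^0e^{\eta_r s}\,\dd s\le\frac1{|\eta_r|},
\]
show two things. First, $\mathcal T_\phi$ maps $\mathcal X_{\eta_r}$ into itself. Second, it is a contraction with constant $C_3(r)\eps/(1+\eps\eta_r)+C_4(r)/|\eta_r|\le q_*$, because $\mathcal T_\phi$ differs from its linear part only by the constant $\phi$. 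Banach's fixed-point theorem then gives existence and uniqueness of $(U^\phi_{\theta_0},V^\phi_{\theta_0})$.

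\emph{Linearity and the bound.} Linearity in $\phi$ follows from uniqueness: the sum $\alpha(U^{\phi_1},V^{\phi_1})+\beta(U^{\phi_2},V^{\phi_2})$ solves the system with datum $\alpha\phi_1+\beta\phi_2$, so it is the solution for that datum. For the bound, apply the fixed-point identity to the solution itself:
\[
\|(U,V)\|\le\|\phi\|_{X_\theta}+q_*\|(U,V)\|.
\]
Rearranging gives \eqref{eq:sec4-variational-bound}, and boundedness of the linear map $\phi\mapsto(U^\phi_{\theta_0},V^\phi_{\theta_0})$ follows from the same inequality.

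The only mildly delicate point is measurability and continuity of the integrands, needed for the improper fast integral to define an element of $C((-\infty,0];X_a)$. I will handle this by dominated convergence, using the weighted bound together with $1+\eps\eta_r\ge\tfrac12$.
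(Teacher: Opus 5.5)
Your proposal is correct and follows the paper's proof essentially step for step. The anchor estimate is the contraction computation \eqref{eq:fp-lip}, with the anchor entering only as the constant in the second component. The variational system is solved by Banach's fixed-point theorem using the bounds $C_3(r)$ and $C_4(r)$ from Lemma~\ref{lem:sec4-Ar-Br-C1}; linearity follows from uniqueness, and \eqref{eq:sec4-variational-bound} from rearranging $\|(U,V)\|\le\|\phi\|_{X_\theta}+q_*\|(U,V)\|$. Your remarks on continuity of the integrands, via $C^1$-regularity of $A_r$ and $B_r$, fill in details that the paper leaves implicit.
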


\begin{proof}
The estimate \eqref{eq:sec4-anchor-lip} is the contraction estimate for two fixed points with
different anchors: the second component of \(\mathcal L_{\eps,\theta_0,r}\) changes by exactly
\(\theta_0-\widetilde\theta_0\), while the remaining terms are \(q_*\)-Lipschitz. For
\eqref{eq:sec4-var-U}--\eqref{eq:sec4-var-V}, the linear part has the same contraction constant
because Lemma~\ref{lem:sec4-Ar-Br-C1} gives the bounds \(C_3(r)\) and \(C_4(r)\) for
\(\DD A_r\) and \(\DD B_r\). The affine term is only the anchor direction \(\phi\) in the
second component. Banach's fixed-point theorem therefore gives existence, uniqueness, linearity in
\(\phi\), and \eqref{eq:sec4-variational-bound}.
\end{proof}

\begin{lemma}[Finite-window Taylor remainders]
\label{lem:sec4-finite-window-remainders}
Fix \(T>0\), \(\theta_0\in X_\theta\), and \(0<\eps\le\eps_{0,r}\). For
\(\psi\in X_\theta\), set \(z^\psi:=z^{\theta_0+\psi}\), \(z^0:=z^{\theta_0}\), and define
\[
\mathcal E_A^\psi(s)
:=
A_r(z^\psi(s))-A_r(z^0(s))
-\DD A_r(z^0(s))[z^\psi(s)-z^0(s)],
\]
with \(\mathcal E_B^\psi\) defined analogously using \(B_r\). Then
\[
\sup_{-T\le s\le0}
\frac{
\|\mathcal E_A^\psi(s)\|_{X_a}+\|\mathcal E_B^\psi(s)\|_{X_\theta}
}{
\|\psi\|_{X_\theta}
}
\longrightarrow0
\qquad\text{as }\|\psi\|_{X_\theta}\to0 .
\]
\end{lemma}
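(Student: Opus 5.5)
The plan is to reduce the claim to the finite-window Taylor statement of Lemma~\ref{lem:sec4-nemytskii-calculus}, applied with \(E=X_a\times X_\theta\), \(\Phi=A_r\) and \(\Phi=B_r\), and \(z=z^0|_{[-T,0]}\). Two inputs are needed. The first is that \(A_r\) and \(B_r\) are \(C^1\) on \(X_a\times X_\theta\), which is Lemma~\ref{lem:sec4-Ar-Br-C1}. The second is that \(z^0\) restricted to \([-T,0]\) is continuous; this holds because fixed points lie in \(\mathcal X_{\eta_r}\subset C((-\infty,0];X_a)\times C((-\infty,0];X_\theta)\). It remains to control the perturbation \(w:=z^\psi-z^0\) on the window in the sup norm, in terms of \(\|\psi\|_{X_\theta}\).

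For this I use the anchor Lipschitz estimate \eqref{eq:sec4-anchor-lip} of Lemma~\ref{lem:sec4-variational-lp}. Since \(e^{-\eta_r s}\ge e^{\eta_r T}\) for \(s\in[-T,0]\) (recall \(\eta_r<0\)), the weighted norm dominates the window sup norm up to the factor \(e^{|\eta_r|T}\). This gives
\[
\|w\|_{C([-T,0];X_a\times X_\theta)}\le \frac{e^{|\eta_r|T}}{1-q_*}\,\|\psi\|_{X_\theta},
\]
so \(\|w\|_{C([-T,0])}\to0\) as \(\|\psi\|_{X_\theta}\to0\).

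Lemma~\ref{lem:sec4-nemytskii-calculus} then yields
\[
\sup_{s\in[-T,0]}\frac{\|\mathcal E_A^\psi(s)\|_{X_a}}{\|w\|_{C([-T,0])}}\to0,
\]
and the same for \(\mathcal E_B^\psi\). If \(w\equiv0\) on the window, the remainders vanish and there is nothing to prove. Otherwise I multiply by the ratio \(\|w\|_{C([-T,0])}/\|\psi\|_{X_\theta}\), which is bounded by \(e^{|\eta_r|T}/(1-q_*)\), and this gives the claim.

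The main obstacle is the uniform-in-\(s\) Taylor remainder, because \(X_a=L^\infty\) is not locally compact. The finite-window assertion of Lemma~\ref{lem:sec4-nemytskii-calculus} handles this through compactness of the curve image \(z^0([-T,0])\) and continuity of \(\DD A_r\) and \(\DD B_r\). If I were to prove it directly, I would write each remainder as \(\int_0^1(\DD\Phi(z^0(s)+\tau w(s))-\DD\Phi(z^0(s)))w(s)\,\dd\tau\). I would then use uniform continuity of \(\DD\Phi\) near the compact set \(z^0([-T,0])\): cover it by finitely many balls on which \(\DD\Phi\) oscillates by less than \(\delta\), and take \(\|w\|\) smaller than the Lebesgue number of that cover.
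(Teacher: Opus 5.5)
Your proposal is correct and follows essentially the same route as the paper: the anchor estimate \eqref{eq:sec4-anchor-lip} gives \(\sup_{-T\le s\le 0}\|z^\psi(s)-z^0(s)\|\le e^{-\eta_r T}(1-q_*)^{-1}\|\psi\|_{X_\theta}\) (your \(e^{|\eta_r|T}\) is the same factor). The finite-window part of Lemma~\ref{lem:sec4-nemytskii-calculus}, applied to the \(C^1\) maps \(A_r\) and \(B_r\), then converts \(o(\|z^\psi-z^0\|_{C([-T,0])})\) into \(o(\|\psi\|_{X_\theta})\).
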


\begin{proof}
By \eqref{eq:sec4-anchor-lip},
\[
\sup_{-T\le s\le0}\|z^\psi(s)-z^0(s)\|_{X_a\times X_\theta}
\le
\frac{e^{-\eta_rT}}{1-q_*}\|\psi\|_{X_\theta}.
\]
Thus \(z^\psi\to z^0\) uniformly on every fixed finite window. Applying the finite-window part of
Lemma~\ref{lem:sec4-nemytskii-calculus} to the \(C^1\)-maps \(A_r\) and \(B_r\) gives Taylor
remainders which are \(o(\|z^\psi-z^0\|_{C([-T,0])})\), and the displayed estimate follows from
the preceding Lipschitz bound.
\end{proof}

\begin{lemma}[Fast-kernel tail estimate]\label{lem:sec4-tail-evaluation}
There exists \(K_r>0\), independent of
\(\theta_0,\psi,\eps\in(0,\eps_{0,r}]\), and \(T>0\), such that, with
\((U_{\theta_0}^{\psi},V_{\theta_0}^{\psi})\) as in
Lemma~\ref{lem:sec4-variational-lp},
\[
\left\|
\int_{-\infty}^{-T} e^{s/\eps}
\Bigl(
A_r(z^\psi(s))-A_r(z^0(s))
-\DD A_r(z^0(s))[U_{\theta_0}^{\psi}(s),V_{\theta_0}^{\psi}(s)]
\Bigr)\,\dd s
\right\|_{X_a}
\le
K_r\|\psi\|_{X_\theta}e^{-(\eps^{-1}+\eta_r)T}.
\]
\end{lemma}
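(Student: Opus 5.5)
The estimate is a pointwise-in-time bound on the integrand, integrated against the fast kernel. The parameter \(\psi\) enters only linearly through the anchor, and the integration is over \(s\le -T\), far from the anchor time. Crude weighted-norm bounds therefore suffice, and no Taylor smallness is needed. I will bound the integrand by a constant times \(\|\psi\|_{X_\theta}e^{\eta_r s}\) and then integrate the exponential.

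\emph{Step 1: the difference \(A_r(z^\psi(s))-A_r(z^0(s))\).} By Lemma~\ref{lem:sec4-Ar-Br-C1}, \(A_r\) is globally \(C_3(r)\)-Lipschitz on \(X_a\times X_\theta\) with the sum norm (mean-value inequality with the derivative bound). The anchor estimate \eqref{eq:sec4-anchor-lip} gives \(\|z^\psi(s)-z^0(s)\|\le e^{\eta_r s}(1-q_*)^{-1}\|\psi\|_{X_\theta}\) for every \(s\le0\), since the weighted norm controls \(e^{-\eta_r s}\|\cdot\|\). Hence
\[
\|A_r(z^\psi(s))-A_r(z^0(s))\|_{X_a}\le \frac{C_3(r)}{1-q_*}\,e^{\eta_r s}\|\psi\|_{X_\theta}.
\]

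\emph{Step 2: the variational term.} Again by Lemma~\ref{lem:sec4-Ar-Br-C1}, \(\|\DD A_r(z^0(s))\|\le C_3(r)\). By \eqref{eq:sec4-variational-bound}, \(\|U^\psi_{\theta_0}(s)\|+\|V^\psi_{\theta_0}(s)\|\le e^{\eta_r s}(1-q_*)^{-1}\|\psi\|_{X_\theta}\). The same bound as in Step 1 follows, so the full integrand is bounded in \(X_a\) by \(2C_3(r)(1-q_*)^{-1}e^{\eta_r s}\|\psi\|_{X_\theta}\).

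\emph{Step 3: integration.} The Bochner integral is estimated by the integral of the norm:
\[
\int_{-\infty}^{-T}e^{s/\eps}e^{\eta_r s}\,\dd s=\frac{e^{-(\eps^{-1}+\eta_r)T}}{\eps^{-1}+\eta_r}=\frac{\eps\,e^{-(\eps^{-1}+\eta_r)T}}{1+\eps\eta_r}\le 2\eps_{0,r}\,e^{-(\eps^{-1}+\eta_r)T}.
\]
Convergence holds because \(1+\eps\eta_r\ge\tfrac12\). The last inequality also uses \(\eps\le\eps_{0,r}\). This yields the claim with
\[
K_r:=\frac{4C_3(r)\eps_{0,r}}{1-q_*},
\]
which is independent of \(\theta_0\), \(\psi\), \(\eps\) and \(T\).

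The only point needing care is measurability and Bochner integrability of the integrand on \((-\infty,-T]\). Continuity in \(s\) follows from the continuity of \(z^\psi\), \(z^0\), \(U\) and \(V\) together with the \(C^1\)-regularity of \(A_r\). The exponential domination from Steps 1–2 then gives absolute integrability. The essential structural input is the uniform weighted bounds \eqref{eq:sec4-anchor-lip}–\eqref{eq:sec4-variational-bound} with the \(\eps\)-independent constant \(q_*\). Without them, \(K_r\) would not be uniform in \(\eps\).
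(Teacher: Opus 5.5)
Your proposal is correct and matches the paper's proof step for step: you derive the same pointwise bound $2C_3(r)(1-q_*)^{-1}\|\psi\|_{X_\theta}e^{\eta_r s}$ from the global derivative bound on $A_r$ together with \eqref{eq:sec4-anchor-lip}--\eqref{eq:sec4-variational-bound}, integrate the exponential in the same way using $1+\eps\eta_r\ge 1/2$, and arrive at the same constant $K_r=4C_3(r)\eps_{0,r}/(1-q_*)$. Your remark on continuity and Bochner integrability of the integrand is a harmless extra detail that the paper leaves implicit.
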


\begin{proof}
The Lipschitz bound for \(A_r\), the derivative bound for \(A_r\), and
\eqref{eq:sec4-anchor-lip}--\eqref{eq:sec4-variational-bound} give, for \(s\le0\),
\[
\begin{aligned}
&\|A_r(z^\psi(s))-A_r(z^0(s))
-\DD A_r(z^0(s))[U_{\theta_0}^{\psi}(s),V_{\theta_0}^{\psi}(s)]\|_{X_a}\\
&\qquad\le
\frac{2C_3(r)}{1-q_*}\|\psi\|_{X_\theta}e^{\eta_r s}.
\end{aligned}
\]
Since \(1+\eps\eta_r\ge1/2\),
\[
\int_{-\infty}^{-T} e^{(\eps^{-1}+\eta_r)s}\,\dd s
=
\frac{\eps}{1+\eps\eta_r}e^{-(\eps^{-1}+\eta_r)T}
\le
2\eps_{0,r}e^{-(\eps^{-1}+\eta_r)T}.
\]
The result follows, for instance with
\(K_r=4C_3(r)\eps_{0,r}/(1-q_*)\).
\end{proof}

\begin{lemma}[Differentiability of the evaluated graph]
\label{lem:sec4-evaluation-C1}
For fixed \(0<\eps\le\eps_{0,r}\), the map
\[
\Theta_\eps:X_\theta\to X_a,
\qquad
\Theta_\eps(\theta_0):=\xi_{\eps,r}^{\theta_0}(0),
\]
is \(C^1\). Its derivative is
\[
\DD\Theta_\eps(\theta_0)\phi=U_{\theta_0}^{\phi}(0),
\]
where \((U_{\theta_0}^{\phi},V_{\theta_0}^{\phi})\) is the solution of the variational
Lyapunov--Perron system \eqref{eq:sec4-var-U}--\eqref{eq:sec4-var-V}.
\end{lemma}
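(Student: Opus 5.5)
The plan is the standard differentiate-the-fixed-point argument, using the appendix lemmas already in place. Fix \(\theta_0,\psi\in X_\theta\) and write \(z^\psi=z^{\theta_0+\psi}\), \(z^0=z^{\theta_0}\), and \((U,V)=(U^\psi_{\theta_0},V^\psi_{\theta_0})\). Set
\[
W(t):=z^\psi(t)-z^0(t)-(U(t),V(t)),\qquad t\le0 .
\]
The goal is \(\|W(0)\|_{X_a}=o(\|\psi\|_{X_\theta})\); since \(\Theta_\eps(\theta_0)=\xi^{\theta_0}_{\eps,r}(0)\), this is Fr\'echet differentiability with \(\DD\Theta_\eps(\theta_0)\psi=U^\psi_{\theta_0}(0)\), and the right-hand side is linear and bounded in \(\psi\) by Lemma~\ref{lem:sec4-variational-lp}.

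\emph{Step 1: equation for \(W\).} Subtract the two fixed-point identities and the variational system \eqref{eq:sec4-var-U}--\eqref{eq:sec4-var-V}. The anchor terms \(\psi\) cancel, and \(W\) satisfies
\[
W=\mathcal{T}W+\mathcal{E},
\]
where \(\mathcal{T}\) is the linear variational LP operator (first component \(-\int_{-\infty}^t e^{-(t-s)/\eps}\DD A_r(z^0(s))[\cdot]\,\dd s\), second component \(\int_0^t\DD B_r(z^0(s))[\cdot]\,\dd s\)). The forcing \(\mathcal{E}\) has components \(-\int_{-\infty}^t e^{-(t-s)/\eps}\mathcal E_A^\psi\,\dd s\) and \(\int_0^t\mathcal E_B^\psi\,\dd s\). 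By Lemma~\ref{lem:sec4-Ar-Br-C1}, \(\mathcal{T}\) has norm at most \(q_*<1\) on \(\mathcal X_{\eta_r}\), so
\[
\|W\|_{\mathcal X_{\eta_r}}\le(1-q_*)^{-1}\|\mathcal{E}\|_{\mathcal X_{\eta_r}}.
\]
The difficulty is that the Taylor remainders are only small uniformly on finite windows (Lemma~\ref{lem:sec4-finite-window-remainders}), not on the whole half-line.

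\emph{Step 2: split at \(-T\).} This is the main obstacle, and I would handle it with an \(\epsilon/3\) argument exploiting the weight. Given \(\delta>0\), first choose \(T\) large so that the tail contribution is at most \(\delta\|\psi\|\) in the weighted norm. For \(t\le -T\), the crude bound \(\|\mathcal E^\psi(s)\|\le 2C\|\psi\|(1-q_*)^{-1}e^{\eta_r s}\) (as in the proof of Lemma~\ref{lem:sec4-tail-evaluation}) is not small after weighting. Rather than control \(\mathcal{E}\) globally, I would therefore estimate \(W\) directly in a weaker weight \(\eta'\in(\eta_r,0)\) for which the contraction still holds. Alternatively, evaluate only at \(t=0\) and close the estimate on the window \([-T,0]\): the fast part of \(W(0)\) is controlled by its window integral plus the tail of Lemma~\ref{lem:sec4-tail-evaluation}, which is \(K_r\|\psi\|e^{-(\eps^{-1}+\eta_r)T}\). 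The slow part \(\int_0^t\mathcal E_B\) over \([t,0]\subset[-T,0]\) is \(o(\|\psi\|)\) by Lemma~\ref{lem:sec4-finite-window-remainders}. The feedback through \(\mathcal{T}W\) on \([-T,0]\) I would bound by the window supremum of \(W\), which in turn is controlled by the global \(\mathcal X_{\eta_r}\) contraction bound applied with a slightly shifted weight \(\eta'\). Choosing \(T\) first and then \(\|\psi\|\) small gives \(\|W(0)\|\le C\delta\|\psi\|\). I expect the careful bookkeeping of the two weights to be the delicate point.

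\emph{Step 3: continuity of \(\theta_0\mapsto\DD\Theta_\eps(\theta_0)\).} Compare the variational systems at \(\theta_0\) and \(\widetilde\theta_0\). Their difference solves the same contraction equation, with forcing \((\DD A_r(z^{\theta_0})-\DD A_r(z^{\widetilde\theta_0}))[U,V]\) and the analogous \(B_r\) term. On finite windows this forcing tends to zero, uniformly in \(\|\phi\|\le1\), by continuity of \(\DD A_r,\DD B_r\) and \eqref{eq:sec4-anchor-lip}. On the tail it is bounded by \(2C_3(r)(1-q_*)^{-1}e^{\eta_r s}\), which is damped by \(e^{s/\eps}\) exactly as in Lemma~\ref{lem:sec4-tail-evaluation}. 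The same window/tail splitting at \(t=0\) then yields operator-norm convergence \(\DD\Theta_\eps(\widetilde\theta_0)\to\DD\Theta_\eps(\theta_0)\), so \(\Theta_\eps\) is \(C^1\).
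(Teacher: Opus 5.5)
Your Step 1 is correct, and your overall structure matches the paper's: subtract the fixed-point and variational equations, use Taylor remainders on a window and a crude bound on the tail, and send $\|\psi\|\to0$ before $T\to\infty$. But Step 2, where the proof actually lives, rests on a weight shift in the wrong direction.

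For $\eta<0$ the norm $\sup_{t\le0}e^{-\eta t}\|w(t)\|$ becomes \emph{weaker} as $\eta$ decreases, because more backward growth is allowed. So $\eta'\in(\eta_r,0)$ gives a \emph{stronger} norm than that of $\mathcal X_{\eta_r}$. Measured there, the crude tail bound $\|\mathcal E^\psi(s)\|\lesssim\|\psi\|_{X_\theta}e^{\eta_r s}$ becomes $e^{-\eta' s}\|\mathcal E^\psi(s)\|\lesssim\|\psi\|_{X_\theta}e^{(\eta_r-\eta')s}$, which is unbounded as $s\to-\infty$. The tail gets worse, not small.

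Your ``alternatively'' is the paper's own route: a window estimate plus Lemma~\ref{lem:sec4-tail-evaluation} at $t=0$. As you describe it, the feedback term does not close. The global $\mathcal X_{\eta_r}$ contraction only gives $\sup_{[-T,0]}e^{-\eta_r t}\|W(t)\|=O(\|\psi\|)$. The reason is that the fast tail forcing $-\int_{-\infty}^{-T}e^{-(t-s)/\eps}(\cdots)\,\dd s$, evaluated at $t=-T$, has an $\eta_r$-weighted bound of order $\eps\|\psi\|$ that does not improve with $T$. What you need is that the part of $W$ driven by this tail is concentrated near $t=-T$, and a single weight $\eta_r$ cannot see that. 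The paper's displayed bound for $R_1^\psi(0)$ compresses exactly this point into its omitted ``routine bookkeeping''. Your Step 3 reuses ``the same window/tail splitting'', so it inherits the problem.

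The repair is to shift the weight the other way. At fixed $\eps\in(0,\eps_{0,r}]$, choose $\eta''<\eta_r$ with $1+\eps\eta''>0$ and $q_{\eps,\eta'',r}<1$. This is possible by continuity in $\eta$, since $q_{\eps,\eta_r,r}\le q_*<1$. Then $W\in\mathcal X_{\eta_r}\subset\mathcal X_{\eta''}$, and your operator $\mathcal T$ is a $q_{\eps,\eta'',r}$-contraction on $\mathcal X_{\eta''}$ by the same kernel computation. Since $e^{-\eta''s}e^{\eta_r s}\le e^{-(\eta_r-\eta'')T}$ for $s\le-T$, you get
\[
\|W(0)\|_{X_a}\le\|W\|_{\mathcal X_{\eta''}}
\le\frac{C}{1-q_{\eps,\eta'',r}}\Bigl(\sup_{-T\le s\le0}\bigl(\|\mathcal E_A^\psi(s)\|_{X_a}+\|\mathcal E_B^\psi(s)\|_{X_\theta}\bigr)+\|\psi\|_{X_\theta}\,e^{-(\eta_r-\eta'')T}\Bigr),
\]
with $C$ independent of $T$ and $\psi$. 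Choose $T$ first, then take $\|\psi\|$ small via Lemma~\ref{lem:sec4-finite-window-remainders}; this gives $\|W(0)\|_{X_a}=o(\|\psi\|_{X_\theta})$.

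This is the standard two-weight argument, and it buys you three things:
\begin{itemize}
\item It proves differentiability of $\theta_0\mapsto z^{\theta_0}$ into $\mathcal X_{\eta''}$. This is consistent with the paper's remark that differentiability into $\mathcal X_{\eta_r}$ itself may fail.
\item It makes a separate tail lemma at $t=0$ unnecessary.
\item Applied to the difference of two variational systems, the identical estimate gives the operator-norm continuity of Step 3. There the forcing $(\DD A_r(z^{\theta_n})-\DD A_r(z^{\theta_0}))[U,V]$ is uniformly small on windows and $O(e^{\eta_r s})$ on the tail.
\end{itemize}
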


\begin{proof}

The point is to differentiate the evaluated fast component at \(t=0\), not the full weighted
half-line superposition map \(\xi\mapsto\bigl(t\mapsto\int_{-\infty}^t e^{-\varepsilon^{-1}(t-s)}A_r(\xi(s),\theta(s))\,\dd s\bigr)\). The latter need not be Fr\'echet differentiable as a map
from \(C_{\eta_r,X_a}\times C_{\eta_r,X_\theta}\) into itself: a perturbation
\(w\in C_{\eta_r,X_\theta}\) only satisfies \(\|w(s)\|_{X_\theta}\lesssim e^{\eta_r s}\) as
\(s\to-\infty\), which is small in the weighted norm but unbounded pointwise; the candidate
linearisation \(\DD A_r(z^0(s))[U(s),V(s)]\) can therefore have, on the same scale, growth
that the kernel \(e^{-\varepsilon^{-1}(t-s)}\) does not damp uniformly in \(t\le 0\). The
Taylor remainder of the half-line superposition then fails to be \(o(\|w\|)\) in operator norm.

Evaluation at \(t=0\) escapes this obstruction because the kernel
\(s\mapsto e^{s/\varepsilon}\) decays so rapidly as \(s\to-\infty\) that, when paired with the
\(e^{\eta_r s}\)-controlled growth of the perturbation, the contribution of the remote past is
absorbed into a quantitative tail bound (Lemma~\ref{lem:sec4-tail-evaluation}). The
differentiability proof therefore proceeds by combining a finite-window Taylor remainder on
\([-T,0]\) (Lemma~\ref{lem:sec4-finite-window-remainders}) with the fast-kernel tail estimate on
\((-\infty,-T)\), letting \(\|\psi\|_{X_\theta}\to 0\) at fixed \(T\) and then \(T\to\infty\); the
two limits commute precisely because the fast kernel decay is independent of the weighted-space
modulus of \(\psi\).

Let \(R^\psi(t):=z^\psi(t)-z^0(t)-
(U_{\theta_0}^{\psi}(t),V_{\theta_0}^{\psi}(t))\). Subtracting the fixed-point equations for
\(z^\psi\) and \(z^0\), and then subtracting the variational equations, gives on every finite
window \([-T,0]\) a linear Volterra system for \(R^\psi\). Its forcing terms are the Taylor
remainders \(\mathcal E_A^\psi,\mathcal E_B^\psi\), plus the fast-component tail from
\((-\infty,-T)\). The same kernel estimates as in Lemma~\ref{lem:lyapunov_perron} give
\[
\begin{aligned}
\|R^\psi_1(0)\|_{X_a}
&\le
\frac{1}{1-q_*}
\left[
\frac{\eps}{1+\eps\eta_r}
\sup_{-T\le s\le0}\|\mathcal E_A^\psi(s)\|_{X_a}
\right.\\
&\qquad\left.
+\frac{1}{|\eta_r|}
\sup_{-T\le s\le0}\|\mathcal E_B^\psi(s)\|_{X_\theta}
+K_r\|\psi\|_{X_\theta}e^{-(\eps^{-1}+\eta_r)T}
\right],
\end{aligned}
\]
where \(R^\psi_1\) denotes the \(X_a\)-component. Dividing by \(\|\psi\|_{X_\theta}\), applying
Lemma~\ref{lem:sec4-finite-window-remainders}, first letting \(\|\psi\|_{X_\theta}\to0\) for
fixed \(T\), and then letting \(T\to\infty\), yields
\[
\frac{\|\Theta_\eps(\theta_0+\psi)-\Theta_\eps(\theta_0)
-U_{\theta_0}^{\psi}(0)\|_{X_a}}{\|\psi\|_{X_\theta}}
\longrightarrow0 .
\]
Thus \(\Theta_\eps\) is Frechet differentiable with the stated derivative.

It remains to record continuity of the derivative. Let \(\theta_n\to\theta_0\) in \(X_\theta\).
The anchor estimate \eqref{eq:sec4-anchor-lip} gives uniform convergence
\(z^{\theta_n}\to z^{\theta_0}\) on each finite window. By
Lemma~\ref{lem:sec4-Ar-Br-C1}, the coefficient maps
\(\DD A_r(z^{\theta_n}(\cdot))\) and \(\DD B_r(z^{\theta_n}(\cdot))\) converge uniformly on that
window in operator norm. Subtracting the two variational equations and using the same
contraction estimate gives convergence of
\(\phi\mapsto U_{\theta_n}^{\phi}(0)\) to \(\phi\mapsto U_{\theta_0}^{\phi}(0)\) in
\(\mathcal B(X_\theta,X_a)\); the remote fast tail is again controlled by
Lemma~\ref{lem:sec4-tail-evaluation}, uniformly for \(\|\phi\|_{X_\theta}\le1\). Hence
\(\Theta_\eps\in C^1(X_\theta,X_a)\).
\end{proof}

\begin{remark}[{On the differentiability estimate}]
The bound on \(R^\psi_1(0)\) above is obtained from the same weighted Volterra
estimates as in Lemma~\ref{lem:lyapunov_perron}. On the finite window \([-T,0]\) the forcing terms
\(\mathcal E_A^\psi,\mathcal E_B^\psi\) are \(o(\|\psi\|_{X_\theta})\) by
Lemma~\ref{lem:sec4-finite-window-remainders}, while the contribution of \((-\infty,-T)\) is
controlled by the fast-kernel tail bound of Lemma~\ref{lem:sec4-tail-evaluation}; the weighted
contraction factor \(1/(1-q_*)\) and the kernel factors \(\varepsilon/(1+\varepsilon\eta_r)\) and
\(1/|\eta_r|\) are exactly those of the fixed-point argument in
Lemma~\ref{lem:lyapunov_perron}. Letting \(\|\psi\|_{X_\theta}\to0\) at fixed \(T\) and then
\(T\to\infty\) gives the stated limit. The remaining bookkeeping---writing out the linear Volterra
system for \(R^\psi\) on each finite window and summing the two contributions---is routine and is
omitted.
\end{remark}

\begin{lemma}[\(O(\varepsilon)\) derivative closeness]
\label{lem:sec4-derivative-closeness}
For every \(\theta_0,\phi\in X_\theta\),
\[
\|U_{\theta_0}^{\phi}(0)\|_{X_a}
\le
\frac{2C_3(r)}{1-q_*}\,\eps\,\|\phi\|_{X_\theta}.
\]
Consequently
\[
\sup_{\theta_0\in X_\theta}
\|\DD\Theta_\eps(\theta_0)\|_{\mathcal B(X_\theta,X_a)}
\le
\frac{2C_3(r)}{1-q_*}\,\eps .
\]
\end{lemma}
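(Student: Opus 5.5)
The plan is to read off the \(\varepsilon\)-scaling directly from the fast component of the variational Lyapunov--Perron system \eqref{eq:sec4-var-U}. Lemma~\ref{lem:sec4-variational-lp} gives a solution \((U_{\theta_0}^{\phi},V_{\theta_0}^{\phi})\in\mathcal X_{\eta_r}\) with weighted norm at most \(\|\phi\|_{X_\theta}/(1-q_*)\), uniformly in \(\theta_0\) and in \(\varepsilon\in(0,\varepsilon_{0,r}]\). Evaluating \eqref{eq:sec4-var-U} at \(t=0\) gives
\[
U_{\theta_0}^{\phi}(0)=-\int_{-\infty}^{0}e^{s/\varepsilon}\,\DD A_r(z^{\theta_0}(s))[U_{\theta_0}^{\phi}(s),V_{\theta_0}^{\phi}(s)]\,\dd s .
\]
The entire estimate should come from this single formula.

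First, I bound the integrand. Lemma~\ref{lem:sec4-Ar-Br-C1} gives \(\|\DD A_r(z)\|_{\mathcal B(X_a\times X_\theta,X_a)}\le C_3(r)\) uniformly in \(z\). For \(s\le0\),
\[
\|U(s)\|_{X_a}+\|V(s)\|_{X_\theta}\le e^{\eta_r s}\,\|(U,V)\|_{\mathcal X_{\eta_r}} .
\]
Hence the integrand has norm at most \(C_3(r)\,e^{\eta_r s}\|\phi\|_{X_\theta}/(1-q_*)\).

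Second, I integrate against the kernel:
\[
\int_{-\infty}^0 e^{(\varepsilon^{-1}+\eta_r)s}\,\dd s=\frac{\varepsilon}{1+\varepsilon\eta_r}\le 2\varepsilon .
\]
The inequality uses the standing normalisation \(1+\varepsilon\eta_r\ge\tfrac12\) on \((0,\varepsilon_{0,r}]\). Combining the two steps gives
\[
\|U_{\theta_0}^{\phi}(0)\|_{X_a}\le \frac{2C_3(r)}{1-q_*}\,\varepsilon\,\|\phi\|_{X_\theta},
\]
which is the first claim.

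For the operator-norm consequence, I invoke Lemma~\ref{lem:sec4-evaluation-C1}, which gives \(\DD\Theta_\varepsilon(\theta_0)\phi=U_{\theta_0}^{\phi}(0)\). Taking the supremum over \(\|\phi\|_{X_\theta}\le1\) and then over \(\theta_0\) gives the stated bound, since none of the constants depend on \(\theta_0\).

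The only point needing care is the uniformity of the variational bound \eqref{eq:sec4-variational-bound} in \(\theta_0\) and \(\varepsilon\). I would check that it uses only \(q_*\) from \eqref{eq:sec4-qstar}, not the \(\varepsilon\)-dependent \(q_{\varepsilon,\eta_r,r}\). This holds by the way \(q_*\) was defined as a supremum over the admissible range, so I expect no genuine obstacle.
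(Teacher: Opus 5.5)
Your proposal is correct and matches the paper's proof essentially line for line. You evaluate \eqref{eq:sec4-var-U} at \(t=0\) and bound the integrand by \(C_3(r)e^{\eta_r s}\|\phi\|_{X_\theta}/(1-q_*)\) via \eqref{eq:sec4-variational-bound}; you then integrate against \(e^{s/\varepsilon}\) using \(1+\varepsilon\eta_r\ge\tfrac12\), and pass to the operator norm through \(\DD\Theta_\varepsilon(\theta_0)\phi=U_{\theta_0}^{\phi}(0)\) from Lemma~\ref{lem:sec4-evaluation-C1}.
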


\begin{proof}
Evaluating \eqref{eq:sec4-var-U} at \(t=0\) and using
\eqref{eq:sec4-variational-bound} gives
\[
\|U_{\theta_0}^{\phi}(0)\|_{X_a}
\le
C_3(r)\int_{-\infty}^{0}e^{s/\eps}e^{\eta_r s}\,\dd s\,
\frac{\|\phi\|_{X_\theta}}{1-q_*}
=
C_3(r)\frac{\eps}{1+\eps\eta_r}
\frac{\|\phi\|_{X_\theta}}{1-q_*}.
\]
The assumption \(1+\eps\eta_r\ge1/2\) gives the displayed estimate.
\end{proof}

\section*{Acknowledgements}
This work was supported by the European Union's Horizon Europe Marie Sk\l odowska-Curie Actions under the ``BeyondTheEdge: Higher-Order Networks and Dynamics'' project (Grant Agreement No.~101120085).

\bibliographystyle{siam}
\bibliography{main_bibliography}

\end{document}